\let\expandafter\oldproof\csname\string\proof\endcsname
\let\oldendproof\endproof
\renewenvironment{proof}[1][\proofname]{%
	\oldproof[\bf #1]%
}{\oldendproof}
\theoremstyle{plain}
\newtheorem{theorem}{Theorem}[section]
\newtheorem{lemma}[theorem]{Lemma}
\newtheorem{claim}[theorem]{Claim}
\newcommand{\poly}{\text{poly}}
\definecolor{RED}{rgb}{1,0,0}\definecolor{BLUE}{rgb}{0,0,1} 
\author{
	Lior Gishboliner\thanks{ETH Zurich, \emph{e-mail}: \textbf{\{lior.gishboliner,istvan.tomon\}@math.ethz.ch}}
	\and
	Istv\'an Tomon\footnotemark[1]
}
\title{On $3$-graphs with no four vertices spanning exactly two edges}
\begin{document}
	\maketitle
	\sloppy
	\begin{abstract}
		Let $D_2$ denote the $3$-uniform hypergraph with $4$ vertices and $2$ edges.
		Answering a question of Alon and Shapira, we prove an induced removal lemma for $D_2$ having polynomial bounds. We also prove an Erd\H{o}s-Hajnal-type result: every induced $D_2$-free hypergraph on $n$ vertices contains a clique or an independent set of size $n^{c}$ for some absolute constant $c > 0$. In the case of both problems, $D_2$ is the only nontrivial $k$-uniform hypergraph with $k\geq 3$ which admits a polynomial bound.
	\end{abstract}
	\section{Introduction}
	The famous triangle removal lemma of Ruzsa and Szemer\'edi \cite{RuzsaSz} started a new chapter in combinatorics. It states that if an $n$-vertex graph $G$ contains at most $\delta(\varepsilon) n^3$ triangles, then $G$ can be made triangle-free by deleting at most $\varepsilon n^2$ edges. A similar statement holds 
	when the triangle is replaced with an arbitrary graph $F$. 
	Alon, Fischer, Krivelevich and Szegedy \cite{AFKS} proved an analogous result for the much more challenging setting of induced subgraphs. This result, known as the induced removal lemma, states that if $G$ contains at most $\delta_F(\varepsilon) n^{v(F)}$ induced copies of a graph $F$, then $G$ can be made induced $F$-free by adding/deleting at most $\varepsilon n^2$ edges. A generalization to arbitrary hereditary graph properties was later obtained by Alon and Shapira \cite{AS_hereditary}. For more on graph removal lemmas, we refer the reader to \cite{CF}. 
	
	One of the major developments in extremal combinatorics in the last twenty years was the establishment of a hypergraph version of Szemer\'edi's regularity lemma, which made it possible to extend the results mentioned in the previous paragraph to $k$-uniform hypergraphs.
	A hypergraph analogue of the graph removal lemma was proved by Gowers \cite{Gowers06,Gowers07} and independently by Nagle, R\"odl, Schacht and Skokan \cite{NRS,Rodl_Skokan}. An analogue of the induced removal lemma, and more generally the Alon-Shapira theorem, was then obtained by Avart, R\"odl and Schacht \cite{ARS} for $3$-uniform hypergraphs and by R\"odl and Schacht \cite{Rodl_Schacht} in the general case. As an example, for a $k$-uniform hypergraph $F$, the induced $F$-removal lemma states that if a $k$-uniform hypergraph $H$ contains at most $\delta_F(\varepsilon) n^{v(F)}$ induced copies of $F$, then $H$ can be made induced $F$-free by adding/deleting at most $\varepsilon n^k$ edges.  
	
	The proofs of all of the above results rely on the regularity lemma of Szemer\'edi \cite{Szemeredi} or generalizations thereof. Consequently, the bounds on $\delta(\varepsilon)$ supplied by these proofs are quite poor. Even in the case of the triangle removal lemma, the best known bound, due to Fox \cite{Fox}, is $1/\delta \leq \text{tower}(O(\log 1/\varepsilon))$, where $\text{tower}(x)$ is a tower of $x$ exponents. Still, in some cases better bounds are known. This raised the natural question of characterizing the cases where the removal lemma has polynomial bounds, namely, when $1/\delta$ can be taken as a polynomial function of $1/\varepsilon$. For example, in the case of graphs, Alon \cite{Alon} showed that the $F$-removal lemma has polynomial bounds if and only if $F$ is bipartite. Results of Alon and Shapira \cite{AS_induced} and Alon and Fox \cite{AF} gave a nearly complete characterization for the induced-$F$-removal lemma. See also \cite{GS_C4,GS_poly} for related results.
	
	The question of polynomial removal lemmas for hypergraphs was studied by Alon and Shapira in \cite{AS_hypergraphs}. They showed that for $k \geq 3$ and for a $k$-uniform hypergraph $F$, the induced-$F$-removal lemma {\em does not} have polynomial bounds, unless $|V(F)| = k$ (the trivial case), or $k = 3$ and $F$ is the $3$-uniform hypergraph with $4$ vertices and $2$ edges. We will denote this hypergraph by $D_2$. Consequently, Alon and Shapira \cite{AS_hypergraphs} asked whether the induced-$D_2$-removal lemma has polynomial bounds. Our first result answers this question in the affirmative:

	\begin{theorem}\label{thm:removal}
		For every $\varepsilon > 0$ there are $\delta = \delta(\varepsilon) = \poly(\varepsilon) > 0$ and $n_0 = n_0(\varepsilon) = \poly(1/\varepsilon)$ such that the following holds. Let $H$ be a $3$-uniform hypergraph on $n \geq n_0$ vertices which contains less than $\delta n^4$ induced copies of $D_2$. Then $H$ can be made induced $D_2$-free by adding/deleting at most $\varepsilon n^3$ edges. 
	\end{theorem}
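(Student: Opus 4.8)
The plan is to reduce Theorem~\ref{thm:removal} to a robust structure theorem for induced $D_2$-free $3$-graphs, proved with polynomial dependencies, and then to remove all induced copies of $D_2$ by an iterative peeling argument.

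I would first determine the structure of induced $D_2$-free $3$-graphs. Call a $3$-graph on $V$ \emph{peelable} if some vertex lies in all triples, or in no triple (a complete or empty star); call it \emph{linear} if any two edges meet in at most one vertex, and \emph{co-linear} if its complement is linear. The target statement is: every induced $D_2$-free $3$-graph is peelable, linear, or co-linear. Since removing a vertex preserves being induced $D_2$-free, peeling off vertices with complete or empty stars as long as possible then yields an ordered partition $V=V_1\cup\dots\cup V_k$ in which, for each $i<k$, all triples meeting $V_i$ but avoiding $V_1\cup\dots\cup V_{i-1}$ are either all edges or all non-edges, while $V_k$ induces a linear or co-linear sub-hypergraph. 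The converse — every $3$-graph of this form is induced $D_2$-free — is a short case analysis on a $4$-set $W$ via its first nonempty part $V_i$: if $|W\cap V_i|\ge2$ all four triples of $W$ are homogeneous; if $|W\cap V_i|=1$ the three triples through that single vertex are homogeneous and the fourth is free, giving $0,1,3$ or $4$ edges; and if $W\subseteq V_k$ then linearity forces $\le1$ edge and co-linearity forces $\ge3$. The forward direction is the combinatorial core: starting from an edge and a non-edge meeting in two vertices, the forbidden configuration forces the local rule "if exactly one of $abc,abd$ is an edge, then $acd$ and $bcd$ are both edges or both non-edges", and one must grind this into the global ordered partition.

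For the removal lemma I need the quantitative form: there is an absolute constant $C$ such that if a $3$-graph $G$ on $m$ vertices contains fewer than $\eta^{C}m^{4}$ induced copies of $D_2$, then either (a) some vertex of $G$ has a star disagreeing with a complete star, or with an empty star, on at most $\eta m^{2}$ triples, or (b) $G$ becomes linear after changing at most $\eta m^{3}$ triples, or (c) the same with "co-linear". I expect this to be the main obstacle. The proof should shadow the exact argument, made stable by counting: if no vertex of $G$ has an almost-homogeneous star and $G$ is far from both linear and co-linear, one shows by a double-counting argument that there are polynomially many (in $m$) quadruples $(a,b,\{c,d\})$ for which exactly one of $abc,abd$ is an edge and exactly one of $acd,bcd$ is an edge; each such quadruple spans an induced $D_2$ on $\{a,b,c,d\}$, contradicting the hypothesis. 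The real difficulty is keeping the number of forced induced copies of $D_2$ \emph{polynomially} — rather than exponentially — large in $1/\eta$ while allowing an $\eta$-fraction of exceptions throughout.

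Granting the robust theorem, Theorem~\ref{thm:removal} follows by a peeling loop. Fix $\eta$ to be a small constant multiple of $\varepsilon$, take $\delta=\poly(\varepsilon)$ small enough that $\delta n^{4}<\eta^{C}m^{4}$ whenever $m\ge\varepsilon n/3$ (possible since then $n^{4}\le(3/\varepsilon)^{4}m^{4}$), and let $n_0=\poly(1/\varepsilon)$ be large enough for the counting estimates. Given $H$ with fewer than $\delta n^{4}$ induced copies of $D_2$: while the current vertex set has size $m\ge\varepsilon n/3$, apply the robust theorem to the current (still $D_2$-sparse) induced sub-hypergraph — in case (a), change the $\le\eta m^{2}$ disagreeing triples through $v$ so that its star becomes complete or empty, delete $v$, and continue; in case (b) or (c), replace the current sub-hypergraph by the nearby linear (resp.\ co-linear) $3$-graph at a cost of $\le\eta m^{3}$ triples and stop. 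If the loop reaches $m<\varepsilon n/3$ without stopping, delete all remaining vertices, destroying at most $(\varepsilon n/3)\binom{n}{2}<\varepsilon n^{3}/6$ triples. The peeling steps change at most $\sum_{m\le n}\eta m^{2}<\eta n^{3}/2$ triples in total, the single linear/co-linear replacement at most $\eta n^{3}$, and the resulting $3$-graph has the ordered-partition form above, hence is induced $D_2$-free. For $\eta$ a suitably small multiple of $\varepsilon$ the total number of added or deleted triples is below $\varepsilon n^{3}$, which proves the theorem.
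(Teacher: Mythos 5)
Your reduction hinges on a structure theorem that is false. Take $V=X\cup Y$ with $|X|=|Y|=m/2$ and let the edges be exactly the triples lying entirely inside $X$ or entirely inside $Y$ (two vertex-disjoint cliques). Every $4$-set spans $0,1$ or $4$ edges (split $4{-}0$, $3{-}1$ or $2{-}2$ across $X,Y$), so this $3$-graph is induced $D_2$-free and has zero induced copies of $D_2$. Yet no vertex has a complete or empty star: the link of $x\in X$ is a clique on $X\setminus\{x\}$ together with the isolated vertices of $Y$, so it disagrees with both the complete and the empty star on $\Omega(m^2)$ pairs, and nothing can be peeled. Moreover the graph is $\Omega(m^3)$-far from linear (inside each clique there are $\Theta(m^3)$ edges pairwise sharing two vertices, almost all of which must be deleted) and $\Omega(m^3)$-far from co-linear (the complement contains all $\Theta(m^3)$ crossing triples, which massively violate linearity). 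Hence both your exact claim (``every induced $D_2$-free $3$-graph is peelable, linear, or co-linear'') and the robust alternative (a)/(b)/(c) that your peeling loop relies on fail already for $\eta$ a small constant, so the loop has no case to enter and the whole reduction collapses. The underlying problem is that splitting off single vertices with homogeneous stars is far too weak a decomposition step for this class.

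The structure that does characterize (closeness to) induced $D_2$-freeness is a recursive bipartition into two \emph{large} parts $X\cup Y$ that are homogeneous in the hypergraph sense used in the paper, namely $d(X,X,Y)$ and $d(Y,Y,X)$ both near $1$ or both near $0$ (the ``cohypergraph'' structure); note that the counterexample above is exactly of this form with both densities equal to $0$. The paper produces such a bipartition of linear size (Lemma \ref{lem:main}) by a quite different route: link graphs of an induced $D_2$-free graph are cographs (Lemma \ref{lem:cograph_links}), so one cleans a single link to a cograph using the polynomial induced-$P_4$ removal lemma of Alon and Fox, partitions that cograph (Lemmas \ref{lem:cograph_partition_main} and \ref{lem:weighted_cograph_partition}), and transfers the partition back to the hypergraph through counting arguments (Lemmas \ref{lem:pairs_to_triple} and \ref{lem:homogeneous}); iterating and then homogenizing each pair yields a cohypergraph after at most $\varepsilon n^3$ edge changes. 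If you wish to salvage your plan, the peeling step must split off a linear-sized nearly homogeneous part rather than a single vertex, and proving that such a part exists with polynomial dependence on the parameters is precisely where the cograph machinery is needed.
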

	
	Our second result is an Erd\H{o}s-Hajnal-type theorem for induced $D_2$-free graphs. The famous Erd\H{o}s-Hajnal conjecture, raised in \cite{EH}, states that for every graph $F$, every induced $F$-free graph $G$ on $n$ vertices should contain a {\em homogeneous set}, i.e. a clique or independent set, of size at least $n^c$, where $c = c(F) > 0$. If true, this would show that in a strong sense, graphs belonging to hereditary properties have much larger homogeneous sets than general graphs, which are only guaranteed to have homogeneous sets of size $\Theta(\log n)$. See the survey of Chudnovsky \cite{C14} as a (somewhat outdated) general reference.

	The analogue of the Erd\H{o}s-Hajnal problem for $3$-uniform hypergraphs was studied by Conlon, Fox and Sudakov \cite{CFS}. The situation here is somewhat different to graphs, since it is not even known what size of homogeneous sets is guaranteed to exist in every $n$-vertex $3$-uniform hypergraph. By the well known theorem of Erd\H{o}s and Rado \cite{ER}, every $3$-uniform hypergraph contains a homogeneous set of size $\Omega(\log \log n)$, and there exist $3$-uniform hypergraphs with no homogeneous set of size $C\sqrt{\log n}$. Finding the correct order of magnitude is a major open problem in Ramsey theory. 
	
	Given the above, a reasonable candidate for an Erd\H{o}s-Hajnal-type conjecture for $3$-uniform hypergraphs would be that every induced $F$-free $n$-vertex hypergraph contains a homogeneous set of size $\omega(\log \log n)$, however, this is believed to be false in general. Some results in this vein were obtained in \cite{AST,CFS}. A related natural question is for which hypergraphs $F$, an Erd\H{o}s-Hajnal result holds with {\em polynomial bounds}. 
	In the following theorem, we show that this is the case for $F = D_2$, namely that induced $D_2$-free hypergraphs have homogeneous sets of polynomial size.  
	
	\begin{theorem}\label{thm:EH}
		There exists a constant $c>0$ such that the following holds. Every induced $D_2$-free $3$-uniform hypergraph on $n$ vertices contains a clique or an independent set of size at least $n^{c}$. 
	\end{theorem}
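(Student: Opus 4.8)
The plan is to combine an exact structural description of $D_2$-free $3$-graphs with a short recursive counting argument. Call a set $S\subseteq V(H)$ \emph{homogeneous} if $H[S]$ is complete (a clique) or edgeless (an independent set), and write $\alpha(H)$ and $\omega(H)$ for the sizes of the largest independent set and the largest clique of $H$; thus Theorem~\ref{thm:EH} asks for $\max(\alpha(H),\omega(H))\ge n^c$. Given $3$-graphs $H_1,H_2$ on disjoint vertex sets, let $H_1\oplus H_2$ be the $3$-graph on $V(H_1)\cup V(H_2)$ whose edges are those of $H_1$ and of $H_2$ together with \emph{every} triple meeting both $V(H_1)$ and $V(H_2)$; let $H_1\sqcup H_2$ be defined the same way but with all such crossing triples being \emph{non-edges}. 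A routine check of the four triples spanned by any four vertices shows that $H_1\oplus H_2$ and $H_1\sqcup H_2$ are $D_2$-free whenever $H_1,H_2$ are (the two operations are complementary, so it suffices to check one). Recall also that a $3$-graph is \emph{linear} if every pair of vertices lies in at most one edge; since two distinct edges inside a set of four vertices must share a pair, every linear $3$-graph is $D_2$-free, and hence so is every \emph{co-linear} $3$-graph, i.e.\ one whose complement is linear.

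The heart of the argument is the following structural statement, which I would establish first: every $D_2$-free $3$-graph $H$ on $n\ge 2$ vertices is of the form $H_1\oplus H_2$ or $H_1\sqcup H_2$ for $D_2$-free $H_1,H_2$ on fewer vertices, \emph{or} $H$ is linear, \emph{or} $H$ is co-linear. To prove this I would fix a vertex $v$ and examine its link graph $L_v$ on $W:=V(H)\setminus\{v\}$ together with $H':=H[W]$. Applying $D_2$-freeness to $\{v\}\cup T$ for a triple $T\subseteq W$ gives: if $L_v$ spans exactly one edge on $T$ then $T\notin H'$, and if it spans exactly two edges on $T$ then $T\in H'$. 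Hence $H'$ is completely determined on every triple on which $L_v$ induces neither a clique nor an independent set. If $L_v$ is edgeless then $H=\{v\}\sqcup H'$, and if $L_v$ is complete then $H=\{v\}\oplus H'$; otherwise $L_v$ has both an edge and a non-edge, and the plan is to show that this forces so much rigidity — by propagating the above determination of $H'$ along edges of $L_v$, and by intersecting the constraints coming from different choices of $v$ — that either a global $\oplus$ or $\sqcup$ split must be present, or $H$ (or its complement) must be linear. Pinning down exactly which links $L_v$ can occur, and ruling out "mixed" configurations that are neither decomposable nor (co-)linear, is the step I expect to be the main obstacle.

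Granting the structural statement, Theorem~\ref{thm:EH} follows quickly. First, a linear $3$-graph on $m$ vertices has at most $\binom{m}{2}/3<m^2/6$ edges, so a standard probabilistic deletion argument (keep each vertex independently with probability $p\sim m^{-1/2}$ and then delete one vertex from each surviving edge) produces an independent set of size at least $c_0\sqrt m$; dually, a co-linear $3$-graph on $m$ vertices contains a clique of size at least $c_0\sqrt m$. I then claim that $n\le C\,(\alpha(H)\omega(H))^2$ for every $D_2$-free $3$-graph $H$ on $n$ vertices, where $C$ is an absolute constant, and prove this by induction on $n$ using the structural statement. If $H$ is linear then $n\le C\,\alpha(H)^2\le C\,(\alpha(H)\omega(H))^2$ (using $\omega(H)\ge 1$), and similarly if $H$ is co-linear; a single vertex is linear, which handles the base case. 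If $H=H_1\oplus H_2$, then a clique of $H_1$ together with a clique of $H_2$ is a clique of $H$, so $\omega(H_1)+\omega(H_2)\le\omega(H)$, while any independent set of $H_i$ remains independent in $H$, so $\alpha(H_i)\le\alpha(H)$; hence, writing $\alpha=\alpha(H)$,
\begin{align*}
n=|V(H_1)|+|V(H_2)| &\le C(\alpha(H_1)\omega(H_1))^2+C(\alpha(H_2)\omega(H_2))^2 \\
&\le C\,\alpha^2\bigl(\omega(H_1)^2+\omega(H_2)^2\bigr)\le C\,\alpha^2\bigl(\omega(H_1)+\omega(H_2)\bigr)^2\le C\,(\alpha(H)\omega(H))^2,
\end{align*}
and the case $H=H_1\sqcup H_2$ is identical with the roles of $\alpha$ and $\omega$ exchanged. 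This proves the claim, and therefore $\max(\alpha(H),\omega(H))\ge(n/C)^{1/4}\ge n^{c}$ for a suitable absolute constant $c>0$ (after shrinking $c$ to absorb small values of $n$), which is Theorem~\ref{thm:EH}.
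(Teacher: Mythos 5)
The heart of your argument --- the trichotomy that every $D_2$-free $3$-graph is an $\oplus$- or $\sqcup$-join of two smaller $D_2$-free $3$-graphs, or is linear, or is co-linear --- is false, and this is exactly the step you yourself flag as unproven. Here is a counterexample on $8$ vertices. Let $V=A\cup B$ with $A=\{a_1,a_2,a_3,a_4\}$ and $B=\{b_1,b_2,b_3,b_4\}$. Declare every triple inside $A$ an edge, every triple inside $B$ a non-edge, every triple with one vertex in $A$ and two in $B$ an edge, and every triple with two vertices in $A$ and one in $B$ an edge, except for the two triples $\{a_1,a_2,b_1\}$ and $\{a_3,a_4,b_2\}$, which are non-edges. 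Every $4$-set then spans $0$, $3$ or $4$ edges: $4$-sets inside $A$ span $4$, inside $B$ span $0$; a $4$-set with three or two vertices of $A$ contains at most one of the two exceptional non-edges (they use disjoint pairs of $A$) and no $B$-triple or at most one, giving $3$ or $4$ edges; a $4$-set with one vertex of $A$ has the $B$-triple as its only non-edge, giving $3$. So $H$ is induced $D_2$-free. It is not linear ($\{a_1,a_2,a_3\}$ and $\{a_1,a_2,a_4\}$ are edges sharing two vertices), not co-linear ($\{b_1,b_2,b_3\}$ and $\{b_1,b_2,b_4\}$ are non-edges sharing two vertices), and it has no join decomposition: an $\oplus$-split would require every non-edge to lie inside one part, but the $B$-triples force all of $B$ into a single part and then $\{a_1,a_2,b_1\}$ and $\{a_3,a_4,b_2\}$ drag in all of $A$; a $\sqcup$-split would require every edge to lie inside one part, which fails because the edges with one vertex in $A$ and two in $B$ already tie every vertex of $A$ to all of $B$. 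The construction scales to arbitrarily large $A$ and $B$, so no exact decomposition theorem of the proposed kind exists, and your induction establishing $n\le C(\alpha\omega)^2$ has nothing to recurse on. (The peripheral pieces of your write-up are fine: $\oplus$ and $\sqcup$ do preserve $D_2$-freeness, linear $3$-graphs are $D_2$-free, the probabilistic deletion bound is the paper's Lemma \ref{lem:probabilistic_independent_set}, and the observation that the link determines $H$ on non-homogeneous triples of the link is in the spirit of Lemma \ref{lem:cograph_links}.)

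This failure of exact structure is precisely why the paper works with approximate structure instead. Its main lemma (Lemma \ref{lem:main}) shows that if some vertex has link density bounded away from $0$ and $1$ (and, here automatically, there are few induced $D_2$'s), then $V(H)$ splits into two linear-sized parts forming an $\varepsilon$-homogeneous pair plus a small leftover; proving even this weaker statement requires that links are cographs, the polynomial cograph removal lemma of Alon--Fox, and the cograph partition lemmas of Section \ref{sect:lemmas}. Theorem \ref{thm:EH} then follows by iterating this partition with $\varepsilon=n^{-\Omega(1)}$, tracking a cograph on the parts, and finishing with Lemma \ref{lem:pairs_to_triple} and the deletion argument. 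If you want to salvage your plan, you would need to replace the exact trichotomy by such an approximate decomposition (note that in my example the non-homogeneous ``noise'' between $A$ and $B$ is sparse --- this is the phenomenon the paper's $\varepsilon$-homogeneity is designed to capture), at which point you are essentially reproving the paper's main lemma.
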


	Theorem \ref{thm:EH} in fact allows us to characterize the $k$-uniform hypergraphs $F$ for $k\geq 3$ such that induced $F$-free $k$-uniform hypergraphs have homogeneous sets of polynomial size. These are exactly the hypergraphs $F$ with $|V(F)| = k$ (the trivial cases) and $F = D_2$.
	
	\begin{claim}
	    Let $k\geq 3$ and let $F$ be a $k$-uniform hypergraph. If $|V(F)|\geq k+1$ and $F\neq D_2$, then for every $n$ there exists a $k$-uniform hypergraph $H$ on $n$ vertices such that every homogeneous set of $H$ has size at most $(\log n)^{O(1)}$. 
	\end{claim}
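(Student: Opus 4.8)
The plan is to reduce the claim, $F$ by $F$, to a handful of explicit constructions, using two elementary monotonicity principles. First, if $F'$ is an induced subhypergraph of $F$, then every induced-$F'$-free $k$-graph is also induced-$F$-free, so it suffices to produce a good $H$ for $F'$. Second, $H$ is induced-$F$-free if and only if its complement $\overline H$ is induced-$\overline F$-free, and complementation exchanges cliques and independent sets; hence the claim for $F$ is equivalent to the claim for $\overline F$. Combining these, we may freely replace $F$ by any $(k+1)$-vertex induced subhypergraph of $F$, or by the complement of such a subhypergraph within its $k+1$ vertices.

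With these reductions in hand, the bulk of the cases succumb to two constructions. (1) \emph{Ramsey.} If some $(k+1)$-subset of $F$ spans no edge, then every $k$-graph with independence number at most $k$ is induced-$F$-free; by the (super-polynomial) lower bounds on the off-diagonal hypergraph Ramsey numbers $r_k(k+1,t)$ (e.g. $r_3(4,t)\ge 2^{\Omega(t)}$, cf. \cite{ER,CFS}) there are $n$-vertex $k$-graphs with $\alpha\le k$ and $\omega=O(\log n)$, hence with homogeneous sets of size $O(\log n)$; the all-edges case follows by complementation. (2) \emph{Parity.} Let $G_1$ be a uniformly random $(k-1)$-uniform hypergraph on $[n]$ and set $H=\bigl\{\,T\in\binom{[n]}{k}\ :\ \bigl|\{e\in\binom{T}{k-1}:e\in G_1\}\bigr|\ \text{is odd}\,\bigr\}$. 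Counting incidences modulo $2$ between the $k$-subsets and $(k-1)$-subsets of any fixed $(k+1)$-set shows that every $(k+1)$-set spans an \emph{even} number of edges of $H$; thus $H$ is induced-$F'$-free for every $(k+1)$-vertex $F'$ with an odd number of edges (for $k=3$ this covers $D_1$ and $D_3$). A standard linear-algebra computation with the coboundary maps of the simplex over $\mathbb F_2$ shows that, for a fixed set $T$ of $m$ vertices, the probability that $G_1|_T$ makes $T$ a clique (or an independent set) of $H$ is at most $2^{-\Omega(m^{2})}$, so a union bound gives $\alpha(H),\omega(H)=O(\log n)$ w.h.p. Since for $k$ even the integers $j$ and $(k+1)-j$ always have opposite parity, construction (2) together with the reductions already settles the claim for all even $k$; the same argument handles every odd $k$ and every $F$ some $(k+1)$-subset of which has an odd (or ``co-odd'') number of edges.

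What remains is: $k$ odd, and every $(k+1)$-subset of $F$ spanning an even number of edges strictly between $0$ and $k+1$. A short case check then locates inside $F$ an induced copy of one of finitely many ``irreducible'' hypergraphs. For $k=3$ such a $(k+1)$-subset is precisely a copy of $D_2$; since $v(F)=4$ would give $F=D_2$, which is excluded, we have $v(F)\ge 5$, and one verifies that the unique $5$-vertex $3$-graph all of whose $4$-subsets are $D_2$ is the ``pentagon'' $F^{\ast}=\{\{i,i+1,i+2\}:i\in\mathbb Z_5\}$, so it is enough to build an induced-$F^{\ast}$-free $3$-graph with polylogarithmic homogeneous sets. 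For larger odd $k$ one is reduced in the same way to a bounded list of configurations, among them $D_2^{(k)}$, the $k$-graph on $k+1$ vertices with two edges.

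The construction for these irreducible cases is the crux. Theorem~\ref{thm:EH} already tells us something strong: an induced-$F^{\ast}$-free $3$-graph whose homogeneous sets have size $n^{o(1)}$ cannot itself be induced-$D_2$-free, so it must be ``saturated with $D_2$'s'' while still avoiding $F^{\ast}$. This also rules out the two usual sources of pseudorandom extremal hypergraphs — an explicit algebraic (Paley-type) construction has homogeneous sets of polynomial size, and an iterated blow-up of a fixed gadget does too — forcing a genuinely random construction. The plan is to run an $F^{\ast}$-avoiding random process: expose the $\binom n3$ triples in a uniformly random order and decide each one (edge/non-edge) so as never to complete an induced copy of $F^{\ast}$, analysed by the differential-equation / nibble method. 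One must show (a) the process decides all but a negligible number of triples — equivalently that $F^{\ast}$ is avoidable with edge density bounded away from $0$ and $1$, which one checks by hand using that $F^{\ast}$ is a single, highly symmetric forbidden configuration — and (b) the output is quasirandom enough that the union bound controlling cliques and independent sets goes through, yielding homogeneous sets of size $O(\log n)$. Proving the pseudorandomness in (b) for the output of this constrained process is the main technical difficulty of the whole argument.
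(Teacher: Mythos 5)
Your reductions (passing to $(k+1)$-vertex induced subhypergraphs and to complements), your Ramsey case, and your parity construction are fine as far as they go: the coboundary argument does give a $k$-graph in which every $(k+1)$-set spans an even number of edges and whose homogeneous sets are polylogarithmic, which settles every even $k$ and, for odd $k$, every $F$ having a $(k+1)$-subset with an odd number of edges. But the proof is not complete, and the gap is exactly at the cases that matter: for $k=3$ the tight cycle $C_5$, and for odd $k\geq 5$ the ``even-parity'' patterns, which include the $(k+1)$-vertex two-edge $k$-graph (this $F$ is \emph{not} excluded by the hypothesis, since $F\neq D_2$ only refers to the $3$-uniform one). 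For these you offer a constrained random greedy process analysed by the nibble/differential-equation method and explicitly concede that the key pseudorandomness step ``is the main technical difficulty of the whole argument''; that is a plan, not a proof, and in addition the ``bounded list of irreducible configurations'' for odd $k\geq 5$ is never actually determined. Your meta-argument that these cases force such a construction is also wrong.

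The paper's proof shows there is nothing hard here: take $H$ to be the $k$-uniform hypergraph whose edges are the $k$-cliques of $G\sim G(n,1/2)$. Its homogeneous sets are polylogarithmic, and any $(k+1)$-set spanning a missing pair $uv$ of $G$ has all its edges among the two $k$-subsets avoiding $u$ or $v$, so every $(k+1)$-vertex induced subhypergraph of $H$ has $0,1,2$ or $k+1$ edges; passing to the complement, every $(k+1)$-vertex $F$ with $e(F)\notin\{0,1,2,k+1\}\cap\{0,k-1,k,k+1\}$ is avoided by $H$ or by $\overline{H}$, which leaves only the complete/empty patterns (classical Ramsey, \cite{EHR65}) and, when $k=3$, $D_2$ itself. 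In particular $\overline H$ handles your ``hard'' $D_2^{(k)}$ for $k\geq 4$, and $H$ handles $C_5$: if $\{a,b,c\},\{b,c,d\}$ are cliques of $G$ while $\{a,b,d\},\{a,c,d\}$ are not, then $ad\notin E(G)$, and chasing this around the tight cycle gives a contradiction, so $H$ is induced $C_5$-free even though it is full of induced $D_2$'s (consistently with Theorem \ref{thm:EH}). So the missing cases in your argument admit a two-line explicit construction, and the constrained-process machinery you defer to is unnecessary; as written, however, your proposal does not prove the claim.
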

	\begin{proof}
	It is enough to prove this in the case $|V(F)|=k+1$ or $F=C_5$, the 3-uniform tight cycle on 5 vertices. Indeed, every $F$ satisfying $|V(F)|\geq k+2$ induces some $F'\neq D_2$ with $|V(F')|=k+1$, with the exception of $F=C_5$. Let $G\sim G(n,1/2)$ be the random graph, where each edge is present independently with probability $1/2$, and let $H$ be the $k$-uniform hypergraph on $V(G)$, whose edges are the $k$-element cliques of $G$. Then $H$ has only polylogarithmic sized homogeneous sets. Note that if $k=3$, then $H$ contains no induced copy of $C_5$. Also, if $H$ contains an induced copy of a $k$-uniform hypergraph $F$ with $k+1$ vertices, then $|E(F)|\in \{0,1,2,k+1\}$. But then, if the complement of $H$ contains $F$, then $|E(F)|\in \{0,k-1,k,k+1\}$. Therefore, if neither $H$ nor the complement of $H$ avoids $F$, then  $F$ is the complete or empty $k$-uniform hypergraph on $k+1$ vertices, or $k=3$ and $F=D_2$. In the case $F$ is the complete or empty hypergraph, the claim is well known, see e.g. \cite{EHR65}. 
\end{proof}

Some further families of hypergraphs with polynomial homogeneous sets are studied in \cite{CS18, MS14, ST21}. Our paper is organized as follows. In the next subsections, we introduce our notation, and outline our proof. Then, in the next section, we prove several lemmas which will form the backbone of the proofs of our main theorems. We then prove Theorem \ref{thm:removal} in Section \ref{sect:removal} and Theorem \ref{thm:EH} in Section \nolinebreak \ref{sect:EH}. 

\subsection{Notation and preliminaries}
	For a graph $G$ and a set $X \subseteq V(G)$, the {\em density} of $X$ is $d(X) = \frac{e(X)}{\binom{|X|}{2}}$, where $e(X)$ is the number of edges of $G$ contained in $X$. For a pair of disjoint sets $X,Y \subseteq V(G)$, the density of $(X,Y)$ is $d(X,Y) = \frac{e(X,Y)}{|X||Y|}$. The pair $(X,Y)$ is called homogeneous if $d(X,Y) = 1$ or $d(X,Y) = 0$. We also use similar definitions for hypergraphs. For a $3$-uniform hypergraph $H$ and $X \subseteq V(H)$, the density of $X$ is $d(X) = \frac{e(X)}{\binom{|X|}{3}}$. For disjoint sets $X,Y,Z \subseteq V(H)$, the density of $(X,Y,Z)$ is $d(X,Y,Z) = \frac{e(X,Y,Z)}{|X||Y||Z|}$. Lastly, for disjoint sets $X,Y \subseteq V(H)$, we let $d(X,X,Y) = \frac{e(X,X,Y)}{\binom{|X|}{2}|Y|}$, where $e(X,X,Y)$ is the number of edges which have two vertices in $X$ and one in $Y$. We say that $(X,Y)$ is {\em $\varepsilon$-homogeneous} if either $d(X,X,Y),d(Y,Y,X) \geq 1 - \varepsilon$ or $d(X,X,Y),d(Y,Y,X) \leq \varepsilon$. If $(X,Y)$ is $0$-homogeneous then we simply say that $(X,Y)$ is {\em homogeneous}.
	
	Recall that a graph $G$ is called a {\em cograph} if either $|V(G)| = 1$ or $G$ can be obtained from two smaller vertex-disjoint cographs $G_1,G_2$ by placing a complete or empty bipartite graph between $V(G_1)$ and $V(G_2)$. It is well-known that cographs are perfect, implying that every cograph on $n$ vertices contains a clique or an independent set of size at least $\sqrt{n}$.
	It is also well-known \cite{Seinsche} that a graph $G$ is a cograph if and only if it is induced $P_4$-free, where $P_4$ is the path with 4 vertices. Alon and Fox \cite{AF} proved a polynomial removal lemma for cographs:
	\begin{theorem}[\cite{AF}]\label{lem:P_4}
		For every $\zeta > 0$ there is $\delta = \delta_{{\text{cograph}}}(\zeta) = \poly(\zeta) > 0$ such that if an $n$-vertex graph $G$ contains at most $\delta n^4$ induced copies of $P_4$, then $G$ can be turned into a cograph by adding/deleting at most $\zeta n^2$ edges. 
	\end{theorem}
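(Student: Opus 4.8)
The conclusion of the theorem is that $G$ can be made into a cograph by at most $\zeta n^2$ edge additions and deletions, so the equivalent contrapositive statement is: if $G$ is \emph{$\zeta$-far} from every cograph on $V(G)$, then $G$ contains at least $\poly(\zeta)\cdot n^4$ induced copies of $P_4$. Hence the only real issue is to make $\delta$ polynomial in $\zeta$. I would prove this by building an approximate cotree for $G$ greedily, using the following robust analogue of the classical fact that a cograph on at least two vertices is disconnected or has disconnected complement: an \emph{$\eta$-almost-disconnecting bipartition} of a vertex set $P$ is a partition $P=A\cup B$ with $A,B\neq\emptyset$ and bipartite density $d(A,B)\le\eta$ or $d(A,B)\ge1-\eta$. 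The plan is to show that $G$ admits such a decomposition at every scale \emph{unless} one of its induced subgraphs is already rich in copies of $P_4$.

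\emph{Construction and error analysis.} Fix $\eta=\gamma=\zeta$. Starting from the single part $V(G)$, repeat the following on a current part $P$: if $|P|>\gamma n$, choose an $\eta$-almost-disconnecting bipartition $(A,B)$ of $P$, record whether it is a union split ($d(A,B)\le\eta$) or a join split ($d(A,B)\ge1-\eta$), delete the at most $\eta|A||B|$ edges between $A$ and $B$ in the first case or add the at most $\eta|A||B|$ missing edges between them in the second, and continue on $A$ and on $B$; if $|P|\le\gamma n$, stop and delete every edge inside $P$ (at most $\binom{|P|}{2}$ of them). The resulting laminar family, read as a cotree, defines a cograph on $V(G)$. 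The crucial observation is that the pair-sets $A_s\times B_s$ over the split steps $s$, together with the sets $\binom{P}{2}$ over the stopped parts $P$, partition $\binom{V(G)}{2}$: every unordered pair is treated exactly once, at the step that first separates its two vertices, or else inside a single stopped part. Consequently the total number of edits is at most $\eta\sum_s|A_s||B_s|+\sum_P\binom{|P|}{2}\le\eta\binom{n}{2}+\tfrac{\gamma}{2}n^2\le\tfrac{\eta+\gamma}{2}n^2=\zeta n^2$, using disjointness of the pair-sets for the first term and $|P|\le\gamma n$ for the second. Since $G$ is $\zeta$-far from every cograph, this construction cannot be run to completion: there is a part $Q$ with $|Q|=m>\gamma n$ that has \emph{no} $\eta$-almost-disconnecting bipartition.

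\emph{The key lemma and the main obstacle.} It remains to prove that every such $Q$ contains at least $\poly(\eta)\cdot m^4$ induced copies of $P_4$; combined with $m>\gamma n$ this gives $\poly(\zeta)\cdot n^4$ induced copies in $G$, so one may take $\delta=\poly(\zeta)$ (with $n_0=2/\gamma=\poly(1/\zeta)$, so that $\gamma n\ge2$). This is the hard part. The difficulty is that the hypothesis on $Q$ only says that $G[Q]$ and its complement both have all degrees bounded away from $0$ and $m$ and contain no sparse cut, and density bounds of this kind do \emph{not} force even a single induced $P_4$ --- a bipartite half-graph is dense, co-dense, and $P_4$-free --- so one has to exploit the absence of a sparse cut in $G[Q]$ and in its complement simultaneously. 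A natural approach is to count induced copies of $P_4$ by their middle edge: for an edge $\{q,r\}$ of $G[Q]$, the induced copies of $P_4$ having $\{q,r\}$ as middle edge correspond bijectively to the non-edges between $N_Q(q)\setminus(N_Q(r)\cup\{r\})$ and $N_Q(r)\setminus(N_Q(q)\cup\{q\})$. So if $G[Q]$ has few induced copies of $P_4$, then for almost every edge $\{q,r\}$ these two neighbourhood-differences are small or the bipartite graph between them is nearly complete; from this one would like to locate, for a positive fraction of vertex pairs, a near-twin or near-domination relation, and then extract an almost-disconnecting bipartition of $Q$ from such a relation --- contradicting the choice of $Q$. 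Making this chain quantitative with only polynomial loss is essentially a robust version of the classical structure theorem for $P_4$-free graphs, and is where the real work lies. (Alon and Fox's actual argument may package the same ideas differently, for instance as a direct $\poly(1/\zeta)$-query one-sided test, but its combinatorial core is the same.)
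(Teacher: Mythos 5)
The statement you are proving is not proved in the paper at all: it is quoted as an external result of Alon and Fox \cite{AF}, so the only question is whether your argument stands on its own. It does not. Your outer construction is fine: the greedy approximate cotree, the observation that the pair-sets $A_s\times B_s$ and $\binom{P}{2}$ over stopped parts partition $\binom{V(G)}{2}$, and the resulting bound of $\tfrac{\eta+\gamma}{2}n^2$ edits are all correct, and they correctly reduce the theorem to the assertion you call the key lemma: every graph $Q$ on $m>\gamma n$ vertices with no $\eta$-almost-disconnecting bipartition contains at least $\poly(\eta)\,m^4$ induced copies of $P_4$. But this key lemma is the entire content of the theorem, and you do not prove it --- you only sketch a direction (counting induced $P_4$'s by their middle edge, hoping to extract near-twin or near-domination structure and from it an almost-homogeneous cut) and then explicitly concede that making this quantitative with only polynomial loss ``is where the real work lies.'' Nothing in the sketch shows how the simultaneous absence of sparse cuts in $Q$ and its complement is actually exploited, nor why the losses in the proposed chain are polynomial rather than, say, tower-type as in regularity-based arguments; your own half-graph example shows that the intermediate conclusions you can cheaply extract (density and degree conditions) are insufficient, so the decisive step is genuinely missing rather than routine.

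It is worth noting that your key lemma is essentially the graph analogue of the paper's main Lemma \ref{lem:main} (a single near-homogeneous balanced cut from a small induced-copy count, with polynomial dependence), and that the paper needs most of Section \ref{sect:lemmas} to prove its hypergraph version --- and even then it invokes the Alon--Fox theorem as a black box at the crucial point. So the reduction you give, while clean, converts the theorem into a statement of at least comparable difficulty and leaves it unproved; as written, the proposal is an honest plan with a correct wrapper, not a proof.
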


\subsection{Outline of the proofs}

Let us give a rough outline of the proofs of our main theorems. In order to prove Theorems \ref{thm:removal} and \ref{thm:EH}, we establish certain structural results about 3-graphs with no (or few) induced copies of $D_2$. The first crucial observation is that if $H$ is $D_2$-free, then the link graph of every vertex is a cograph. The bulk of the work is then put into proving the following: if $|V(H)|=n$ and $H$ contains (only) $\delta n^{4}$ induced copies of $D_2$, then there is a partition $V(H)=\mathcal{X}\cup\mathcal{Y}\cup\mathcal{S}$ such that $|\mathcal{S}|\leq \xi n$  (which we view as a set of `leftovers'), $|\mathcal{X}|,|\mathcal{Y}|>\xi n$, and $(\mathcal{X},\mathcal{Y})$ is $\varepsilon$-homogeneous, provided that $\delta \leq (\xi\varepsilon)^{c}$ for some absolute constant $c>0$. This is essentially done in Lemma \ref{lem:main}. By recursively applying this partition result, we arrive to a partition $V(H)=\mathcal{X}_1\cup \dots\cup \mathcal{X}_k\cup \mathcal{S}'$ and a cograph $C$ on vertex set $[k]$ with the following properties: $k=(1/\xi)^{O(1)}$, $|\mathcal{S}'|$ is small, $|\mathcal{X}_1|,\dots,|\mathcal{X}_k| \geq \xi^{O(1)}n$, $d(\mathcal{X}_i,\mathcal{X}_i,\mathcal{X}_j),d(\mathcal{X}_j,\mathcal{X}_i,\mathcal{X}_i)\geq 1-\varepsilon$ if $ij\in E(C)$, and $d(\mathcal{X}_i,\mathcal{X}_i,\mathcal{X}_j),d(\mathcal{X}_j,\mathcal{X}_i,\mathcal{X}_i) \leq \varepsilon$ if $ij\not\in E(C)$. Controlling the densities of pairs $(\mathcal{X}_i,\mathcal{X}_j)$ and using the fact that the hypergraph has few induced $D_2$'s then allows us to also control the densities of triples $(\mathcal{X}_i,\mathcal{X}_j,\mathcal{X}_k)$.

In order to prove Theorem \ref{thm:removal}, we do some cleaning up with the help of this partition. To prove Theorem \ref{thm:EH}, we observe that we can choose $\xi,\varepsilon=n^{-\Omega(1)}$, and show that a large clique or independent set in $C$ corresponds to a large clique or independent set in $H$.
	
	\section{The Main Lemmas}\label{sect:lemmas}

	In this section, we prove some lemmas regarding the structure of hypergraphs with no (or few) induced copies of $D_2$. 
	For a $3$-uniform hypergraph $H$ and $v \in V(H)$, denote by $L(v)$ the link graph of $v$. 
	
	\begin{lemma}\label{lem:cograph_links}
		If a $3$-uniform hypergraph $H$ is induced $D_2$-free, then all link graphs in $H$ are cographs. 
	\end{lemma}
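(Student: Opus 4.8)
By Seinsche's theorem, as recalled in the excerpt, it suffices to show that for every $v \in V(H)$ the link graph $L(v)$ contains no induced copy of $P_4$. So I would argue by contradiction: suppose $L(v)$ has an induced path on four vertices $a,b,c,d$ with edges $ab,bc,cd$ and non-edges $ac,ad,bd$. Unwinding the definition of $L(v)$, this means that in $H$ the triples $\{v,a,b\},\{v,b,c\},\{v,c,d\}$ are edges while $\{v,a,c\},\{v,a,d\},\{v,b,d\}$ are non-edges. The goal is to locate four vertices of $H$ spanning exactly two edges, i.e. an induced $D_2$, which contradicts the hypothesis.

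The key step is to examine the five four-element subsets of $\{v,a,b,c,d\}$ that contain $v$, and to use that \emph{both} ``exactly two edges'' configurations (the two edges, or their complement) are forbidden. First, $\{v,a,b,c\}$ contains the edges $\{v,a,b\},\{v,b,c\}$, the non-edge $\{v,a,c\}$, and the triple $\{a,b,c\}$; since this set must not span exactly two edges, $\{a,b,c\}$ must be an edge of $H$. Symmetrically, from $\{v,b,c,d\}$ we get that $\{b,c,d\}$ is an edge of $H$. Next, $\{v,a,c,d\}$ contains the edge $\{v,c,d\}$, the non-edges $\{v,a,c\},\{v,a,d\}$, and $\{a,c,d\}$; to avoid exactly two edges, $\{a,c,d\}$ must be a non-edge. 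Likewise, from $\{v,a,b,d\}$ (edge $\{v,a,b\}$, non-edges $\{v,a,d\},\{v,b,d\}$) we deduce that $\{a,b,d\}$ is a non-edge.

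Finally, consider the four-set $\{a,b,c,d\}$ itself: by the deductions above, $\{a,b,c\}$ and $\{b,c,d\}$ are edges of $H$ while $\{a,b,d\}$ and $\{a,c,d\}$ are non-edges, so $\{a,b,c,d\}$ spans exactly two edges (sharing the pair $\{b,c\}$), i.e. it is an induced $D_2$ — the desired contradiction. Hence $L(v)$ is induced $P_4$-free, and therefore a cograph. I do not anticipate a real obstacle here; the only care needed is to verify that the short case analysis over the four-subsets containing $v$ is exhaustive and that each step uses the forbidden-configuration hypothesis in the correct direction (an edge is forced in some cases, a non-edge in others).
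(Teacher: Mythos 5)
Your proof is correct and is essentially the same as the paper's: both argue that an induced $P_4$ $a,b,c,d$ in $L(v)$ forces $\{a,b,c\},\{b,c,d\}\in E(H)$ and $\{a,b,d\},\{a,c,d\}\notin E(H)$ (to avoid induced copies of $D_2$ on the four-sets containing $v$), whence $\{a,b,c,d\}$ itself spans an induced $D_2$. Your write-up just makes the paper's "similarly" steps explicit.
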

	\begin{proof}
		Let $v \in V(H)$, and suppose by contradiction that $a,b,c,d$ is an induced path in $L(v)$. In order for $v,a,b,c$ not to span an induced copy of $D_2$, we must have $\{a,b,c\} \in E(H)$. Similarly, $\{b,c,d\} \in E(H)$, and also,  $\{a,b,d\},\{a,c,d\} \notin E(H)$. But now $a,b,c,d$ span an induced copy of $D_2$, a contradiction. 
	\end{proof}

	\begin{lemma}\label{lem:pairs_to_triple}
		Let $\varepsilon > 0$, let $H$ be a $3$-uniform hypergraph and let $X,Y,Z \subseteq V(H)$ be disjoint, $|X| \geq 2/\varepsilon$. If $d(X,X,Y),d(X,X,Z) \geq 1 - \frac{\varepsilon^2}{8}$, then one of the following holds:
		\begin{enumerate}
			\item  $d(X,Y,Z) \geq 1 - \varepsilon$,
			\item $H$ contains at least $\frac{\varepsilon^2}{16}|X|^2|Y||Z|$ induced copies of $D_2$.
		\end{enumerate}
		Similarly, if $d(X,X,Y),d(X,X,Z) \leq \frac{\varepsilon^2}{8}$, then one of the following holds:
			\begin{enumerate}
			\item $d(X,Y,Z) \leq \varepsilon$.
			\item $H$ contains at least $\frac{\varepsilon^2}{16}|X|^2|Y||Z|$ induced copies of $D_2$.
		\end{enumerate}
	\end{lemma}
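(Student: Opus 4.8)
The plan is to analyze the situation where $d(X,Y,Z)$ is bounded away from $1$ (in the first case) and extract many induced copies of $D_2$. Fix a pair $y \in Y$, $z \in Z$. Say that $y$ is \emph{good for} $x \in X$ if $\{x,x',y\} \in E(H)$ for all but few $x' \in X$; since $d(X,X,Y) \geq 1 - \varepsilon^2/8$, an averaging argument shows that most vertices $x \in X$ have the property that most $y \in Y$ are good for them, and similarly with $Z$ in place of $Y$. The key point is that if the triple $(X,Y,Z)$ is not $(1-\varepsilon)$-dense, then for a $\geq \varepsilon/2$ fraction of pairs $(y,z) \in Y \times Z$ and a constant fraction of $x \in X$, we have $\{x,y,z\} \notin E(H)$ while the link constraints force enough edges among $\{x,x',y\},\{x,x',z\}$ to build a copy of $D_2$.

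Concretely, here is the core of the argument. Suppose $d(X,Y,Z) < 1-\varepsilon$, so there are at least $\varepsilon|X||Y||Z|$ non-edges of the form $\{x,y,z\}$ with $x\in X, y\in Y, z\in Z$. For each such non-edge, I want to find $x' \in X$ with $\{x,x',y\} \in E(H)$, $\{x,x',z\} \in E(H)$, and then check which of $\{x',y,z\}$ holds: the four vertices $x,x',y,z$ then span $\{x,x',y\}$ and $\{x,x',z\}$ as edges, $\{x,y,z\}$ as a non-edge, so they span an induced $D_2$ precisely when $\{x',y,z\}$ is also a non-edge. To handle the case $\{x',y,z\}\in E(H)$, observe that then $x'$ plays the role of a vertex forming an edge with $y,z$; we can instead look for a different witness. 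The cleanest route: for a fixed non-edge $\{x,y,z\}$, let $A_y = \{x' \in X : \{x,x',y\}\in E(H)\}$ and $A_z = \{x' \in X : \{x,x',z\}\in E(H)\}$. Since $d(X,X,Y) \geq 1-\varepsilon^2/8$, the average over $x$ of $|X\setminus A_y|$ is at most $\tfrac{\varepsilon^2}{8}\cdot\tfrac{1}{2}|X|$ times a constant, so for most $(x,y)$ we have $|A_y| \geq (1-\tfrac{\varepsilon}{4})|X|$, and similarly $|A_z|\geq (1-\tfrac{\varepsilon}{4})|X|$; hence $|A_y \cap A_z| \geq (1-\tfrac{\varepsilon}{2})|X| \geq |X|/2$ using $|X|\geq 2/\varepsilon$. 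Now among the $x' \in A_y\cap A_z$, if at least half of them satisfy $\{x',y,z\}\notin E(H)$, each gives an induced $D_2$ on $\{x,x',y,z\}$; if instead at least half satisfy $\{x',y,z\}\in E(H)$, then pick any two such $x'_1,x'_2$ together with $y,z$: this spans $\{x'_1,y,z\},\{x'_2,y,z\}$ — but we also need to know the edge $\{x'_1,x'_2,y\}$ or similar, which is not directly controlled. I expect this bookkeeping — ensuring that in \emph{either} sub-case one genuinely gets an induced $D_2$, and that copies are not overcounted by more than a constant factor — to be the main obstacle, and it likely requires choosing the witness $x'$ more carefully (e.g., restricting first to $x'$ with $\{x',y,z\}$ a non-edge and only falling back when those are scarce, in which case $\{x,y,z\}$ being a non-edge while most of $A_y\cap A_z$ have $\{x',y,z\}$ an edge lets one swap the roles of $x$ and a generic $x'$).

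Having set this up, the count is routine: there are $\geq \varepsilon|X||Y||Z|$ relevant non-edges $\{x,y,z\}$ (minus a lower-order correction for the $(x,y)$ or $(x,z)$ pairs that are ``bad'' in the averaging sense, which costs at most, say, $\varepsilon/2$ of them), and each contributes $\Omega(|X|)$ choices of the fourth vertex $x'$, giving $\Omega(\varepsilon |X|^2|Y||Z|)$ labelled configurations; dividing by the bounded multiplicity of each induced $D_2$ yields at least $\tfrac{\varepsilon^2}{16}|X|^2|Y||Z|$ induced copies, as claimed. The second half of the lemma (the sparse case $d(X,X,Y),d(X,X,Z)\leq \varepsilon^2/8$) follows by the identical argument applied to the complement hypergraph $\overline H$: note that $D_2$ is isomorphic to its own complement on $4$ vertices (both have $2$ edges), so induced copies of $D_2$ in $\overline H$ are exactly induced copies of $D_2$ in $H$, and the roles of ``density $\geq 1-\cdot$'' and ``density $\leq \cdot$'' swap. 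Thus a single argument handles both cases. The only inputs used are elementary double counting and the defining edge/non-edge pattern of $D_2$; no regularity machinery is needed, which is what makes the polynomial dependence possible.
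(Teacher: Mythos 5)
You have identified the right target configuration (two non-edges $\{x,y,z\},\{x',y,z\}$ together with the two edges $\{x,x',y\},\{x,x',z\}$, which is exactly an induced $D_2$), and your complementation remark for the second half is correct, since the complement of $D_2$ on four vertices is again $D_2$. But the step you yourself flag as ``the main obstacle'' is a genuine gap, and your proposed fix does not close it. Fix a non-edge $\{x,y,z\}$ and suppose almost every $x'\in A_y\cap A_z$ satisfies $\{x',y,z\}\in E(H)$. Then $x,x',y,z$ spans three edges, not two, and swapping the roles of $x$ and a generic $x'$ does not help: what is needed is a \emph{second} vertex of $X$ non-adjacent to the same pair $(y,z)$, and by assumption there are almost none. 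In this local situation there may simply be no induced $D_2$ inside $X\cup\{y,z\}$, so no cleverer choice of witness can succeed; the resolution has to be global rather than local. Namely, if for most pairs $(y,z)$ the number $d(y,z)$ of $x\in X$ with $\{x,y,z\}\notin E(H)$ is tiny, then the total number of non-edges is below $\varepsilon|X||Y||Z|$ (this is where $|X|\geq 2/\varepsilon$ enters) and Item 1 holds, and one must quantify this trade-off.

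The paper does this in one stroke by counting \emph{pairs} of non-edges over the same $(y,z)$ via convexity: Jensen gives $\sum_{(y,z)}\binom{d(y,z)}{2}\geq |Y||Z|\binom{\varepsilon|X|}{2}\geq \frac{\varepsilon^2}{2}\binom{|X|}{2}|Y||Z|$, so a uniformly random $(\{x,x'\},y,z)$ has both $\{x,y,z\},\{x',y,z\}$ non-edges with probability at least $\frac{\varepsilon^2}{2}$, while $\{x,x',y\},\{x,x',z\}$ are both edges with probability at least $1-\frac{\varepsilon^2}{4}$; hence an induced $D_2$ arises with probability at least $\frac{\varepsilon^2}{4}$, and each quadruple is counted once, so there is no multiplicity issue. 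Your closing ``routine'' count also betrays the gap: you assert that each of the $\geq\varepsilon|X||Y||Z|$ non-edges has $\Omega(|X|)$ valid partners $x'$, which is exactly the unproved claim, and it would give a bound linear rather than quadratic in $\varepsilon$ --- the $\varepsilon^2$ in the lemma comes precisely from the pair-counting step your argument lacks.
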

	\begin{proof}
		Suppose that $d(X,X,Y),d(X,X,Z) \geq 1 - \frac{\varepsilon^2}{8}$, the other case follows by taking complements. Assume that Item 1 does not hold, that is, $d(X,Y,Z) < 1 - \varepsilon$. We will show that Item 2 holds. For each $y \in Y, z \in Z$, let $d(y,z)$ be the number of $x \in X$ such that $\{x,y,z\} \notin E(H)$. Then $\sum_{(y,z) \in Y \times Z}{d(y,z)} > \varepsilon |X||Y||Z|$. By Jensen's inequality, we have
		\begin{equation}\label{eq:d(X,Y,Z) Jensen}
		\sum_{(y,z) \in Y \times Z}{\binom{d(y,z)}{2}} > |Y||Z| \cdot \binom{\varepsilon |X|}{2} \geq \varepsilon^2|Y||Z||X|^2/4 \geq \frac{\varepsilon^2}{2}\binom{|X|}{2}|Y||Z|. 
		\end{equation}
		Sample $\{x,x'\} \in \binom{X}{2}$, $y \in Y$, $z \in Z$  independently and uniformly at random. By \eqref{eq:d(X,Y,Z) Jensen}, we have that  $\{x,y,z\},\{x',y,z\} \notin E(H)$ holds with probability at least $\frac{\varepsilon^2}{2}$. On the other hand, since $d(X,X,Y),d(X,X,Z) \geq 1-\frac{\varepsilon^2}{8}$, the probability that $\{x,x',y\} \notin E(H)$ or $\{x,x',z\} \notin E(H)$ is at most $2 \cdot \frac{\varepsilon^2}{8} = \frac{\varepsilon^2}{4}$. Hence, with probability at least $\frac{\varepsilon^2}{4}$, the vertices $x,x',y,z$ span an induced copy of $D_2$. This means that $H$ contains at least $\frac{\varepsilon^2}{4} \binom{|X|}{2}|Y||Z| \geq \frac{\varepsilon^2}{16}|X|^2|Y||Z|$ induced copies of $D_2$, as required by Item 2. 
	\end{proof}

\begin{lemma}\label{lem:homogeneous}
	Let $\gamma > 0$, let $H$ be an $n$-vertex $3$-uniform hypergraph, let $v \in V(H)$, and let $X,Y,Z \subseteq V(H) \setminus \{v\}$ be disjoint, $|X|,|Y| \geq 2/\gamma$.
	Suppose that $d_{L(v)}(X,Z) \geq 1 - \frac{\gamma^2}{80}$ and $d_{L(v)}(Y,Z) \leq \frac{\gamma^2}{80}$. If $d_{L(v)}(X,Y) \geq 1 - \frac{\gamma^2}{80}$, then one of the following holds:
	\begin{enumerate}
		\item  $d(X,X,Y),d(Y,Y,X) \geq 1 - \gamma$, 
		\item $H$ contains at least 
		$\frac{\gamma^2}{128} \cdot \min \left\{ 
		|X|^2 |Y||Z|, \;
		|X| |Y|^2|Z|, \; |X|^2 |Y|^2
		\right\}$
		induced copies of $D_2$. 
		\item $H$ contains at least 
		$\frac{\gamma^2}{128} \cdot \frac{1}{n} \cdot 
		\min \left\{|X|^2 |Y||Z|, \; |X| |Y|^2|Z| \right\}$ 
		induced copies of $D_2$ which contain \nolinebreak $v$. 
	\end{enumerate} 
    Similarly, if $d_{L(v)}(X,Y) \leq \frac{\gamma^2}{80}$, then one of the following holds:
    	\begin{enumerate}
		\item $d(X,X,Y),d(Y,Y,X) \leq \gamma$.
		\item $H$ contains at least 
		$\frac{\gamma^2}{128} \cdot \min \left\{ 
		|X|^2 |Y||Z|, \;
		|X| |Y|^2|Z|, \; |X|^2 |Y|^2
		\right\}$
		induced copies of $D_2$. 
		\item $H$ contains at least 
		$\frac{\gamma^2}{128} \cdot \frac{1}{n} \cdot 
		\min \left\{|X|^2 |Y||Z|, \; |X| |Y|^2|Z| \right\}$ 
		induced copies of $D_2$ which contain \nolinebreak $v$. 
	\end{enumerate} 
\end{lemma}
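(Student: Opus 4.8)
We only treat the first statement; the second follows by replacing $H$ with its complement. The plan is to assume that Item~1 fails — so $d(X,X,Y)<1-\gamma$ or $d(Y,Y,X)<1-\gamma$ — and to produce the required induced copies of $D_2$ in each of these two cases, which are genuinely different because the hypotheses on $Z$ treat $X$ and $Y$ asymmetrically. In both cases the basic move is the following: take a non-edge of $H$ with two vertices in one of $X,Y$ and one in the other (a \emph{bad triple}); then for a \emph{typical} $z\in Z$ (meaning that the triples through $v$ involving $z$ and the vertices of the bad triple behave as the hypotheses predict) the five vertices consisting of $v$, $z$ and the three vertices of the bad triple have seven of their ten triples already determined, and a short case analysis of the remaining three exhibits an induced $D_2$ on four of these five. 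We always record whether this copy of $D_2$ uses $v$ (in which case it is charged only to Item~3, which is weaker by a factor $n$) or not (Item~2). Crucially, the factor $1/n$ in Item~3 together with the $\min$ of three (resp.\ two) quantities in Item~2 (resp.\ Item~3) will absorb all over-counting, since every vertex ranges over at most $n$ values and the density hypotheses ensure that most $z\in Z$ are typical for most bad triples.

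\emph{The easy case $d(Y,Y,X)<1-\gamma$.} Using $|Y|\ge 2/\gamma$ and $d_{L(v)}(X,Y)\ge 1-\gamma^2/80$, at least half of the $\ge\gamma\binom{|Y|}{2}|X|$ bad triples $(\{y,y'\},x)$, i.e.\ those with $\{x,y,y'\}\notin E(H)$, additionally satisfy $\{v,x,y\},\{v,x,y'\}\in E(H)$; call these \emph{good}. If a constant fraction of the good bad triples have $\{v,y,y'\}\notin E(H)$, then $\{v,x,y,y'\}$ spans an induced $D_2$ for each of them (edges $\{v,x,y\},\{v,x,y'\}$; non-edges $\{v,y,y'\},\{x,y,y'\}$), which already suffices for Item~3. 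Otherwise most good bad triples have $\{v,y,y'\}\in E(H)$, and then for such a triple together with a typical $z$ (meaning $\{v,x,z\}\in E(H)$ and $\{v,y,z\},\{v,y',z\}\notin E(H)$) a two-line case analysis of the triples $\{x,y,z\},\{x,y',z\},\{y,y',z\}$ shows that one of $\{v,x,y,z\}$, $\{v,x,y',z\}$, $\{x,y,y',z\}$, $\{v,y,y',z\}$ spans an induced $D_2$; for instance, if all three of $\{x,y,z\},\{x,y',z\},\{y,y',z\}$ are edges then $\{v,y,y',z\}$ works. Pigeonholing over these four types, and observing that a copy of the first two types is counted at most $|Y|$ times, one of the third type exactly once, and one of the fourth type at most $|X|$ times, delivers Item~2 or Item~3.

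\emph{The hard case $d(X,X,Y)<1-\gamma$.} The analogous bad triples $(\{x,x'\},y)$, with $\{x,x',y\}\notin E(H)$, are again mostly good in the sense that $\{v,x,y\},\{v,x',y\}\in E(H)$. If a constant fraction have $\{v,x,x'\}\notin E(H)$ we read off enough induced copies of $D_2$ on $\{v,x,x',y\}$ for Item~3. Otherwise most have $\{v,x,x'\}\in E(H)$, and for such a triple and a typical $z$ (now $\{v,x,z\},\{v,x',z\}\in E(H)$ and $\{v,y,z\}\notin E(H)$) the case analysis of $\{x,y,z\},\{x',y,z\},\{x,x',z\}$ produces an induced $D_2$ on $\{v,x,y,z\}$, $\{v,x',y,z\}$ or $\{x,x',y,z\}$ \emph{unless} all three of these triples are edges of $H$; in that ``stuck'' configuration one checks that among the ten triples of $\{v,x,x',y,z\}$ only $\{v,y,z\}$ and $\{x,x',y\}$ are non-edges, so no four of the five vertices span an induced $D_2$. (This is exactly where the asymmetry bites: $\{v,x,z\},\{v,x',z\}\in E(H)$ prevents $\{v,x,x',z\}$ from being sparse, in contrast with $\{v,y,y',z\}$ in the easy case.) To break the stuck configuration we must involve further vertices, and here we use $|X|\ge 2/\gamma$: by convexity, $d(X,X,Y)<1-\gamma$ forces $\gtrsim\gamma^2|X|^2|Y|^2$ four-sets $\{x,x',y,y'\}$ (two vertices in $X$, two in $Y$) with $\{x,x',y\},\{x,x',y'\}\notin E(H)$. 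Either $\gtrsim\gamma^2|X|^2|Y|^2$ of these are induced copies of $D_2$, giving Item~2 via the third term of its $\min$; or most are not, and then — relabelling so that also $\{x,y,y'\}\notin E(H)$, and restricting to four-sets with $\{v,x,y\},\{v,x,y'\}\in E(H)$ — the following chain applies: if $\{v,y,y'\}\notin E(H)$ then $\{v,x,y,y'\}$ spans $D_2$; otherwise, for a typical $z$ (meaning $\{v,x,z\}\in E(H)$ and $\{v,y,z\},\{v,y',z\}\notin E(H)$), if $\{y,y',z\}\in E(H)$ then $\{v,y,y',z\}$ spans $D_2$; otherwise if $\{x,y,z\},\{x,y',z\}\in E(H)$ then $\{x,y,y',z\}$ spans $D_2$; otherwise, using the $y\leftrightarrow y'$ symmetry of what remains we may assume $\{x,y,z\}\notin E(H)$, and then $\{v,x,y,z\}$ spans an induced $D_2$ (edges $\{v,x,y\},\{v,x,z\}$; non-edges $\{v,y,z\},\{x,y,z\}$). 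Each escape, after the standard over-counting estimate, yields enough copies for Item~2 or Item~3. We expect the stuck configuration — where a single vertex of $Z$ does not force an induced $D_2$, so the convexity step and the longer chain above become necessary — to be the main obstacle; the rest is routine but lengthy bookkeeping (passing to good and typical sub-configurations, pigeonholing among the bounded number of $D_2$-types, estimating over-counting), which must be carried out carefully enough that the thresholds $\gamma^2/80$ and $\gamma^2/128$ absorb every loss.
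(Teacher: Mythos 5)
Your plan is correct, and its engine is the same as the paper's: pair a non-edge inside $X\cup Y$ with $v$ and a link-typical $z\in Z$, case-analyse the few undetermined triples among the resulting five vertices, and charge copies through $v$ to the weaker Item~3. The difference is in how the two halves of Item~1 are organised. The paper first proves the $Y$-side bound in the stronger form $d(Y,Y,X)\ge 1-\gamma^2/8$, and then handles $d(X,X,Y)<1-\gamma$ by a purely four-vertex argument: Jensen gives at least $\frac{\gamma^2}{2}\binom{|X|}{2}\binom{|Y|}{2}$ quadruples $(\{x,x'\},\{y,y'\})$ with both $X$-triples missing, and the already-established $Y$-side bound forces almost all of them to have $\{x,y,y'\},\{x',y,y'\}\in E(H)$, i.e.\ to be induced copies of $D_2$ outright --- no $v$, no $z$, no overcounting. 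Your hard case is exactly this step with the bootstrap replaced by a fallback: quadruples that are not copies of $D_2$ hand you $Y$-side non-edges $\{x,y,y'\}$ (about $\frac{\gamma^2}{16}|X||Y|^2$ of them after dividing by the fibre size $|X|$), and your chain --- which coincides with the paper's first-step case analysis, and whose ``stuck configuration'' observation correctly identifies why the naive $(\{x,x'\},y,z)$ attack cannot work on the $X$ side --- then finishes as in your easy case. So the mathematics checks out; two points need care in the write-up. First, the reduction of the second statement needs complementation \emph{and} the swap $X\leftrightarrow Y$, since the hypotheses involving $Z$ are asymmetric. Second, your bookkeeping style (pass to a constant fraction of good triples, then pigeonhole over four $D_2$-types with overcounts $|X|$, $|Y|$ or $1$) leaks constant factors and, as sketched, can fall short of the stated $\gamma^2/128$ when $|Z|=\Theta(n)$; instead count globally all tuples $(x,\{y,y'\},z)$ satisfying the relevant event (at least $\frac{\gamma^2}{16}|X|\binom{|Y|}{2}|Z|$ of them) and split only according to whether the produced copy contains $v$, using that a copy through $v$ arises from at most $n$ tuples --- with this accounting both your easy case and your fallback reach exactly the constants in Items~2 and~3.
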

\begin{proof}
	 We will only prove the assertion in the case that $d_{L(v)}(X,Y) \geq 1 - \frac{\gamma^2}{80}$; 
	the case $d_{L(v)}(X,Y) \leq \frac{\gamma^2}{80}$ then follows by considering the complement of $H$ and switching the roles of $X$ and $Y$. We assume Items 2,3 do not hold, and show that then Item 1 must hold. Let us first show that $d(Y,Y,X) \geq 1 - \frac{\gamma^2}{8} \geq 1 - \gamma$. So suppose for contradiction that 
	$d(Y,Y,X) < 1 - \frac{\gamma^2}{8}$. Sample 
	$x \in X, \{y,y'\} \in \binom{Y}{2}, z \in Z$ uniformly at random and independently. Let $\mathcal{A}$ be the event that the following two items hold:
	\begin{enumerate}
		\item[(a)] $\{x,y,y'\} \notin E(H)$.
		\item[(b)] $\{x,y\},\{x,y'\},\{x,z\} \in E(L(v))$ and $\{z,y\},\{z,y'\} \notin E(L(v))$.
	\end{enumerate}
	By assumption, (a) holds with probability larger than $\frac{\gamma^2}{8}$. Also, since $d_{L(v)}(X,Y), d_{L(v)}(X,Z) \geq 1 - \frac{\gamma^2}{80}$ and $d_{L(v)}(Y,Z) \leq \frac{\gamma^2}{80}$, Item (b) holds with probability at least $1 - 5 \cdot \frac{\gamma^2}{80} = 1 - \frac{\gamma^2}{16}$. Hence, $\mathbb{P}[\mathcal{A}] \geq \frac{\gamma^2}{16}$.
	Therefore, the number of 4-tuples $(x,\{y,y'\},z)$ satisfying $\mathcal{A}$ is at least 
	$\frac{\gamma^2}{16} |X| \binom{|Y|}{2}|Z| \geq \frac{\gamma^2}{64}|X||Y|^2|Z|$. 
	We will now show that if $\mathcal{A}$ happens then $H[\{v,x,y,y',z\}]$ contains an induced copy of $D_2$. This would imply that either $H$ contains at least $\frac{\gamma^2}{128}|X||Y|^2|Z|$ induced copies of $D_2$, or $H$ contains at least $\frac{\gamma^2}{128} \cdot \frac{1}{n} \cdot |X||Y|^2|Z|$ induced copies of $D_2$ which contain $v$. In either case, we will obtain a contradiction to the assumption that Items 2 and 3 do not hold. 
	
	So suppose that $\mathcal{A}$ happens. Note that 
	\begin{itemize}
	    \item $\{v,x,y\},\{v,x,y'\},\{v,x,z\} \in E(H)$,
	    \item $\{x,y,y'\},\{v,z,y\},\{v,z,y'\} \notin E(H)$.
	\end{itemize}
  Now consider several cases. If $\{x,y,z\} \notin E(H)$ then $v,x,y,z$ span an induced copy of $D_2$. So we may assume that $\{x,y,z\} \in E(H)$, and similarly $\{x,y',z\} \in E(H)$. If $\{v,y,y'\} \notin E(H)$ then $v,x,y,y'$ span an induced copy of $D_2$, so we may assume that $\{v,y,y'\} \in E(H)$. Finally, if $\{y,y',z\}\in E(H)$, then $v,y,y',z$ span an induced copy of $D_2$, and if $\{y,y',z\} \notin E(H)$, then $x,y,y',z$ span an induced copy of $D_2$, proving our claim. 
	
	So far we have shown that $d(Y,Y,X) \geq 1 - \frac{\gamma^2}{8}$. Let us now show that $d(X,X,Y) \geq 1 - \gamma$. Suppose for a contradiction that $d(X,X,Y) < 1 - \gamma$. For a pair $\{x,x'\} \in \binom{X}{2}$, let $d(x,x')$ be the number of $y \in Y$ such that $\{x,x',y\} \notin E(H)$. Then $\sum_{\{x,x'\}}{d(x,x')} > \gamma\binom{|X|}{2}|Y|$. By Jensen's inequality, 
	$$
	\sum_{\{x,x'\}}{\binom{d(x,x')}{2}} > \binom{|X|}{2}\binom{\gamma |Y|}{2} \geq \frac{\gamma^2}{2}\binom{|X|}{2}\binom{|Y|}{2}.
	$$
	This means that when sampling $\{x,x'\} \in \binom{X}{2}, \{y,y'\} \in \binom{Y}{2}$ uniformly at random, we have \linebreak $\{x,x',y\},\{x,x',y'\} \notin E(H)$ with probability at least $\frac{\gamma^2}{2}$. Also, since $d(Y,Y,X) \geq 1 - \frac{\gamma^2}{8}$, we have that $\{x,y,y'\},\{x',y,y'\} \in E(H)$ with probability at least $1 - \frac{\gamma^2}{4}$. Hence, with probability at least $\frac{\gamma^2}{4}$, the vertices $x,x',y,y'$ induce a copy of $D_2$. Therefore, $H$ has at least 
	$\frac{\gamma^2}{4}\binom{|X|}{2}\binom{|Y|}{2} \geq \frac{\gamma^2}{64}|X|^2|Y|^2$ 
	induced copies of $D_2$, a contradiction. 
\end{proof}

	Next, in the following lemma, we show that cographs have partitions in which all pairs of parts, except for pairs which form a matching, are homogeneous. We will also guarantee some additional structure on the non-homogeneous pairs.

\begin{lemma}\label{lem:cograph_partition_main}
		Let $1 \leq m < n$ be integers, let $0 < \beta < 1$,
		and let $G$ be a cograph on $n$ vertices. Then there is a partition $V(G) = S \cup V_1 \cup \dots \cup V_t$ and a set of pairs $\mathcal{M} \subseteq \binom{[t]}{2}$ forming a matching, such that the following holds:
		\begin{enumerate}
			\item $|S| \leq (2\lceil \frac{n}{m} \rceil - 3) \cdot 10\beta m \leq 20\beta n$.
			\item $\beta m \leq |V_i| \leq m$ for every $1 \leq i \leq t$. 
			\item For every $1 \leq i < j \leq t$, if $\{i,j\} \notin \mathcal{M}$ then the pair $(V_i,V_j)$ is homogeneous. 
			\item For every $\{i,j\} \in \mathcal{M}$, there are $V'_i \subseteq V_i, V'_j \subseteq V_j$ such that 
			$|V'_i|,|V'_j| \geq \beta^3 m/2$
			and one of the following holds:
			\begin{enumerate}
				\item $V'_i$ is complete to $V_j$ and $V'_j$ is complete to $V_i$.
				\item $V'_i$ is empty to $V_j$ and $V'_j$ is empty to $V_i$.
			\end{enumerate} 
		\end{enumerate}
	\end{lemma}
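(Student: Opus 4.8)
\emph{Plan.} I would induct on $\lceil n/m\rceil$ and process $G$ through its cotree, using repeatedly the following fact: if in a cograph $H$ a set $P$ lies inside one child‑subtree of a cotree node $u$ and $Q$ inside a different child‑subtree of $u$, then $(P,Q)$ is homogeneous (complete if $u$ is a join node, empty if a union node). We may assume $\beta<1/20$, since otherwise $t=0$, $S=V(G)$ satisfies item~1 as $|S|=n\le 20\beta n$; note also $2\beta m<m/10$, so two sets of size $<\beta m$ have union of size $<m$. Starting at the root, follow the ``heavy child'' down the cotree to get a path $x=w^{(0)},w^{(1)},\dots,w^{(k)}$, where $w^{(i)}$ is the larger child of $w^{(i-1)}$ and $k$ is minimal with $|w^{(k)}|\le m$ (if $n\le m$, output the single part $V(G)$). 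Write $b^{(i)}$ for the smaller child of $w^{(i-1)}$, so $|b^{(i)}|\le |w^{(i-1)}|/2\le n/2$ and $\sum_i|b^{(i)}|=n-|w^{(k)}|$. The leaf set of $w^{(k)}$ has size in $(m/2,m]$, hence is a legal part; each $b^{(i)}$ with $|b^{(i)}|>m$ is recursed on (and $\lceil |b^{(i)}|/m\rceil<\lceil n/m\rceil$, so the induction applies), and by the key fact parts from different recursive calls, or in $w^{(k)}$, are automatically homogeneous, so such calls contribute nothing new to $\mathcal M$. Each $b^{(i)}$ with $\beta m\le |b^{(i)}|\le m$ becomes a part on its own; being a pure leaf‑set of a single subtree it too is homogeneous with everything, so is not in $\mathcal M$.

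The only remaining vertices lie in the side‑blobs $b^{(i)}$ with $|b^{(i)}|<\beta m$, and these are the sole source of both $S$ and $\mathcal M$. For a fixed parity $\pi\in\{0,1\}$, every $b^{(i)}$ with $i\equiv\pi\pmod 2$ relates to every $b^{(j)}$ with $j>i$, and to the leaf set of $w^{(k)}$, via the label of $w^{(i-1)}$; since cotree labels alternate along the path, all these labels coincide for fixed $\pi$. Hence grouping \emph{consecutive} small $\pi$‑blobs into parts $Q^{\pi}_1,Q^{\pi}_2,\dots$ — closing a group once it first reaches size $\ge\beta m$ (so each has size $<2\beta m\le m$, with at most one undersized remainder per parity discarded to $S$) — yields groups that are pairwise homogeneous and homogeneous with everything ``below'' them. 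A group $Q^{\pi}_\ell$ can fail to be homogeneous only with a group $Q^{1-\pi}_{\ell'}$ whose block of blob‑indices it interleaves; choosing the cut‑points of the two families so that the $\pi$‑groups and the $(1-\pi)$‑groups cover matching intervals of blob‑indices makes these clashes a partial matching, which we put into $\mathcal M$.

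For item~4, take a matching pair $(Q,Q')$ with $Q$ a $\pi$‑group and $Q'$ a $(1-\pi)$‑group over the same block of blob‑indices. A $\pi$‑blob and a $(1-\pi)$‑blob at positions $i<j$ relate by the fixed label $\lambda$ at the lower position; so letting $V'\subseteq Q$ be the union of the blobs of $Q$ whose index lies below the block's midpoint and $V''\subseteq Q'$ the union of the blobs of $Q'$ whose index lies above it, every vertex of $V'$ is joined to (if $\lambda$ is a join) or separated from (if $\lambda$ is a union) every vertex of $V''$, giving case 4(a) or 4(b). The delicate point is to arrange the grouping so that $V'$ and $V''$ still contain $\ge\beta^3 m/2$ vertices even when individual small blobs are wildly unbalanced: one cuts each group at a place chosen by \emph{vertex}‑counts rather than blob‑counts, which costs roughly an extra factor $\beta^2$ over $\beta m$ and accounts for the exponent $3$.

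Finally, for item~1: only undersized leftover groups enter $S$, at most two per heavy path, plus a bounded number of terminal discards across the main recursion; the recursion tree has at most $\lceil n/m\rceil$ leaves corresponding to terminal pieces of size $>m/2$, and a careful accounting of its internal structure bounds the number of such ``charging events'' by $2\lceil n/m\rceil-3$, each of cost at most $10\beta m$, so $|S|\le (2\lceil n/m\rceil-3)\cdot 10\beta m\le 20\beta n$. The main obstacle is exactly the caterpillar‑like structure formed by the small side‑blobs along a heavy path: one must simultaneously ensure that (i) the non‑homogeneous pairs form a genuine matching (not just a bounded‑degree graph), (ii) each matching pair carries the item‑4 witnesses of size $\ge\beta^3 m/2$ despite unbalanced blob sizes, and (iii) the discarded vertices fit the budget — and all three are controlled by the parity‑respecting, index‑aligned way the small side‑blobs are grouped.
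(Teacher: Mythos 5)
Your overall architecture (process a heavy path of the cotree, recurse on large side blobs, group the small side blobs) is a genuinely different route from the paper's, but the proof has a real gap at exactly the step that carries the content of the lemma: the claim that the non-homogeneous pairs among your parity-groups can be made into a matching. Along a heavy path, two groups of opposite parity are non-homogeneous precisely when their index ranges overlap, so you need the cut points of the two parity families to essentially coincide. You assert this can be done ("choosing the cut-points \dots so that the $\pi$-groups and the $(1-\pi)$-groups cover matching intervals"), but this is where the difficulty lives: if the vertex mass of the two parity classes is very unbalanced (say parity-$0$ blobs of mass close to $\beta m$ interleaved with much lighter parity-$1$ blobs), then any aligned block containing $\geq \beta m$ of parity-$1$ mass may contain more than $m$ of parity-$0$ mass, forcing you to split the parity-$0$ side into several legal parts, each of which then overlaps (hence clashes with) the single parity-$1$ group — degree larger than $1$, not a matching. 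The alternatives you leave open (discard the light-parity pieces block by block, or merge them across blocks) respectively threaten the $|S|$ budget and recreate multi-overlaps, and your proposal does not resolve this trade-off; likewise the item~4 witnesses of size $\beta^3 m/2$ and the $(2\lceil n/m\rceil-3)\cdot 10\beta m$ accounting are asserted ("a careful accounting", "costs roughly an extra factor $\beta^2$") rather than proved. Note also that in your scheme the total small-blob mass handled on a single heavy path can be of order $n$, not $O(m)$, so "at most two undersized remainders per heavy path" does not by itself bound the discards.

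For comparison, the paper never faces this alignment problem because it does not process a whole heavy path at once: it peels homogeneous pieces $A_1,A_2,\dots$ only until their union first reaches size $m$, so the "caterpillar" mass handled at each level is less than $m$. That prefix, ordered by peeling time and cut into only $\lceil 1/\beta\rceil$ intervals, yields at most three parts $X_1,Y_1,X_2$, exactly one potentially non-homogeneous pair $\{X_1,Y_1\}$ (with explicit witnesses $X\cap I_1$ and $Y\cap I_s$ of size $\geq \beta^3 m/2$), and at most $10\beta m$ discarded vertices; the two large remainders $A_k$ and the rest are kept whole or recursed, which is precisely what produces the matching structure and the $(2\lceil n/m\rceil-3)\cdot 10\beta m$ bound by induction. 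If you want to salvage your heavy-path approach, you would need to either prove the aligned-grouping claim in the unbalanced regime (quantifying the discards it forces), or truncate the heavy path after $O(m)$ side mass, at which point you have essentially rediscovered the paper's argument.
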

	\begin{proof}
		The proof is by induction on $n$. The base case $n = 2$ is trivial. 
		Given $G$, define sets $A_1,A_2,\dots$ as follows. 
		For each $i \geq 0$, if 
		$|A_1 \cup \dots \cup A_i| \geq m$ then stop. Otherwise let $A \cup B$ be a partition of $V(G) \setminus (A_1 \cup \dots \cup A_i)$ such that the bipartite graph $(A,B)$ is complete or empty and $|A| \leq |B|$. Such a partition is guaranteed because $G$ is a cograph, as $|V(G) \setminus (A_1 \cup \dots \cup A_i)| > n - m \geq 1$.
		Set $A_{i+1} = A$. Suppose that the process stopped after $k$ steps, and consider the sets $A_1,\dots,A_k$. By definition, we have $|A_1 \cup \dots \cup A_{k-1}| < m$ and $|A_1 \cup \dots \cup A_k| \geq m$. For each $1 \leq i \leq k$, the bipartite graph between $A_i$ and $V(G) \setminus (A_1 \cup \dots \cup A_i)$ is either complete or empty. Let $I^+$ be the set of indices $1 \leq i \leq k-1$ for which this bipartite graph is complete, and $I^-$ the set of $1 \leq i \leq k-1$ for which this bipartite graph is empty. 
		
		Put $X := \bigcup_{i \in I^+}{A_i}$, $Y := \bigcup_{i \in I^-}{A_i}$, 
		$Z_1 = A_k$ and $Z_2 = V(G) \setminus (A_1 \cup \dots \cup A_k) = V(G) \setminus (X \cup Y \cup Z_1)$. Note that $X$ is complete to $Z_1 \cup Z_2$ and $Y$ is empty to $Z_1 \cup Z_2$. Also, the bipartite graph $(Z_1,Z_2)$ is homogeneous.  
		Set $r = \lceil \frac{1}{\beta} \rceil$. Order the elements of $X \cup Y = A_1 \cup \dots \cup A_{k-1}$ such that the elements of $A_i$ come before the elements of $A_j$ for every $1 \leq i < j \leq k-1$, and partition $X \cup Y$ into $r$ intervals $I_1,\dots,I_r$ according to this order, 
		with $|I_i| = \lfloor |X \cup Y|/r \rfloor$ or $|I_i| = \lceil |X \cup Y|/r \rceil$ for every $i$, 
		and with $|I_1| \geq \dots \geq |I_r|$. 
		Observe that for $1 \leq i < j \leq r$, the bipartite graph $(X \cap I_i, Y \cap I_j)$ is complete, and the bipartite graph $(Y \cap I_i, X \cap I_j)$ is empty. Suppose without loss of generality that $|X \cap I_1| \geq |I_1|/2$; the case that $|Y \cap I_1| \geq |I_1|/2$ is symmetric. Let $s$ be the largest integer satisfying $|Y \cap I_s| > \beta |I_s|$; if no such $s$ exists then set $s = 0$. Set $X_1 = X \cap (I_1 \cup \dots \cup I_{s-1})$, 
		$Y_1 = Y \cap (I_2 \cup \dots \cup I_s)$, $X_2 = X \cap (I_{s+1} \cup \dots \cup I_r)$. Initialize $S = \emptyset$, and put the elements of
		$Y \cap (I_{s+1} \cup \dots \cup I_r)$, $Y \cap I_1$, and $X \cap I_s$ into $S$. Then $X \cup Y = X_1 \cup X_2 \cup Y_1 \cup S$. 
		Note that $|Y \cap (I_{s+1} \cup \dots \cup I_r)| \leq \beta (|I_{s+1}| + \dots + |I_r|) \leq \beta (|X| + |Y|) < \beta m$ by our choice of $s$. 
		We claim that $|Y \cap I_1|,|X \cap I_s| \leq 2\beta m$. 
		Recall that $|I_1|,|I_s| \leq \lceil |X \cup Y|/r \rceil$. So if $|X \cup Y| \geq r$, then we have 
		$|I_1|,|I_s| \leq \lceil |X \cup Y|/r \rceil \leq 2|X \cup Y|/r \leq \frac{2m}{r} \leq 2\beta m$. Now suppose that $|X \cup Y| < r$. Then $|I_1|,|I_s| \leq 1$. As $|Y \cap I_1| \leq |I_1|/2$, it must be that $Y \cap I_1 = \emptyset$. Similarly, as $|Y \cap I_s| > \beta|I_s|$, it must be that $I_s \subseteq Y$, and hence $X \cap I_s = \emptyset$. So in both cases, $|Y \cap I_1|,|X \cap I_s| \leq 2\beta m$. 
		It follows that after adding the above sets to $S$, we have $|S| < 5\beta m$.
		
		For each of the sets $X_1,X_2,Y_1$, if it has size less than $\beta m$ then put its elements into $S$. After this step, we have $|S| < 8\beta m$. 
		
		Note that the bipartite graph $(Y_1,X_2)$ is empty. Also, the bipartite graph $(X_1,X_2)$ is complete because $Y \cap I_s \neq \emptyset$, implying that there is no $i \in I^+$ with $A_i \cap X_1, A_i \cap X_2 \neq \emptyset$. 
		Setting $X'_1 := X \cap I_1 \subseteq X_1$ and $Y'_1 := Y \cap I_s \subseteq Y_1$, observe that $X'_1$ is complete to $Y_1$ and $Y'_1$ is complete to $X_1$. 
		
		Suppose that $|X_1|,|Y_1| \geq \beta m$. 
		Then we have $|X'_1| \geq |I_1|/2 \geq \frac{|X \cup Y|}{2r} \geq 
		\frac{2\beta m}{2r} \geq \beta^2 m/2$. Also,
		$s \geq 2$ (because else $Y_1$ is empty) and $|Y'_1| > \beta |I_s|$, hence $I_s \neq \emptyset$. 
		So we have
		$|Y'_1| > \beta |I_s| \geq
		\frac{\beta|X \cup Y|}{2r} \geq \frac{\beta^2 m}{r} \geq 
		\beta^3 m/2$, where the second inequality holds because $I_s \neq \emptyset$, so that either $|I_s| = \lceil |X \cup Y|/r \rceil$ or $|I_s| = \lfloor |X \cup Y|/r \rfloor \geq 1$.
		
		For each $i = 1,2$, we do the following. If $|Z_i| < \beta m$ then place $Z_i$ into $S$. Following this step, $|S| < 10\beta m$. If $|Z_i| > m$ then apply the induction hypothesis to 
		$G[Z_i]$ (with the same parameters $m$ and $\beta$) to obtain a partition $Z_i = S_i \cup U_{i,1} \cup \dots \cup U_{i,t_i}$ and a matching $\mathcal{M}_i \subseteq \binom{[t_i]}{2}$ satisfying Items 1-4. In particular, we have
		$|S_i| \leq 
		(2\big\lceil \frac{|Z_i|}{m} \big\rceil - 3) \cdot 10\beta m$. 
		Hence 
		$$|S_1| + |S_2| \leq \left(2\left\lceil \frac{|Z_1|}{m} \right\rceil + 2\left\lceil \frac{|Z_2|}{m} \right\rceil - 6\right) \cdot 10\beta m \leq 
		\left(2\left\lceil \frac{n}{m} \right\rceil - 4\right) \cdot 10\beta m,$$
		where the second inequality holds by the general inequality $\lceil x\rceil+\lceil y\rceil\leq \lceil x+y\rceil +1$.
		Add all elements of $S_1 \cup S_2$ into $S$. We now have 
		$|S| \leq (2\lceil \frac{n}{m} \rceil - 3) \cdot 10\beta m$, as required. 
		
		Our partition of $V(G) \setminus S$ consists of the parts 
		$U_{i,1} ,\dots, U_{i,t_i}$ (for $i = 1,2$ for which $|Z_i| > m$); of $Z_i$ for $i = 1,2$ with $\beta m \leq |Z_i| \leq m$; and of those sets among $X_1,Y_1,X_2$ which were not placed into $S$. 
		The matching $\mathcal{M}$ consists of $\mathcal{M}_1 \cup \mathcal{M}_2$ and the edge $\{X_1,Y_1\}$ (unless one of $X_1,Y_1$ was placed into $S$). It is easy to see that all requirements in Items 1-4 are satisfied. 
	\end{proof}

		In the following lemma we will consider a graph $G$ with a weight function $w : V(G) \rightarrow \mathbb{R}_{> 0}$. For a subset $A \subseteq V(G)$, we use the notation $w(A) = \sum_{v \in A}{w(v)}$.
	
	\begin{lemma}\label{lem:weighted_cograph_partition}
		Let $0 < \beta \leq 1/3$, let $G$ be a cograph and let $w : V(G) \rightarrow \mathbb{R}_{> 0}$ such that $\sum_{v \in V(G)}{w(v)} = 1$ and $w(v) \leq 1-\beta$ for every $v \in V(G)$. Then there is a partition $V(G) = \mathcal{I} \cup \mathcal{J} \cup \mathcal{L}$ such that $w(\mathcal{I}),w(\mathcal{J}) \geq \beta/2$, $w(\mathcal{L}) < \beta$, and the bipartite graph between $\mathcal{I}$ and $\mathcal{J}$ is complete or empty. 
	\end{lemma}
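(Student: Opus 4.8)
\emph{Plan.} The plan is to run an iterative ``peeling'' procedure that repeatedly descends into a heavy component (or co-component) of the cograph, while carrying along at each stage the two sets of vertices that were left behind at the two most recent descent steps. Carrying \emph{both} is the whole point: the join-type alternates between consecutive levels of the cograph decomposition, so the set discarded two levels ago is homogeneously joined with the right type to the current level and can be re-incorporated into the active family rather than permanently thrown away. This keeps the total weight ``in play'' above $1-\beta/2$ at every step, which is exactly what forces the final split to be balanced.

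First note $|V(G)|\ge 2$ (otherwise the unique vertex has weight $1>1-\beta$), and that all hypotheses and the conclusion are invariant under replacing $G$ by its complement, so we may assume $G$ is disconnected. The engine of the argument is the following ``balancing'' observation, which I would isolate as a sub-claim: if $\F$ is a family of at least two pairwise-disjoint nonempty vertex sets, pairwise homogeneously joined (every pair complete, or every pair empty), with $w(\bigcup\F)\ge 1-\beta/2$, then either $\F$ splits into two subfamilies each of weight $\ge\beta/2$ — greedily if all members are light, or by isolating the heaviest member otherwise, using $\beta\le 1/3$ so that $1-\beta/2-\beta\ge 1/2\ge\beta/2$ — or some single member has weight $>w(\bigcup\F)-\beta/2$. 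Given this, I maintain a triple $(B,D_c,D_e)$ with $V(G)=B\sqcup D_c\sqcup D_e$, where $G[B]$ is disconnected or co-disconnected with $|B|\ge 2$, $D_c$ is complete to $B$, $D_e$ is empty to $B$ (the empty set counting as both), and $w(D_c),w(D_e)<\beta/2$. The procedure is started with $D_c=\emptyset$ and $B$ a maximum-weight component of $G$, unless that component has weight $\le 1-\beta/2$, in which case the balancing observation applied directly to the components of $G$ already finishes the proof with $\mathcal{L}=\emptyset$.

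At each step, let $\tau$ be the decomposition type of $B$ (empty if $G[B]$ is disconnected, complete otherwise), decompose $G[B]$ into its ($\ge 2$) components or co-components, and let $\F$ be this family together with whichever of $D_c,D_e$ is $\tau$-joined to $B$. Then $w(\bigcup\F)=1-w(\text{the other }D)>1-\beta/2$, so the balancing observation applies. If it gives a split, output $\mathcal{I},\mathcal{J}$ from the two subfamilies and set $\mathcal{L}$ to be the ``other'' $D$; this is a valid answer because $w(\mathcal{L})<\beta/2<\beta$, the two parts have weight $\ge\beta/2$, and they are $\tau$-homogeneous to each other. Otherwise some component/co-component $B'$ of $B$ has weight $>1-\beta$ (hence $|B'|\ge 2$, and $B'$ is not the light carried set), so I descend: the new $B$ is $B'$, which has decomposition type $\overline{\tau}$; the new complete-side and empty-side carried sets are the old ones reshuffled — one stays put, the other absorbs $B\setminus B'$ together with the old $\tau$-joined $D$ — and a short computation shows both new carried sets still have weight $<\beta/2$, while the identity $V(G)=B\sqcup D_c\sqcup D_e$ is preserved. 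Since $|B|$ strictly decreases and stays $\ge 2$, the procedure terminates, and it can only terminate through the ``split'' branch.

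The main obstacle — and the reason this is not a one-line induction on $|V(G)|$ — is controlling the accumulated leftover: descending into the heavy piece naively would dump a fresh set of weight up to $\beta/2$ into $\mathcal{L}$ at every level, with no bound on the number of levels. The fix is exactly the two-set bookkeeping above, exploiting the alternation of join-types so that leftover from two levels back is recycled instead of discarded; the crux of the calculation is checking that this simultaneously keeps $w(D_c),w(D_e)<\beta/2$ and the weight of the active family above $1-\beta/2$ throughout.
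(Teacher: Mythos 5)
Your argument is correct --- I checked the two places where it could fail: the balancing dichotomy (if some member weighs at least $\beta/2$, isolate the heaviest one, whose complement in the family still weighs at least $\beta/2$ because no member exceeds $w(\bigcup\mathcal{F})-\beta/2$; otherwise accumulate greedily and use $1-\tfrac{3\beta}{2}\ge\tfrac12\ge\beta/2$, which is where $\beta\le 1/3$ enters), and the descent computation $w\bigl(D_\tau\cup(B\setminus B')\bigr)=w(B)+w(D_\tau)-w(B')<\beta/2$, which indeed preserves the invariant --- but it is organized differently from the paper's proof. The paper runs a single one-pass greedy: while the accumulated peeled weight is below $\beta$, it splits the remaining cograph into two homogeneously joined sides $(A,B)$ with $w(A)\le w(B)$ and peels $A$; once the peeled weight reaches $\beta$ it stops and finishes with a two-case endgame: either the last peeled set $A_k$ already has weight at least $\beta$ (then $\mathcal{I}=A_k$, $\mathcal{J}$ is the remaining core, and $\mathcal{L}$ is the union of the earlier peels, of weight less than $\beta$), or all peels together weigh less than $2\beta$, and grouping them by join type lets the heavier group serve as $\mathcal{I}$ against the core $\mathcal{J}$ (of weight greater than $1-2\beta\ge\beta$), with the lighter group as $\mathcal{L}$. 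So the paper also ``recycles'' the leftovers of one join type, but only once, at the very end, with a weight-threshold stopping rule playing the role of your balancing lemma and level-by-level invariant; it never needs to keep the two typed pools separate during the walk. Your version is longer to write out in full, but it buys a slightly stronger conclusion ($w(\mathcal{L})<\beta/2$ rather than $<\beta$) and makes the recycling mechanism explicit, whereas the paper's threshold-$\beta$ bookkeeping collapses the whole analysis into one final case distinction.
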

	\begin{proof}
		Define sets $A_1,A_2,\dots$ as follows. 
		For each $i \geq 0$, if 
		$w(A_1 \cup \dots \cup A_i) \geq \beta$ then stop. Otherwise let $A \cup B$ be a partition of $V(G) \setminus (A_1 \cup \dots \cup A_i)$ such that the bipartite graph $(A,B)$ is complete or empty and $w(A) \leq w(B)$; set $A_{i+1} = A$.  Such a partition $(A,B)$ exists because if $w(A_1 \cup \dots \cup A_i) < \beta$, then there are at least two vertices outside of $A_1 \cup \dots \cup A_i$ (since $w(v) \leq 1-\beta$ for every $v \in V(G)$). This process has to stop at some point. 
		Suppose that the process stopped after $k$ steps, and consider the sets $A_1,\dots,A_k$. By definition, we have 
		$w(A_1 \cup \dots \cup A_{k-1}) < \beta$ and 
		$w(A_1 \cup \dots \cup A_k) \geq \beta$. For each $1 \leq i \leq k$, the bipartite graph between $A_i$ and $V(G) \setminus (A_1 \cup \dots \cup A_i)$ is either complete or empty. 
		
		We consider two cases. If $w(A_k) \geq \beta$, then $\mathcal{I} = A_k, \mathcal{J} = V(G) \setminus (A_1 \cup \dots \cup A_k), \mathcal{L} = A_1 \cup \dots \cup A_{k-1}$ satisfy the assertion of the lemma. Note that $w(\mathcal{J}) \geq w(\mathcal{I})$ by construction. Suppose now that $w(A_k) < \beta$. Then $w(A_1 \cup \dots \cup A_k) < 2\beta$. Let $I^+$ be the set of $1 \leq i \leq k$ for which the bipartite graph between $A_i$ and $V(G) \setminus (A_1 \cup \dots \cup A_i)$ is complete, and let $I^-$ be the set of $1 \leq i \leq k$ for which this bipartite graph is empty. Without loss of generality, $w(\bigcup_{i \in I^+}{A_i}) \geq 
		w(\bigcup_{i \in I^-}{A_i})$. Now set $\mathcal{I} = \bigcup_{i \in I^+}{A_i}$, $\mathcal{J} = V(G) \setminus (A_1 \cup \dots \cup A_k)$ and $\mathcal{L} = \bigcup_{i \in I^-}{A_i}$. Then $w(\mathcal{I}) \geq w(A_1 \cup \dots \cup A_k)/2 \geq \beta/2$, $w(\mathcal{J}) > 1 - 2\beta \geq \beta$ and $w(\mathcal{L}) \leq w(A_1 \cup \dots \cup A_k)/2 < \beta$. Also, the bipartite graph between $\mathcal{I}$ and $\mathcal{J}$ is complete. 
	\end{proof}
	\noindent
	The following is the main lemma of the paper, and will be used in the proofs of Theorems \ref{thm:removal} and \nolinebreak \ref{thm:EH}. 
		\begin{lemma}\label{lem:main}
		For every $\xi,\varepsilon > 0$ there are 
		$\delta = \delta(\xi,\varepsilon) = \poly(\xi\varepsilon) > 0$ and 
		$n_{\ref{lem:main}} = n_{\ref{lem:main}}(\xi,\varepsilon) = \poly(\frac{1}{\xi\varepsilon})$  
		such that the following holds. Let $H$ be a $3$-uniform hypergraph on $n \geq n_{\ref{lem:main}}$ vertices, let $v \in V(H)$, and suppose that 
		\begin{enumerate}
		    \item $\xi \leq d(L(v)) \leq 1 - \xi$,
		    \item  $H$ contains less than $\delta n^4$ induced copies of $D_2$,
		    \item $H$ contains less than $\delta n^3$ induced copies of $D_2$ containing $v$.
		\end{enumerate}
		  Then there is a partition $V(H) = \mathcal{X} \cup \mathcal{Y} \cup \mathcal{S}$ such that
		$|\mathcal{X}|,|\mathcal{Y}| \geq \frac{\xi}{10}n$, $|\mathcal{S}| \leq \frac{\xi}{2}n$ and $(\mathcal{X},\mathcal{Y})$ is $\varepsilon$-homogeneous. 
	\end{lemma}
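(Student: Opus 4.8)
The plan is to push everything onto the link graph $L(v)$, which is forced to be close to a cograph, and then to read off a homogeneous pair from its cotree.

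\emph{The link graph is nearly a cograph.} Revisiting the proof of Lemma~\ref{lem:cograph_links}: if $a,b,c,d$ is an induced $P_4$ in $L(v)$, then either one of the $4$-sets $\{v,a,b,c\},\{v,b,c,d\},\{v,a,b,d\},\{v,a,c,d\}$ spans an induced copy of $D_2$ containing $v$, or $\{a,b,c,d\}$ itself spans an induced copy of $D_2$. A fixed induced $D_2$ through $v$ arises this way from at most $O(n)$ ordered tuples (three of the four vertices are pinned, the fourth is free), and a fixed induced $D_2$ avoiding $v$ from at most $O(1)$ ordered tuples. So, by hypotheses (2) and (3), $L(v)$ has at most $O(\delta n^4)$ induced copies of $P_4$. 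Provided $\delta$ is a small enough polynomial in a parameter $\zeta=\zeta(\xi,\varepsilon)$ (fixed at the end to be a small polynomial in $\xi\varepsilon$) and $n\ge n_{\ref{lem:main}}$, Theorem~\ref{lem:P_4} yields a cograph $G$ on $V(H)\setminus\{v\}$ with $|E(G)\bigtriangleup E(L(v))|\le\zeta n^2$. Since altering $\zeta n^2$ pairs moves the density by $O(\zeta)$, hypothesis (1) gives $d(G)\in[\xi/2,1-\xi/2]$; in particular $G$ has at least $\tfrac{\xi}{8}n^2$ non-edges.

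\emph{A separating triple in $G$ (the crux).} I would locate disjoint sets $\mathcal{X}_0,\mathcal{Y}_0,Z$ that cover all but a small linear part of $V(G)$, with $|\mathcal{X}_0|,|\mathcal{Y}_0|\ge c\xi n$ for an absolute constant $c$ and $\eta n\le|Z|\le\tfrac{\xi}{4}n$ for some $\eta=\poly(\xi)$, such that $(\mathcal{X}_0,Z)$ is complete in $G$, $(\mathcal{Y}_0,Z)$ is empty in $G$, and $(\mathcal{X}_0,\mathcal{Y}_0)$ is complete or empty in $G$. After possibly replacing $H$ by its complement (which complements $L(v)$, fixes the number of induced $D_2$'s since $D_2$ is self-complementary, and preserves $\varepsilon$-homogeneity), assume the root of the cotree is a series node, so $G$ is the complete join of cographs $G_1,\dots,G_k$ on the leaf-sets of the root's children, and every non-edge lies inside some $G_i$. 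A weighted pigeonhole argument picks a child $c_{i_0}$ with $|V(G_{i_0})|\ge\tfrac{\xi}{4}n$ and $d(G_{i_0})\le 1-\tfrac{\xi}{4}$; being a child of a series node, $G_{i_0}$ has a parallel root, so $G_{i_0}=D_1\sqcup\cdots\sqcup D_l$. When $V(G)\setminus V(G_{i_0})$ is still linear in $n$, I would take $Z$ to be a union of the $D_j$'s of total size in $[\eta n,\tfrac{\xi}{4}n]$ (grouping the small parts, or taking one medium part), $\mathcal{Y}_0=V(G_{i_0})\setminus Z$, and $\mathcal{X}_0=V(G)\setminus V(G_{i_0})$: then $\mathcal{X}_0$ is complete to $V(G_{i_0})\supseteq\mathcal{Y}_0\cup Z$ via the series root, $Z$ is empty to $\mathcal{Y}_0$ since they lie in distinct parts of the disjoint union $G_{i_0}$, and the size constraints hold because $|V(G_{i_0})|\ge\tfrac{\xi}{4}n$. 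If instead $|V(G_{i_0})|$ is too close to $n$, I would iterate the same analysis inside $G_{i_0}$ while accumulating the ``outside'' vertices (which stay complete to the current subcograph, and so can be absorbed into $\mathcal{X}_0$ or, when necessary, into the leftover), stopping once the subcograph has a linear-sized complement; the density hypothesis is maintained because at each step one descends into the part carrying a constant fraction of the non-edges.

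\emph{Transfer to $H$ and conclusion.} Because $|\mathcal{X}_0|,|\mathcal{Y}_0|\ge c\xi n$ and $|Z|\ge\eta n$, replacing $G$ by $L(v)$ distorts each of $d(\mathcal{X}_0,Z),d(\mathcal{Y}_0,Z),d(\mathcal{X}_0,\mathcal{Y}_0)$ by at most $O(\zeta/(\xi\eta))$, so for $\zeta$ a small enough polynomial in $\xi\varepsilon$ we obtain $d_{L(v)}(\mathcal{X}_0,Z)\ge 1-\varepsilon^2/80$, $d_{L(v)}(\mathcal{Y}_0,Z)\le\varepsilon^2/80$, and $d_{L(v)}(\mathcal{X}_0,\mathcal{Y}_0)$ either $\ge 1-\varepsilon^2/80$ or $\le\varepsilon^2/80$. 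I then apply Lemma~\ref{lem:homogeneous} with $\gamma=\varepsilon$, the vertex $v$, $X=\mathcal{X}_0$, $Y=\mathcal{Y}_0$, and this $Z$ (using the branch matching the value of $d_{L(v)}(\mathcal{X}_0,\mathcal{Y}_0)$; note $|\mathcal{X}_0|,|\mathcal{Y}_0|\ge 2/\varepsilon$ for $n\ge n_{\ref{lem:main}}$). Since $|\mathcal{X}_0|,|\mathcal{Y}_0|,|Z|$ are all at least $\poly(\xi)\cdot n$, Items~2 and~3 of that lemma would force at least $\poly(\xi\varepsilon)\cdot n^4$ induced copies of $D_2$, respectively at least $\poly(\xi\varepsilon)\cdot n^3$ induced copies of $D_2$ through $v$; choosing $\delta$ a small enough polynomial in $\xi\varepsilon$ contradicts hypotheses (2) and (3). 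Hence Item~1 holds, i.e. $(\mathcal{X}_0,\mathcal{Y}_0)$ is $\varepsilon$-homogeneous in $H$, and taking $\mathcal{X}=\mathcal{X}_0$, $\mathcal{Y}=\mathcal{Y}_0$, $\mathcal{S}=V(H)\setminus(\mathcal{X}\cup\mathcal{Y})$ finishes the proof, with $|\mathcal{X}|,|\mathcal{Y}|\ge c\xi n\ge\tfrac{\xi}{10}n$ and $|\mathcal{S}|\le|Z|+(\text{leftover})\le\tfrac{\xi}{2}n$ after tuning constants.

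The step I expect to be the main obstacle is the construction of the separating triple: one must turn the cotree of a cograph with many non-edges into a \emph{medium-sized} module $Z$ whose complete-side and empty-side are both linear in $n$ and mutually homogeneous, and the series/parallel alternation down the cotree---together with the possibility that essentially all non-edges sit inside a near-spanning subcograph---forces a somewhat delicate (though elementary) iteration. Extracting the precise constants $\tfrac{\xi}{10}$ and $\tfrac{\xi}{2}$ rather than merely $\poly(\xi)$ is a further bookkeeping nuisance that I expect to be absorbed into the choices of $c$, $\eta$, and the greedy grouping of parts.
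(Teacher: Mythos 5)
Your first and last steps are sound and match the paper: the counting that shows $L(v)$ has $O(\delta n^4)$ induced $P_4$'s, the use of Theorem \ref{lem:P_4} to replace $L(v)$ by a cograph $G$, and (granting the triple) the transfer of densities from $G$ back to $L(v)$ followed by Lemma \ref{lem:homogeneous} would all go through with polynomial parameter choices. The gap is exactly at the step you flagged as the crux, and it is not merely a bookkeeping issue: the ``separating triple'' you posit need not exist. Your plan needs \emph{one} set $Z$ that distinguishes the \emph{entire} pair $(\mathcal{X}_0,\mathcal{Y}_0)$ (complete to one side, empty to the other), with $(\mathcal{X}_0,\mathcal{Y}_0)$ itself nearly homogeneous in $G$ and with $\mathcal{X}_0\cup\mathcal{Y}_0\cup Z$ covering all but $\le \frac{\xi}{2}n$ vertices. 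Take $G=(A_1\sqcup A_2)+(B_1\sqcup B_2)$, four cliques of size $n/4$, so $d(G)\approx 3/4$ and the density hypothesis holds for any small $\xi$. Here your own recipe already breaks: the chosen child $G_{i_0}=A_1\sqcup A_2$ has only two components, each of size $n/4>\frac{\xi}{4}n$, so no union of components has size in $[\eta n,\frac{\xi}{4}n]$, and $|V(G_{i_0})|=n/2$ is not ``close to $n$'', so the fallback iteration is never triggered. Worse, no repair within the single-distinguisher paradigm is possible: if $Z$ meets two components of $G_1$ (or both sides of the join) in non-negligible proportion, nothing is empty to $Z$, so $\mathcal{Y}_0$ cannot have size $\frac{\xi}{10}n$; and if $Z$ sits essentially inside one block, say $Z\subseteq A_2$, then $\mathcal{Y}_0\subseteq A_1$ while $\mathcal{X}_0\subseteq (A_2\setminus Z)\cup B_1\cup B_2$, and near-homogeneity of $(\mathcal{X}_0,\mathcal{Y}_0)$ forces $\mathcal{X}_0$ to lie almost entirely in $A_2\setminus Z$ or almost entirely in $B_1\cup B_2$, leaving at least roughly $n/4$ vertices uncovered --- far beyond the $\frac{\xi}{2}n$ leftover budget when $\xi$ is small (and these leftovers cannot be absorbed into $\mathcal{X}$ or $\mathcal{Y}$ without destroying $\varepsilon$-homogeneity, since their size is $\Theta(n)\gg \varepsilon\xi n$). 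A smaller inaccuracy in the same step: when you descend through a parallel node, the discarded sibling components are \emph{empty}, not complete, to the current subcograph, so the claim that accumulated outside vertices ``stay complete'' and can be absorbed into $\mathcal{X}_0$ is false, and dumping them into the leftover at each level has no a priori bound compatible with the $\frac{\xi}{2}n$ budget.

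This obstruction is precisely why the paper does not apply Lemma \ref{lem:homogeneous} once to the full pair. Instead it first partitions $G$ into many parts of size $\approx\alpha n$ via Lemma \ref{lem:cograph_partition_main} (with the matching $\mathcal{M}$ of exceptional pairs), groups the parts into equivalence classes by their adjacency pattern, and builds the two sides $\mathcal{X},\mathcal{Y}$ from whole classes via Lemma \ref{lem:weighted_cograph_partition}. Then for each cross pair $(V_i,V_j)$ with $V_i\subseteq\mathcal{X}$, $V_j\subseteq\mathcal{Y}$, a \emph{pair-dependent} distinguisher $U_k$ exists (because $i\not\sim j$), so Lemma \ref{lem:homogeneous} applies part-by-part, and Lemma \ref{lem:pairs_to_triple} upgrades this to triples $(V_i,V_j,V_k)$ before the densities are summed to give $\varepsilon$-homogeneity of $(\mathcal{X},\mathcal{Y})$. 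In my example above this is exactly what happens: the output pair is $(A_1\cup A_2,\,B_1\cup B_2)$, which admits no global distinguisher, but every small cross pair is distinguished by a part from the opposite $A$- or $B$-block. To salvage your write-up you would need to replace the single-triple extraction by some such localized argument, i.e.\ essentially reconstruct the paper's partition-and-aggregate machinery.
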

	As the proof of Lemma \ref{lem:main} is somewhat technical, let us give a rough outline. Fist, we apply Theorem \ref{lem:P_4} to $L(v)$ to turn it into a cograph $G$ using few edge additions/deletions. This is possible because $L(v)$ contains few induced $P_4$'s because of Lemma \ref{lem:cograph_links} and Items 2-3 in Lemma \ref{lem:main}.
	Then we apply Lemma \ref{lem:cograph_partition_main} to the cograph $G$ to obtain the partition $S \cup V_1 \cup \dots \cup V_t$. We consider the reduced graph on $[t]$; in this graph, $\{i,j\}$ is an edge if $(V_i,V_j)$ is a complete bipartite graph, and a non-edge if it is an empty bipartite graph. For pairs $\{i,j\} \in \mathcal{M}$, we use Item 4 in Lemma \ref{lem:cograph_partition_main} to define the adjacency. We group $1,\dots,t$ into groups $I_1,\dots,I_r$ such that if $i,j$ are in the same group then they have the same relation to all other vertices in the reduced graph. 
	Each group $I_a$ forms a clique or an independent set in the reduced graph. Consequently, no single group $I_a$ can span almost all of the vertices of $G$, because otherwise $G$ (and hence also $L(v)$) would have density very close to $0$ or $1$, contradicting Item 1 in the lemma. 
	If $i,j$ belong to different groups, then there is some $k$ such that $i,j$ relate differently to $k$, and then we can use Lemma \ref{lem:homogeneous} to deduce that $(V_i,V_j)$ is $\gamma$-homogeneous for an appropriate $\gamma$. (This argument does not apply if $\{i,j\} \in \mathcal{M}$, but there are very few such pairs, and so their contribution is negligible.). Then, using Lemma \ref{lem:pairs_to_triple}, we can deduce that almost all triples $(V_i,V_j,V_k)$ are $\varepsilon$-homogeneous. This already gives us a lot of structure on the hypergraph $H$. As a final step, we group $V_1,\dots,V_t$ into just two groups, which will correspond to the sets $\mathcal{X},\mathcal{Y}$ in the statement of Lemma \ref{lem:main}, by applying Lemma \ref{lem:weighted_cograph_partition} to the reduced graph (or, more precisely, to the graph on $I_1,\dots,I_r$ derived from the reduced graph). The full proof follows. 
	\begin{proof}[Proof of Lemma \ref{lem:main}]
		Set 
		$$
		\alpha:= \frac{\xi^2\varepsilon}{800}, 
		\; \; \; 
		\beta:= \frac{\xi}{240},
		\; \; \;
		\gamma:= \frac{\varepsilon^2}{32},
		\; \; \;
		\zeta:= \frac{\alpha^2\beta^4\gamma^2}{160},
		$$
		$$
		\delta:= \min\left\{ \frac{1}{4} \cdot \delta_{\text{cograph}}(\zeta), \;
		\frac{\gamma^2}{128} \cdot \frac{\alpha^4\beta^{6}}{4}, \;
		\frac{\varepsilon^2}{128} \cdot (\alpha\beta)^4 
		\right\}.
		$$
		Here, $\delta_{\text{cograph}}$ is from Theorem \ref{lem:P_4}. 
		
		We claim that $L(v)$ has at most $4 \delta (n-1)^4$ induced copies of $P_4$. Indeed, for each copy $X$ of $P_4$, it holds by Lemma \ref{lem:cograph_links} that $X \cup \{v\}$ contains an induced copy of $D_2$. Hence, either $X$ spans an induced copy of $D_2$, or there is an induced copy of $D_2$ consisting of $v$ and $3$ vertices from $X$. If at least half of the copies $X$ of $P_4$ are of the first kind, then we get $2\delta (n-1)^4 \geq \delta n^4$ induced copies of $D_2$ in $H$, a contradiction. And if at least half are of the second kind, then we get at least $2\delta (n-1)^3 \geq \delta n^3$ induced copies of $D_2$ containing $v$, again giving a contradiction. 
		
		By our choice of $\delta$ via Theorem \ref{lem:P_4}, we get that $L(v)$ can be turned into a cograph $G$ by adding/deleting at most $\zeta (n-1)^2$ edges. Apply Lemma \ref{lem:cograph_partition_main} to $G$ with $m := \lceil \alpha (n-1) \rceil$ and $\beta$ as above, to obtain a partition $V(G) = S \cup V_1 \cup \dots \cup V_t$ and a matching $\mathcal{M} \subseteq \binom{[t]}{2}$ satisfying Items 1-4 in that lemma. 
		Define a graph $K$ on $[t]$ as follows. For $1 \leq i < j \leq t$ with $\{i,j\} \notin \mathcal{M}$, let $\{i,j\} \in E(K)$ if the bipartite graph $(V_i,V_j)$ is complete and $\{i,j\} \notin E(K)$ if the bipartite graph $(V_i,V_j)$ is empty. By Item 3 in Lemma \ref{lem:cograph_partition_main}, one of these options holds. Suppose now that $\{i,j\} \in \mathcal{M}$, and let $V'_i \subseteq V_i, V'_j \subseteq V_j$ be as in Item 4 of Lemma \ref{lem:cograph_partition_main}. If Item 4(a) in Lemma \ref{lem:cograph_partition_main} holds then let $\{i,j\} \in E(K)$, and if Item 4(b) holds then let $\{i,j\} \notin E(K)$. 
		
		Define the relation $\sim$ on $[t]$ by letting $i\sim j$ if and only if for every $k \in [t] \setminus \{i,j\}$ it holds that $\{i,k\} \in E(K)$ if and only if $\{j,k\} \in E(K)$. Then $\sim$ is an equivalence relation. Let $I_1,\dots,I_r$ be the equivalence classes of $\sim$; so $[t] = I_1 \cup \dots \cup I_r$. 
		Note that for every $1 \leq a < b \leq r$, the bipartite graph between $I_a$ and $I_b$ in $K$ is complete or empty.  
		Let $F$ be the corresponding reduced graph on $[r]$; that is, $\{a,b\} \in E(F)$ if the bipartite graph between $I_a$ and $I_b$ is complete and $\{a,b\} \notin E(F)$ if this bipartite graph is empty.
		
		Let us define sets $U_i \subseteq V_i$, $i \in [t]$, as follows. If there is $j$ such that $\{i,j\} \in \mathcal{M}$, then take $U_i$ to be the set $V'_i$ from Item 4 in Lemma \ref{lem:cograph_partition_main}, and otherwise take $U_i = V_i$. 
		Observe that by the definition of $K$ and $F$, for all $1 \leq a \neq b \leq r$ and $i \in I_a, j \in I_b$ we have that $(U_i,V_j)$ is complete in $G$ if $\{a,b\} \in E(F)$ and $(U_i,V_j)$ is empty in $G$ if $\{a,b\} \notin E(F)$. Moreover, if $\{i,j\} \notin \mathcal{M}$ then the same is true for $(V_i,V_j)$; namely, $(V_i,V_j)$ is complete in $G$ if $\{a,b\} \in E(F)$ and $(V_i,V_j)$ is empty in $G$ if $\{a,b\} \notin E(F)$.
		Note also that by Items 2 and 4 in Lemma \ref{lem:cograph_partition_main}, and as $m \geq \alpha (n-1)$, we have
		\begin{equation}\label{eq:V_i U_i}
		|V_i| \geq \alpha\beta(n-1), 
		\; \; \;
		|U_i| \geq \alpha\beta^3 (n-1)/2
		\end{equation}
		for every $i \in [t]$. 
		
		\begin{claim}\label{claim:cograph_pattern}
			$F$ is a cograph.
		\end{claim}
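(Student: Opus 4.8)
The plan is to realize $F$ as an induced subgraph of $G$. Since $G$ is a cograph and the class of cographs is hereditary (closed under induced subgraphs — this is immediate from the recursive definition, or from the characterization of cographs as induced-$P_4$-free graphs recalled above), this will show that $F$ is a cograph.

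First, for each equivalence class $I_a$ with $a \in [r]$, I would fix an arbitrary representative $i_a \in I_a$ and then an arbitrary vertex $u_a \in U_{i_a}$; such a vertex exists since $|U_{i_a}| \geq \alpha\beta^3(n-1)/2 > 0$ by \eqref{eq:V_i U_i}. The vertices $u_1, \dots, u_r$ are pairwise distinct, because $u_a \in U_{i_a} \subseteq V_{i_a}$, the indices $i_1, \dots, i_r$ lie in distinct $\sim$-classes and hence are distinct, and the sets $V_1, \dots, V_t$ are pairwise disjoint.

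The main step is to check that the map $u_a \mapsto a$ is an isomorphism from $G[\{u_1, \dots, u_r\}]$ onto $F$. So fix $a \neq b$ in $[r]$. Then $u_a \in U_{i_a}$ and $u_b \in V_{i_b}$, with $i_a \in I_a$, $i_b \in I_b$ and $a \neq b$; hence, by the observation recorded immediately before the claim (describing the relation between the $U_i$, $V_j$ and $F$), the pair $(U_{i_a}, V_{i_b})$ is complete in $G$ if $\{a,b\} \in E(F)$ and empty in $G$ if $\{a,b\} \notin E(F)$. In the former case $u_a u_b \in E(G)$ and in the latter $u_a u_b \notin E(G)$, which is exactly what the claimed isomorphism requires. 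Consequently $F$ is isomorphic to an induced subgraph of the cograph $G$, and therefore $F$ is itself a cograph.

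There is no genuine obstacle in this argument; the only subtlety is to draw the representatives $u_a$ from the sets $U_i$ rather than from the $V_i$, so that the homogeneity of $(U_{i_a}, V_{i_b})$ holds even when $i_a$ or $i_b$ happens to be matched under $\mathcal{M}$ — indeed $U_i$ was defined precisely so as to behave well with respect to $\mathcal{M}$.
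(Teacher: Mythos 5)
Your proof is correct and follows essentially the same idea as the paper's: both rely on the observation that for $i\in I_a$, $j\in I_b$ with $a\neq b$ the pair $(U_i,V_j)$ is complete or empty in $G$ according to whether $\{a,b\}\in E(F)$, together with the induced-$P_4$-free characterization of cographs. The only packaging difference is that you embed all of $F$ as an induced subgraph of $G$ via one representative $u_a\in U_{i_a}$ per class and invoke heredity, whereas the paper transfers only a hypothetical induced $P_4$ of $F$ into $G$ to get a contradiction; these are the same argument in substance.
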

		\begin{proof}
			Suppose, for the sake of contradiction, that $(a,b,c,d)$ is an induced path in $F$ for some $a,b,c,d \in [r]$. Fix $i \in I_a, j \in I_b, k \in I_c, \ell \in I_d$. We saw above that the bipartite graphs $(U_i,U_j),(U_j,U_k),(U_k,U_{\ell})$ are complete in $G$ and the bipartite graphs $(U_i,U_k),(U_j,U_{\ell}),(U_i,U_{\ell})$ are empty in $G$. It follows that $G$ contains an induced path on 4 vertices (every quadruple in $U_i \times U_j \times U_k \times U_{\ell}$ spans such a path), in contradiction to $G$ being a cograph.  
		\end{proof} 
		\noindent
		For $a \in [r]$, let $W_a := \bigcup_{i \in I_a}{V_i}$. Note that $V(G) = W_1 \cup \dots \cup W_r \cup S$.
		\begin{claim}\label{claim:non-trivial_partition}
			For every $1 \leq a \leq r$, $|W_a| \leq (1 - \frac{\xi}{3})(n-1)$. 
		\end{claim}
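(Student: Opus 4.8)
The plan is a density argument: if some $W_a$ contained almost all of $V(G)\setminus\{v\}$, then $G$ — and hence $L(v)$ — would be almost complete or almost empty, contradicting Item 1.

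First I would record that each equivalence class $I_a$ induces a clique or an independent set in $K$. This is immediate from the definition of $\sim$: given $i,j,i',j'\in I_a$ with $\{i,j\}\neq\{i',j'\}$, either the two pairs share a vertex, or they are disjoint and then $i\sim i'$ (testing the vertex $j$) followed by $j\sim j'$ (testing the vertex $i'$) shows $\{i,j\}\in E(K)\iff\{i',j'\}\in E(K)$; so all intra-$I_a$ pairs have the same status in $K$. Now fix $a$ and suppose for contradiction that $|W_a|>(1-\tfrac{\xi}{3})(n-1)$. I will treat the case where $I_a$ is a clique in $K$; the independent-set case follows verbatim after replacing $G$ (and $L(v)$) by its complement.

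Next I would bound the number of non-edges of $G$ lying inside $W_a$. Any such non-edge either lies inside a single part $V_i$, or joins two parts $V_i,V_j$ with $i,j\in I_a$; in the latter case we must have $\{i,j\}\in\mathcal{M}$, since for $\{i,j\}\notin\mathcal{M}$ the pair $(V_i,V_j)$ is complete in $G$ (as $I_a$ is a clique in $K$). Non-edges of the first type number at most $\sum_i\binom{|V_i|}{2}\le\frac{m}{2}\sum_i|V_i|\le\frac{m}{2}(n-1)\le\alpha(n-1)^2$, using $|V_i|\le m\le\alpha(n-1)+1$ and $\sum_i|V_i|\le n-1$; those of the second type number at most $m\sum_i|V_i|\le m(n-1)\le 2\alpha(n-1)^2$, since $\mathcal{M}$ restricted to $I_a$ is a matching. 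Hence $G$ has at most $3\alpha(n-1)^2$ non-edges inside $W_a$, so
\[ e(G)\ge\binom{|W_a|}{2}-3\alpha(n-1)^2\ge\Big(1-\tfrac{2\xi}{3}-\tfrac{2}{n-1}\Big)\binom{n-1}{2}-3\alpha(n-1)^2, \]
where the last step uses $|W_a|\ge(1-\tfrac{\xi}{3})(n-1)$ and $(1-\tfrac{\xi}{3})^2\ge 1-\tfrac{2\xi}{3}$.

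Finally, since $G$ arises from $L(v)$ by adding/deleting at most $\zeta(n-1)^2$ edges, $e(L(v))\ge e(G)-\zeta(n-1)^2$, so
\[ d(L(v))=\frac{e(L(v))}{\binom{n-1}{2}}\ge 1-\tfrac{2\xi}{3}-\tfrac{2}{n-1}-4(3\alpha+\zeta). \]
Because $\alpha=\tfrac{\xi^2\varepsilon}{800}$ and $\zeta$ are tiny compared to $\xi$ (one may assume $\varepsilon\le 1$ without loss, as $\varepsilon$-homogeneity is weaker for larger $\varepsilon$), and because $n\ge n_{\ref{lem:main}}$ can be taken large enough — polynomial in $1/(\xi\varepsilon)$ — that $\tfrac{2}{n-1}<\tfrac{\xi}{60}$, the three error terms sum to less than $\tfrac{\xi}{3}$, giving $d(L(v))>1-\xi$, contradicting Item 1. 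The independent-set case yields $d(L(v))<\xi$ in the same way, again contradicting Item 1. The only point requiring care is the handling of the matching pairs $\mathcal{M}$, which are not controlled by $K$ inside $W_a$; but since $\mathcal{M}$ is a matching and every $|V_i|\le m=O(\alpha n)$, their total contribution to the non-edge count is $O(\alpha n^2)$, which is absorbed by the choice of $\alpha$ and $\zeta$ fixed at the start of the proof.
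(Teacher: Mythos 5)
Your proposal is correct and follows essentially the same route as the paper: observe that $I_a$ is a clique or independent set in $K$, bound the non-edges of $G$ attributable to $W_a$ (those inside single parts and those across matching pairs of $\mathcal{M}$, each $O(\alpha n^2)$), transfer to $L(v)$ using the $\zeta(n-1)^2$ edit bound, and contradict $\xi \le d(L(v)) \le 1-\xi$. The only cosmetic differences are slightly looser constants and that you lower-bound $e(G)$ via $\binom{|W_a|}{2}$ rather than upper-bounding non-edges touching $V(G)\setminus W_a$, which amounts to the same count.
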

		\begin{proof}
			Suppose, for the sake of contradiction, that $|W_a| \geq (1-\frac{\xi}{3})(n-1)$. 
			Observe that $I_a$ is either a clique or an independent set in $K$. Let us assume that it is a clique (the other case can be handled symmetrically).
			This means that $\{i,j\} \in E(K)$ for all $i,j \in I_a$. By the definition of $K$, we have that $(V_i,V_j)$ is complete in $G$ for all $\{i,j\} \in \binom{I_a}{2} \setminus \mathcal{M}$. 
			By Item 2 in Lemma \ref{lem:cograph_partition_main} we have that $|V_i| \leq \lceil \alpha(n-1)\rceil$ for every $1 \leq i \leq t$. So $$\sum_{i \in I_a}{\binom{|V_i|}{2}} \leq \frac{n-1}{2} \cdot \lceil \alpha(n-1)\rceil \leq \alpha(n-1)^2$$ 
			and
			\begin{equation}\label{eq:matching_pairs sum}
			\sum_{\{i,j\} \in \mathcal{M}}{|V_i||V_j|} \leq \lceil \alpha(n-1) \rceil \cdot \sum_{\{i,j\} \in \mathcal{M}}{\min\{|V_i|,|V_j|\}} \leq \lceil \alpha(n-1) \rceil \cdot \frac{n-1}{2} \leq \alpha(n-1)^2.
			\end{equation}
			Moreover, the number of non-edges in $G$ which touch $V(G) \setminus W_a$ is at most $|V(G) \setminus W_a| \cdot (n-2) \leq \frac{\xi}{3}(n-1)^2$. 
			It follows that the total number of non-edges in $G$ is at most 
			$(\frac{\xi}{3} + 2\alpha)(n-1)^2$.
			Since $G$ and $L(v)$ differ on at most $\zeta (n-1)^2$ edges, the total number of non-edges in $L(v)$ is at most 
			$(\frac{\xi}{3} + 2\alpha + \zeta)(n-1)^2
			< \xi \binom{n-1}{2}$.
			Hence, $d(L(v)) > 1 - \xi$, contradicting condition 1.
		\end{proof}
		We will apply Lemma \ref{lem:weighted_cograph_partition} to the cograph $F$. Define the weight function $w$ on $[r] = V(F)$ by 
		$w(i) = \frac{|W_i|}{|W_1| + \dots + |W_r|}$. 
		Note that $|W_1| + \dots + |W_r| = n-1-|S| \geq (1 - 20\beta)(n-1)$, where the inequality holds by Item 1 of Lemma \ref{lem:cograph_partition_main}. 
		By Claim \ref{claim:non-trivial_partition}, we have 
		$w(i) \leq  
		\frac{1-\frac{\xi}{3}}{1 - 20\beta} \leq 1 - \frac{\xi}{4}$, 
		where in the last inequality we used our choice of $\beta$. 
		Apply Lemma \ref{lem:weighted_cograph_partition} to the cograph $F$ with parameter $\frac{\xi}{4}$ to obtain a partition $V(F) = [r] = \mathcal{I} \cup \mathcal{J} \cup \mathcal{L}$ as in that lemma. 
		Set $\mathcal{X} = \bigcup_{a \in \mathcal{I}}{W_a}$, 
		$\mathcal{Y} = \bigcup_{a \in \mathcal{J}}{W_a}$ and 
		$\mathcal{S} = \bigcup_{a \in \mathcal{L}}{W_a}$. Note that
		$|\mathcal{S}| \leq \xi n/4$ and 
		$|\mathcal{X}|,|\mathcal{Y}| \geq \frac{\xi}{8} \cdot (|W_1| + \dots + |W_r|) \geq \frac{\xi}{8}(1 - 20\beta)(n-1) \geq \frac{\xi}{8}(1 - 21\beta)n \geq \frac{\xi}{10}n$. Place the elements of $S \cup \{v\}$ into $\mathcal{S}$. After this step, we have $|\mathcal{S}| \leq \frac{\xi}{4}n + 20\beta n + 1 \leq \frac{\xi}{2} n$. 
		
		As guaranteed by Lemma \ref{lem:weighted_cograph_partition}, the bipartite graph between $\mathcal{I}$ and $\mathcal{J}$ (in $F$) is either complete or empty; suppose without loss of generality that it is complete (the other case can be handled symmetrically). 
		Set $I = \bigcup_{a \in \mathcal{I}}{I_a}$ and $J = \bigcup_{a \in \mathcal{J}}{I_a}$. In other words, $I$ is the set of $i \in [t]$ such that $V_i \subseteq \mathcal{X}$, and similarly $J$ is the set of $i \in [t]$ such that $V_i \subseteq \mathcal{Y}$. Note that the bipartite graph between $I$ and $J$ in the graph $K$ is complete. 
		\begin{claim}\label{claim:pairs}
			Let $i \in I, j \in J$ such that $\{i,j\} \notin \mathcal{M}$.
			Then 
			$d(V_i,V_i,V_j),d(V_j,V_j,V_i) \geq 1-\frac{\varepsilon^2}{32}$.
		\end{claim}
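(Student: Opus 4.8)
The plan is to apply Lemma~\ref{lem:homogeneous} inside the link graph $L(v)$ with $X=V_i$, $Y=V_j$ and a carefully chosen third set $Z$. First I would locate the equivalence classes of $\sim$ containing $i$ and $j$: write $i\in I_a$, $j\in I_b$. Since $i\in I$ and $j\in J$ we have $a\in\mathcal{I}$, $b\in\mathcal{J}$, so $a\neq b$, hence $I_a\neq I_b$; and since the bipartite graph between $\mathcal{I}$ and $\mathcal{J}$ in $F$ is complete, $\{a,b\}\in E(F)$. The point of $I_a\neq I_b$ is that $i\not\sim j$, so there is some $k\in[t]\setminus\{i,j\}$ with $\{i,k\}\in E(K)$ if and only if $\{j,k\}\notin E(K)$. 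Swapping the roles of $i$ and $j$ if necessary — which is harmless, since conclusion~1 of Lemma~\ref{lem:homogeneous} is symmetric in $X$ and $Y$, and so are its other two conclusions — I may assume $\{i,k\}\in E(K)$ and $\{j,k\}\notin E(K)$.

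Next I would set $Z:=U_k$ and verify, directly from the definition of $K$ and of the sets $U_\ell$, that $(U_k,V_i)$ is complete in $G$ while $(U_k,V_j)$ is empty in $G$, \emph{regardless} of whether $\{i,k\}$ or $\{j,k\}$ lies in the matching $\mathcal{M}$. Indeed, if $\{i,k\}\notin\mathcal{M}$ then $(V_i,V_k)$ is homogeneous by Item~3 of Lemma~\ref{lem:cograph_partition_main} and, as $\{i,k\}\in E(K)$, it is complete, so (since $U_k\subseteq V_k$) $(U_k,V_i)$ is complete; and if $\{i,k\}\in\mathcal{M}$ then $\{i,k\}\in E(K)$ means Item~4(a) of Lemma~\ref{lem:cograph_partition_main} holds, which says precisely that $U_k=V'_k$ is complete to $V_i$. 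The statement for $(U_k,V_j)$ is the analogue using Item~4(b). This bookkeeping around matching pairs is the step I expect to require the most care — it is not deep, but it is exactly why one must pass to the smaller sets $U_k$ rather than to $V_k$, since across a matching pair only $U_k=V'_k$ has a clean adjacency pattern.

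It remains to push this through Lemma~\ref{lem:homogeneous}. Using that $G$ and $L(v)$ differ on at most $\zeta(n-1)^2$ pairs, together with $|V_i|,|V_j|\geq\alpha\beta(n-1)$ and $|U_k|\geq\alpha\beta^3(n-1)/2$ from~\eqref{eq:V_i U_i} and the choice $\zeta=\alpha^2\beta^4\gamma^2/160$ with $\gamma=\varepsilon^2/32$, a routine computation gives $d_{L(v)}(V_i,U_k)\geq 1-\gamma^2/80$, $d_{L(v)}(V_j,U_k)\leq\gamma^2/80$ and $d_{L(v)}(V_i,V_j)\geq 1-\gamma^2/80$; also $U_k$ is disjoint from $V_i,V_j$ (as $k\neq i,j$) and does not contain $v$ (as $v\notin V(G)$). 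I then apply Lemma~\ref{lem:homogeneous} with $X=V_i$, $Y=V_j$, $Z=U_k$ and parameter $\gamma$, the size hypothesis $|X|,|Y|\geq 2/\gamma$ being satisfied once $n_{\ref{lem:main}}$ is a large enough polynomial in $1/(\xi\varepsilon)$. Each of $|V_i|^2|V_j||U_k|$, $|V_i||V_j|^2|U_k|$ and $|V_i|^2|V_j|^2$ is at least $\tfrac12\alpha^4\beta^6(n-1)^4\geq\tfrac14\alpha^4\beta^6 n^4$, so conclusion~2 of Lemma~\ref{lem:homogeneous} would produce at least $\tfrac{\gamma^2}{128}\cdot\tfrac14\alpha^4\beta^6 n^4\geq\delta n^4$ induced copies of $D_2$, and conclusion~3 at least $\delta n^3$ induced copies of $D_2$ through $v$, each contradicting the hypothesis of Lemma~\ref{lem:main} by the choice of $\delta$. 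Hence conclusion~1 holds, that is $d(V_i,V_i,V_j),d(V_j,V_j,V_i)\geq 1-\gamma=1-\varepsilon^2/32$, which is exactly the claim.
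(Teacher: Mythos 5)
Your proposal is correct and follows essentially the same route as the paper: pick $k$ witnessing $i\not\sim j$, pass to $Z=U_k$ to get clean adjacencies in $G$ even across matching pairs, transfer the densities to $L(v)$ via the $\zeta$-closeness, and apply Lemma~\ref{lem:homogeneous}, ruling out its Items 2 and 3 by the choice of $\delta$. The only difference is that you spell out the matching-pair bookkeeping (Items 3 and 4 of Lemma~\ref{lem:cograph_partition_main}) that the paper leaves implicit in its earlier observation about the sets $U_i$.
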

		\begin{proof}
			Since $i\not\sim j$, there is $k \in [t] \setminus \{i,j\}$ such that $\{i,k\} \in E(K), \{j,k\} \notin E(K)$ or $\{j,k\} \in E(K), \{i,k\} \notin E(K)$. Without loss of generality, assume that $\{i,k\} \in E(K), \{j,k\} \notin E(K)$. 
			

			Note that the bipartite graphs $(V_i,V_j),(U_k,V_i)$ are complete in $G$, and the bipartite graph $(U_k,V_j)$ is empty in $G$. 
			By \eqref{eq:V_i U_i}, we have $|V_i|,|V_j| \geq \alpha\beta (n-1)$ and $|U_k| \geq \alpha\beta^3(n-1)/2$. Since $G$ and $L(v)$ differ on at most $\zeta(n-1)^2$ edges, we have 
			$d_{L(v)}(V_i,U_k) \geq 1 - \frac{\zeta}{\alpha^2\beta^4/2} \geq 1 - \frac{\gamma^2}{80}$ and similarly 
			$d_{L(v)}(V_i,V_j) \geq 1 - \frac{\zeta}{\alpha^2\beta^2} \geq 1 - \frac{\gamma^2}{80}$ and $d_{L(v)}(V_j,U_k) \leq \frac{\zeta}{\alpha^2\beta^4/2} \leq \frac{\gamma^2}{80}$. Apply Lemma \ref{lem:homogeneous} with $X = V_i, Y = V_j, Z = U_k$. 
			Items 2 and 3 in Lemma \ref{lem:homogeneous} cannot hold by our choice of $\delta$, as $\min\{ |X|^2|Y||Z|, \; |X||Y|^2|Z|, \; |X|^2|Y|^2 \} \geq (\alpha\beta(n-1))^3 \cdot (\alpha\beta^3(n-1)/2) \geq 
			\alpha^4\beta^{6} n^4/4$. Hence, Item 1 must hold, giving $d(V_i,V_i,V_j),d(V_j,V_j,V_i) \geq 1-\gamma = 1 - \frac{\varepsilon^2}{32}$, as required. 
		\end{proof}
		\begin{claim}\label{claim:triples_YYX}
			Let $i \in I$ and $j,k \in J$ with $j \neq k$ and $\{i,j\},\{i,k\} \notin \mathcal{M}$. Then $d(V_i,V_j,V_k) \geq 1 - \frac{\varepsilon}{2}$.
		\end{claim}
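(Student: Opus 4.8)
The plan is to read off the triple density directly from the two pair densities furnished by Claim~\ref{claim:pairs}, using Lemma~\ref{lem:pairs_to_triple} with the parameter $\varepsilon/2$. In fact this is precisely why Claim~\ref{claim:pairs} was phrased with the slightly stronger bound $1-\frac{\varepsilon^2}{32} = 1 - \frac{(\varepsilon/2)^2}{8}$: that is exactly the hypothesis needed to run Lemma~\ref{lem:pairs_to_triple} at scale $\varepsilon/2$.

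First I would collect the inputs. Since $i \in I$, $j,k \in J$, and $\{i,j\},\{i,k\} \notin \mathcal{M}$, Claim~\ref{claim:pairs} gives $d(V_i,V_i,V_j) \geq 1 - \frac{\varepsilon^2}{32}$ and $d(V_i,V_i,V_k) \geq 1 - \frac{\varepsilon^2}{32}$. I would also note that, by \eqref{eq:V_i U_i}, $|V_i| \geq \alpha\beta(n-1)$, and since $n \geq n_{\ref{lem:main}}$ is a sufficiently large polynomial in $1/(\xi\varepsilon)$, this is at least $4/\varepsilon = 2/(\varepsilon/2)$, so the size requirement of Lemma~\ref{lem:pairs_to_triple} is satisfied.

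Then I would apply the first half of Lemma~\ref{lem:pairs_to_triple} with $X = V_i$, $Y = V_j$, $Z = V_k$ and parameter $\varepsilon/2$ in place of $\varepsilon$. This yields one of two outcomes: either $d(V_i,V_j,V_k) \geq 1 - \frac{\varepsilon}{2}$, which is exactly the assertion; or $H$ contains at least $\frac{(\varepsilon/2)^2}{16}|V_i|^2|V_j||V_k| = \frac{\varepsilon^2}{64}|V_i|^2|V_j||V_k|$ induced copies of $D_2$. In the second case I would bound $|V_i|,|V_j|,|V_k| \geq \alpha\beta(n-1)$ via \eqref{eq:V_i U_i}, so that the number of induced copies of $D_2$ is at least $\frac{\varepsilon^2}{64}(\alpha\beta(n-1))^4 \geq \frac{\varepsilon^2}{128}(\alpha\beta)^4 n^4 \geq \delta n^4$, using $(n-1)^4 \geq n^4/2$ for $n$ large and the choice $\delta \leq \frac{\varepsilon^2}{128}(\alpha\beta)^4$ made at the start of the proof of Lemma~\ref{lem:main}. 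This contradicts condition~2 of Lemma~\ref{lem:main}, so the first outcome must hold.

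There is no genuine obstacle here; the argument is essentially bookkeeping. The only points that need care are (i) that one runs Lemma~\ref{lem:pairs_to_triple} at scale $\varepsilon/2$ rather than $\varepsilon$, which is what forces the $\frac{\varepsilon^2}{32}$ in Claim~\ref{claim:pairs} and costs the harmless factor $2$ in the conclusion, and (ii) that $n_{\ref{lem:main}}$ and $\delta$ are chosen so that both the size hypothesis $|V_i| \geq 4/\varepsilon$ and the counting contradiction go through — both are guaranteed by the parameter choices at the beginning of the proof of Lemma~\ref{lem:main}.
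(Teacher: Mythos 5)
Your proposal is correct and follows essentially the same route as the paper: invoke Claim \ref{claim:pairs} for the two pair densities, apply Lemma \ref{lem:pairs_to_triple} with $X=V_i$, $Y=V_j$, $Z=V_k$ at parameter $\varepsilon/2$, and rule out the second alternative by the lower bound $|V_i|,|V_j|,|V_k|\geq\alpha\beta(n-1)$ together with the choice $\delta\leq\frac{\varepsilon^2}{128}(\alpha\beta)^4$. Your explicit check of the size hypothesis $|V_i|\geq 4/\varepsilon$ is a minor point the paper leaves implicit, but otherwise the arguments coincide.
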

		\begin{proof}
			By Claim \ref{claim:pairs}, we have $d(V_i,V_i,V_j),d(V_i,V_i,V_k) \geq 1-\frac{\varepsilon^2}{32}$. Now apply Lemma \ref{lem:pairs_to_triple} with $X = V_i, Y = V_j, Z = V_k$ and parameter $\frac{\varepsilon}{2}$. If Item 2 in Lemma \ref{lem:pairs_to_triple} holds then $H$ contains at least $\frac{(\varepsilon/2)^2}{16}|V_i|^2|V_j||V_k| \geq \frac{\varepsilon^2}{64}(\alpha\beta (n-1))^4 \geq \delta n^4$ induced copies of $D_2$, a contradiction. Hence Item 1 in Lemma \ref{lem:pairs_to_triple} holds, giving $d(V_i,V_j,V_k) \geq 1-\frac{\varepsilon}{2}$. 
		\end{proof}
		\noindent
		By symmetry, we have the following:
		\begin{claim}\label{claim:triples_XXY}
			Let $j,k \in I$, $i \in J$ with $j \neq k$ and $\{i,j\},\{i,k\} \notin \mathcal{M}$. Then $d(V_i,V_j,V_k) \geq 1 - \frac{\varepsilon}{2}$.
		\end{claim}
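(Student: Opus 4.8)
The plan is to mirror the proof of Claim~\ref{claim:triples_YYX} essentially verbatim, exploiting the fact that the conclusion of Claim~\ref{claim:pairs} is two-sided and hence symmetric in its two index arguments: nothing in that argument actually distinguishes $I$ from $J$ (it ultimately rests on Lemma~\ref{lem:homogeneous}, whose conclusion $d(X,X,Y),d(Y,Y,X)\ge 1-\gamma$ is itself symmetric). Concretely, since $j,k\in I$, $i\in J$ and $\{i,j\},\{i,k\}\notin\mathcal M$, I would apply Claim~\ref{claim:pairs} to the pair $(j,i)$ and to the pair $(k,i)$ — reading the claim with the roles of its $I$-index and $J$-index interchanged — to obtain $d(V_i,V_i,V_j)\ge 1-\frac{\varepsilon^2}{32}$ and $d(V_i,V_i,V_k)\ge 1-\frac{\varepsilon^2}{32}$. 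Since $1-\frac{\varepsilon^2}{32}=1-\frac{(\varepsilon/2)^2}{8}$, the hypotheses of Lemma~\ref{lem:pairs_to_triple} are met with $X=V_i$ as the common part.

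Next I would invoke Lemma~\ref{lem:pairs_to_triple} with $X=V_i$, $Y=V_j$, $Z=V_k$ and parameter $\varepsilon/2$; the size requirement $|V_i|\ge 4/\varepsilon$ holds because $|V_i|\ge\alpha\beta(n-1)$ by \eqref{eq:V_i U_i} and $n\ge n_{\ref{lem:main}}=\poly(1/(\xi\varepsilon))$. If Item~2 of that lemma were to hold, then $H$ would contain at least $\frac{(\varepsilon/2)^2}{16}|V_i|^2|V_j||V_k|\ge\frac{\varepsilon^2}{64}(\alpha\beta(n-1))^4\ge\delta n^4$ induced copies of $D_2$, using \eqref{eq:V_i U_i} together with the choice $\delta\le\frac{\varepsilon^2}{128}(\alpha\beta)^4$ (and $n$ large), contradicting hypothesis~2 of Lemma~\ref{lem:main}. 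Therefore Item~1 holds, i.e.\ $d(V_i,V_j,V_k)\ge 1-\frac{\varepsilon}{2}$, which is exactly the claim since the density of an ordered triple of pairwise disjoint sets is unchanged under permuting the three sets.

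I expect no real obstacle here. The only point demanding a moment's care is precisely the one flagged above: Claim~\ref{claim:pairs} is phrased with its first index in $I$ and its second in $J$, so one should note that its statement delivers the full two-sided estimate $d(V_i,V_i,V_j),d(V_j,V_j,V_i)\ge 1-\frac{\varepsilon^2}{32}$, which is what makes the ``$V_i$ as common part'' instance of Lemma~\ref{lem:pairs_to_triple} available in the present configuration. Once that is observed, the remaining counting is identical bookkeeping to the proof of Claim~\ref{claim:triples_YYX}.
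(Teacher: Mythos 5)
Your proposal is correct and matches the paper's intent exactly: the paper derives Claim~\ref{claim:triples_XXY} simply ``by symmetry'' from Claim~\ref{claim:triples_YYX}, and your argument spells out that symmetry, using the two-sided conclusion of Claim~\ref{claim:pairs} to get $d(V_i,V_i,V_j),d(V_i,V_i,V_k)\ge 1-\frac{\varepsilon^2}{32}$ and then applying Lemma~\ref{lem:pairs_to_triple} with $X=V_i$, $Y=V_j$, $Z=V_k$ and parameter $\frac{\varepsilon}{2}$, ruling out Item~2 by the choice of $\delta$. No gaps; the bookkeeping is identical to the proof of Claim~\ref{claim:triples_YYX}.
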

		By Claims \ref{claim:pairs} and \ref{claim:triples_YYX}, the number of triples $\{x,y_1,y_2\}$ with $x \in \mathcal{X}$ and $y_1,y_2 \in \mathcal{Y}$ which are non-edges of $H$, is at most 
		\begin{align*}\label{eq:e(Y,Y,X)}
		&\frac{\varepsilon}{2} \cdot \sum_{i \in I, j \in J}{|V_i|\binom{|V_j|}{2}} + 
		\frac{\varepsilon}{2} \cdot \sum_{i \in I, \; \{j,k\} \in \binom{J}{2}}
		{|V_i||V_j||V_k|} + 
		|\mathcal{Y}| \cdot
		\sum_{\substack{i \in I, \; j \in J: \\ \{i,j\} \in \mathcal{M}}}{|V_i||V_j|} = 
		\\ &\frac{\varepsilon}{2}|\mathcal{X}|\binom{|\mathcal{Y}|}{2} + 
		|\mathcal{Y}| \cdot 
		\sum_{\substack{i \in I, \; j \in J: \\ \{i,j\} \in \mathcal{M}}}{|V_i||V_j|} \leq 
		\frac{\varepsilon}{2}|\mathcal{X}|\binom{|\mathcal{Y}|}{2} + \alpha n^2 |\mathcal{Y}|,
		\end{align*}
		where in the last inequality we used \eqref{eq:matching_pairs sum}. It follows that 
		$$
		d(\mathcal{Y},\mathcal{Y},\mathcal{X}) \geq 1 - \frac{\varepsilon}{2} - \frac{\alpha n^2|\mathcal{Y}|}{|\mathcal{X}|\binom{|\mathcal{Y}|}{2}} \geq 
		1 - \frac{\varepsilon}{2} - \frac{4\alpha n^2}{|\mathcal{X}||\mathcal{Y}|} \geq 
		1 - \frac{\varepsilon}{2} - \frac{4\alpha n^2}{(\xi n/10)^2} \geq 1 - \varepsilon.
		$$
		Here, the last inequality uses our choice of $\alpha$. 
		Similarly, by using Claims \ref{claim:pairs} and \ref{claim:triples_XXY}, one establishes that $d(\mathcal{X},\mathcal{X},\mathcal{Y}) \geq 1 - \varepsilon$. Hence, $(\mathcal{X},\mathcal{Y})$ is $\varepsilon$-homogeneous, as required. 
	\end{proof} 
	
	\section{Proof of Theorem \ref{thm:removal}}\label{sect:removal}
	\noindent
	We start with the following corollary of Lemma \ref{lem:main}. 
	\begin{lemma}\label{lem:main_testing}
		For every $\varepsilon > 0$ there are $\delta = \delta(\varepsilon) = \poly(\varepsilon) > 0$ and $n_0 = n_0(\varepsilon) = \poly(1/\varepsilon)$ such that the following holds. Let $H$ be a $3$-uniform hypergraph on $n \geq n_0$ vertices which contains less than $\delta n^4$ induced copies of $D_2$. Then there is a partition $V(H) = \mathcal{X} \cup \mathcal{Y}$, $\mathcal{X},\mathcal{Y} \neq \emptyset$, such that $(\mathcal{X},\mathcal{Y})$ is $\varepsilon$-homogeneous. 
	\end{lemma}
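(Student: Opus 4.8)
The plan is to split into two cases depending on whether $H$ is ``globally dense or sparse'' on some level, or has a vertex whose link graph is of intermediate density. Specifically, I would first ask whether there exists a vertex $v \in V(H)$ with $\xi \le d(L(v)) \le 1-\xi$ for a suitable absolute constant $\xi = \xi(\varepsilon)$ to be chosen polynomially small in $\varepsilon$. If such a $v$ exists, then I am essentially done: I need to verify the hypotheses of Lemma \ref{lem:main}, apply it, and then merge the leftover set $\mathcal{S}$ into one of the two parts. The only genuine point to check is hypothesis 3 of Lemma \ref{lem:main} --- that $v$ lies in few induced copies of $D_2$. This need not hold for the particular $v$ we picked, but by averaging, at most (say) a $\sqrt{\delta}$-fraction of vertices can lie in $\ge \sqrt{\delta}\, n^3$ induced copies of $D_2$ each; I would argue that either there is some vertex $v$ of intermediate link-density that also lies in few induced $D_2$'s (and then apply Lemma \ref{lem:main} with $\delta' = \sqrt{\delta}$, adjusting the final bound), or else essentially all vertices of intermediate link-density are ``heavy,'' which forces $H$ to have many induced $D_2$'s, contradicting the hypothesis. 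After applying Lemma \ref{lem:main}, I get $V(H) = \mathcal{X} \cup \mathcal{Y} \cup \mathcal{S}$ with $(\mathcal{X},\mathcal{Y})$ being $\varepsilon$-homogeneous and $|\mathcal{S}|$ small; I then absorb $\mathcal{S}$ into whichever of $\mathcal{X},\mathcal{Y}$ is larger, noting that moving $O(\xi n)$ vertices changes the relevant densities $d(\mathcal{X},\mathcal{X},\mathcal{Y})$ and $d(\mathcal{Y},\mathcal{Y},\mathcal{X})$ by only $O(\xi)$, which is absorbed by replacing $\varepsilon$ with $\varepsilon/2$ throughout.

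The remaining case is when \emph{every} vertex $v$ has $d(L(v)) < \xi$ or $d(L(v)) > 1-\xi$. Partition $V(H)$ into $V^+ = \{v : d(L(v)) > 1-\xi\}$ and $V^- = \{v : d(L(v)) < \xi\}$, and suppose without loss of generality $|V^+| \ge |V^-|$, so $|V^+| \ge n/2$. Then I claim $\mathcal{X} = V^+$, $\mathcal{Y} = V^-$ (or, if $V^-$ is empty, an arbitrary nontrivial bipartition handled separately) is $\varepsilon$-homogeneous, or else we can recurse. The point is that $\sum_{v \in V^+} d(L(v)) \binom{n-1}{2} \ge (1-\xi)|V^+|\binom{n-1}{2}$ counts pairs of edges through $V^+$-vertices, so the number of non-edges of $H$ meeting $V^+$ in at least one vertex is small; quantitatively, the number of non-edges with at least one vertex in $V^+$ is at most $\xi |V^+| \binom{n-1}{2} \le \xi n^3$. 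If $|V^-| \ge \sqrt{\xi}\, n$, this immediately gives $d(V^+, V^+, V^-), d(V^+, V^-, V^-) \ge 1 - O(\sqrt{\xi}) \ge 1-\varepsilon$ after a suitable choice of $\xi$ --- wait, one also needs $d(V^-,V^-,V^+)$ large, which does not follow from the above; instead I would observe that either this density is already $\ge 1-\varepsilon$, in which case $(V^+,V^-)$ is $\varepsilon$-homogeneous, or it is $< 1-\varepsilon$, and then I recurse \emph{inside} the structure: actually the cleaner route is to observe that since almost all edges with two vertices in $V^+$ are present, $V^+$ itself is ``almost complete'' and we can take $\mathcal{X}$ to be a clique-like piece; but the statement only asks for one $\varepsilon$-homogeneous bipartition, so it suffices to find one pair of large parts behaving correctly. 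Let me restructure: when $|V^-|$ is tiny ($< \sqrt{\xi}\, n$), move $V^-$ into $V^+$; now $d(\mathcal X)$ where $\mathcal X = V(H)$... no, we need a bipartition. So when $V^-$ is tiny I would instead look \emph{inside} $V^+$: since non-edges meeting $V^+$ are rare, $H[V^+]$ is almost-complete, and I split $V^+$ into any two large halves $\mathcal{X}_0, \mathcal{Y}_0$, getting $d(\mathcal{X}_0,\mathcal{X}_0,\mathcal{Y}_0), d(\mathcal{Y}_0,\mathcal{Y}_0,\mathcal{X}_0) \ge 1-\varepsilon$ directly, and dump the tiny $V^-$ into $\mathcal{Y}_0$.

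The main obstacle I anticipate is the bookkeeping of which densities are actually controlled: intermediate-link-density vertices give us \emph{two-sided} control via Lemma \ref{lem:main}, but the ``all vertices extreme'' case only directly controls non-edges through $V^+$ (equivalently, pairs of $H$-edges sharing a vertex in $V^+$), and one must be careful that this yields $\varepsilon$-homogeneity of a genuine \emph{pair} $(\mathcal{X},\mathcal{Y})$ in the technical sense of the paper (i.e., both $d(\mathcal X,\mathcal X,\mathcal Y)$ and $d(\mathcal Y,\mathcal Y,\mathcal X)$ close to the same extreme), not merely one-sided density control. The fix is to always work inside the large part $V^+$ (or $V^-$) where both one-vertex-in and two-vertices-in densities are forced by the link-density bound, and to treat the small part as leftover that gets absorbed. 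A secondary technical point is propagating the $\poly(\varepsilon)$ dependence: $\xi$ will be $\poly(\varepsilon)$, $\delta$ from Lemma \ref{lem:main} is $\poly(\xi\varepsilon) = \poly(\varepsilon)$, and the averaging step costs only a square root, so the final $\delta$ and $n_0$ remain $\poly(\varepsilon)$ and $\poly(1/\varepsilon)$ respectively.
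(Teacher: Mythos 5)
The main gap is in your first case, at the step where you secure hypothesis 3 of Lemma \ref{lem:main}. Your dichotomy is: either some vertex of intermediate link-density lies in few induced copies of $D_2$, or else all intermediate-link-density vertices are ``heavy,'' which you claim forces many induced copies of $D_2$ and hence a contradiction. The second branch is false. Markov only gives that at most $O(\sqrt{\delta})\,n$ vertices are heavy; it is perfectly possible that the set of intermediate vertices is small (even a single vertex) and that all of them are heavy, in which case the copies they account for number only $O(\sqrt{\delta}\,n^3)$, which is far below $\delta n^4$ for large $n$ (the regime the lemma must cover). So no contradiction arises, and this situation is covered by neither of your cases: your second case explicitly assumes that \emph{every} vertex has extreme link density, whereas here a small nonempty set of intermediate (heavy) vertices remains. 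As written, the case analysis has a hole exactly in this middle ground.

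The hole is fixable, and the paper's proof shows the cleanest repair: if even one vertex $v$ has $d(L(v)) \le \varepsilon$ or $d(L(v)) \ge 1-\varepsilon$, then the partition $\{v\} \cup \big(V(H)\setminus\{v\}\big)$ is \emph{already} $\varepsilon$-homogeneous (for a singleton part $X$ the density $d(X,X,Y)$ is vacuous and $d(Y,Y,X)=d(L(v))$), so one may assume every vertex is intermediate with threshold $\varepsilon$ itself; then averaging over \emph{all} vertices yields a vertex in fewer than $\delta n^3$ induced copies, and Lemma \ref{lem:main} applies directly --- no separate treatment of $V^+,V^-$ and no square-root loss are needed. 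Alternatively, you could keep your structure but let your second case tolerate an exceptional set of $O(\sqrt{\delta})\,n$ intermediate vertices: dump them, together with the minority extreme class, into one half of an arbitrary bisection of the majority class; your count of non-edges meeting $V^+$ then bounds both $d(\mathcal{X},\mathcal{X},\mathcal{Y})$ and $d(\mathcal{Y},\mathcal{Y},\mathcal{X})$ by $1-O(\xi)$, since every relevant triple meets the half contained in $V^+$ (this is also the right fix for the one-sidedness issue you noticed with the pair $(V^+,V^-)$). The remaining ingredients of your plan --- applying Lemma \ref{lem:main} with $\xi=\poly(\varepsilon)$, absorbing $\mathcal{S}$ into the larger part at a cost of $O(\xi)$ in the densities, and the polynomial bookkeeping --- are sound and parallel the paper's proof.
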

	\begin{proof}
		If there is a vertex $v \in V(H)$ with $\deg(v) \geq (1-\varepsilon)\binom{n-1}{2}$ or $\deg(v) \leq \varepsilon \binom{n-1}{2}$, then the partition $\{v\},V(H) \setminus \{v\}$ satisfies the assertion of the lemma. Otherwise, for every $v \in V(H)$ we have $\varepsilon \binom{n-1}{2} \leq \deg(v) \leq (1 - \nolinebreak \varepsilon) \binom{n-1}{2}$, or, equivalently, $\varepsilon \leq d(L(v)) \leq 1 - \varepsilon$. By averaging, there is a vertex $v \in V(H)$ which participates in less than $\delta n^3$ induced copies of $D_2$. Let us fix such a vertex $v$. Now all conditions of Lemma \ref{lem:main} are satisfied. Apply Lemma \ref{lem:main} with parameters $\frac{\varepsilon}{2}$ and $\xi = \frac{\varepsilon}{16}$ to obtain a partition 
		$V(H) = \mathcal{X}' \cup \mathcal{Y}' \cup \mathcal{S}$ with 
		$|\mathcal{S}| \leq \frac{\varepsilon}{32}n$ and $(\mathcal{X}',\mathcal{Y}')$ is $\frac{\varepsilon}{2}$-homogeneous. Without loss of generality, suppose that $|\mathcal{X}'| \leq |\mathcal{Y}'|$; so $|\mathcal{Y}'| \geq (1 - \varepsilon)n/2$. Set $\mathcal{X} = \mathcal{X}'$ and $\mathcal{Y} = \mathcal{Y}' \cup \mathcal{S}$. 
		
		Let us assume, without loss of generality, that 
		$d(\mathcal{X}',\mathcal{X}',\mathcal{Y}'), d(\mathcal{Y}',\mathcal{Y}',\mathcal{X}') \geq 1 - \frac{\varepsilon}{2}$. We have
		$$
		d(\mathcal{X},\mathcal{X},\mathcal{Y}) \geq 1 - \frac{\varepsilon}{2} - 
		\frac{|\mathcal{S}|}{|\mathcal{Y}'|} 
		\geq 
		1 - \frac{\varepsilon}{2} - \frac{\varepsilon/32}{(1 - \varepsilon)/2} \geq 1 - \varepsilon,
		$$
		and
		$$
		d(\mathcal{Y},\mathcal{Y},\mathcal{X}) \geq 1 - \frac{\varepsilon}{2} - 
		\frac{|\mathcal{S}||\mathcal{Y}'|}{\binom{|\mathcal{Y}'|}{2}}
		\geq 
		1 - \frac{\varepsilon}{2} - \frac{4|\mathcal{S}|}{|\mathcal{Y}'|} \geq 
		1 - \frac{\varepsilon}{2} - \frac{\varepsilon/8}{(1 - \varepsilon)/2} \geq 1 - \varepsilon.
		$$
	\end{proof}
	
	Define the family of \emph{cohypergraphs} as follows. The 3-uniform hypergraph $H$ is a cohypergraph if $|V(H)|=1$, or $V(H)$ has a partition $X\cup Y$ with $X,Y\neq \emptyset$ such that $(X,Y)$ is homogeneous and $H[X], H[Y]$ are cohypergraphs. The next lemma easily follows from the definitions.
	
	\begin{lemma}\label{lem:partition}
		If $H$ is a cohypergraph, then $H$ is $D_2$-free.
	\end{lemma}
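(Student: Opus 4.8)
The plan is to prove this by induction on $|V(H)|$, following the recursive definition of cohypergraphs. The base case $|V(H)| = 1$ is trivial since a single vertex spans no copy of $D_2$. For the inductive step, suppose $V(H) = X \cup Y$ with $X, Y \neq \emptyset$, the pair $(X,Y)$ homogeneous, and $H[X], H[Y]$ both cohypergraphs. By the induction hypothesis, $H[X]$ and $H[Y]$ are $D_2$-free, so any induced copy of $D_2$ in $H$ must use vertices from both $X$ and $Y$.

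The key step is a case analysis on how the four vertices of a putative induced copy of $D_2$ split between $X$ and $Y$. Write the four vertices as a $4$-set $T$ with $|T \cap X| = a$ and $|T \cap Y| = 4 - a$, where $1 \le a \le 3$ by the previous paragraph. I would then count, using homogeneity of $(X,Y)$, how many edges of $H$ lie inside $T$. Recall that $(X,Y)$ homogeneous means $d(X,X,Y) = d(Y,Y,X) = 1$ or $d(X,X,Y) = d(Y,Y,X) = 0$; assume first we are in the ``complete'' case (the ``empty'' case is symmetric by taking complements, and note $D_2$ is self-complementary under swapping edges/non-edges on $4$ vertices only up to the pattern — actually one should just run the same argument). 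In the complete case, every triple with at least one vertex in $X$ and at least one in $Y$ is an edge. So if $a \in \{1, 2\}$, then the triples of $T$ that are \emph{not} automatically edges are exactly those contained in $Y$ (when $a=1$, there is exactly one such triple; when $a = 2$, there are none), and symmetrically for the triples inside $X$. A direct tally shows: when $a = 2$, all four triples of $T$ cross between $X$ and $Y$, hence all four are edges, so $H[T]$ has $4$ edges, not $2$; when $a = 1$, three triples cross (all edges) and one triple lies in $Y$, so $H[T]$ has at least $3$ edges; when $a = 3$, symmetrically at least $3$ edges. In every case $H[T]$ has at least $3$ edges, so it cannot be an induced $D_2$, which has exactly $2$ edges.

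The ``empty'' case is handled identically after replacing ``edge'' by ``non-edge'': every crossing triple is a non-edge, so $H[T]$ has at least $3$ non-edges, hence at most $1$ edge, again ruling out $D_2$. Combining both cases, $H$ contains no induced copy of $D_2$, completing the induction.

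There is no serious obstacle here; the only thing requiring care is getting the counting of crossing versus non-crossing triples right in the $a=1$ and $a=3$ cases, and remembering that $H[X]$, $H[Y]$ being $D_2$-free (via induction) is what eliminates the $a=0$ and $a=4$ cases. I would write the argument uniformly by observing that for any $4$-set $T$ meeting both $X$ and $Y$, at least three of the four triples of $T$ are ``crossing'' (contain vertices of both parts) unless $T$ is entirely in one part — indeed a $4$-set split as $2+2$ has all four triples crossing, and a $4$-set split as $1+3$ or $3+1$ has exactly three crossing triples — and crossing triples are all edges (resp.\ all non-edges) by homogeneity, forcing $e(H[T]) \ge 3$ or $e(H[T]) \le 1$.
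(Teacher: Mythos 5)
Your proof is correct and follows essentially the same route as the paper: induction on $|V(H)|$ along the recursive cohypergraph decomposition, with the crossing $4$-sets ruled out by homogeneity of $(X,Y)$. The paper simply asserts this last step, whereas you spell out the counting of crossing triples (all four for a $2{+}2$ split, three for a $1{+}3$ split), which forces at least $3$ or at most $1$ edge on any crossing $4$-set and hence no induced $D_2$.
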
 
	
	\begin{proof}
	    We prove this by induction on $|V(H)|$. The base case $|V(H)|=1$ is trivial, so assume $|V(H)|\geq 2$. Then there exists a partition $X\cup Y$ with $X,Y\neq \emptyset$ such that $(X,Y)$ is homogeneous and $H[X], H[Y]$ are cohypergraphs, and therefore $D_2$-free. But then $H$ is also $D_2$-free.
	\end{proof}
	
	We remark that the converse is not true, that is, not every $D_2$-free hypergraph is a cohypergraph. E.g. linear 3-graphs are $D_2$-free but not necessarily cohypergraphs. However, we prove that if a hypergraph contains few induced copies of $D_2$, it can be made a cohypergraph by changing \nolinebreak few \nolinebreak edges.
	
	\begin{proof}[Proof of Theorem \ref{thm:removal}]
		Let $\delta'$ denote the $\delta(\varepsilon)$ given by Lemma \ref{lem:main_testing}. We show that in Theorem \ref{thm:removal}, one can take $\delta = \delta'\varepsilon^4$. 
		We decompose $H$ by repeatedly applying Lemma \ref{lem:main_testing}, as follows. It is convenient to describe the decomposition using a tree, where each node corresponds to a subset of $V(H)$. The root is $V(H)$. At each step, if there is a leaf $X$ with $|X| \geq \varepsilon n$, then apply Lemma \ref{lem:main_testing} to $H[X]$, noting that $H[X]$ contains less than $\delta' |X|^4$ induced copies of $D_2$ by our choice of $\delta$. 
		Lemma \ref{lem:main_testing} gives a partition $X = X_1 \cup X_2$ such that $(X_1,X_2)$ is $\varepsilon$-homogeneous. Now add $X_1,X_2$ as the children of $X$. When this process stops, every leaf is of size less than $\varepsilon n$. For each non-leaf $X$, make the pair $(X_1,X_2)$ homogeneous by adding/deleting at most $\varepsilon \cdot \big( \binom{|X_1|}{2}|X_2| + \binom{|X_2|}{2}|X_1| \big)$ edges. This requires at most $\varepsilon\binom{n}{3}$ edge changes in total. Next, for each leaf $X$, delete every edge of $H[X]$. This requires at most 
		$$
		\sum_{X \text{ leaf}}{\binom{|X|}{3}} < \frac{\varepsilon^2 n^2}{6} \cdot
		\sum_{X \text{ leaf}}{|X|} = \frac{\varepsilon^2 n^3}{6}
		$$ 
		additional edge changes. So the total number of edge-changes is at most $\varepsilon\binom{n}{3} + \frac{\varepsilon^2 n^3}{6} \leq \varepsilon n^3$. After these edge-changes, it is easy to see that the resulting hypergraph is a cohypergraph, so it is also induced $D_2$-free by Lemma \ref{lem:partition}. This completes the proof. 
	\end{proof}

\section{Proof of Theorem \ref{thm:EH}}\label{sect:EH}
	We start with some preliminary lemmas. The following lemma shows that if the density of a hypergraph is bounded away from $0$ and $1$, then there is a vertex whose link graph also has density bounded away from $0$ and $1$. 
	\begin{lemma}\label{lem:link_graph_density}
	Let $\beta > 0$, and let $H$ be an $n$-vertex $3$-uniform hypergraph 
	with $n \geq 4/\beta$ and $\beta \leq \nolinebreak d(H) \leq 1 - \beta$. Then there is $v \in V(H)$ with $\frac{\beta^2}{16} \leq d(L(v)) \leq 1 - \frac{\beta^2}{16}$.
	\end{lemma}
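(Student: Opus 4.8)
The plan is to argue by averaging over the vertices. Recall that $d(H) = e(H)/\binom{n}{3}$, and note that for each vertex $v$, the link graph $L(v)$ has vertex set $V(H) \setminus \{v\}$ and $e(L(v))$ equals the number of edges of $H$ containing $v$. Since every edge of $H$ contains exactly three vertices, we have $\sum_{v \in V(H)} e(L(v)) = 3 e(H)$, so the average value of $d(L(v)) = e(L(v))/\binom{n-1}{2}$ over $v \in V(H)$ equals $\frac{3 e(H)}{n \binom{n-1}{2}} = \frac{e(H)}{\binom{n}{3}} = d(H)$, which lies in $[\beta, 1-\beta]$. In particular, there is a vertex $v$ with $d(L(v)) \geq \beta$ and a vertex $w$ with $d(L(w)) \leq 1 - \beta$; but this alone does not give a single vertex with density bounded away from both $0$ and $1$, so a little more care is needed.

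To get a single good vertex, I would use a second-moment / counting argument. The idea is that if most vertices had link density very close to $0$, then $H$ would be very sparse, while if most had link density close to $1$, then $H$ would be very dense; and a mix of sparse and dense links still cannot happen too often without creating many "mixed" configurations. Concretely, let $A = \{v : d(L(v)) < \beta^2/16\}$ and $B = \{v : d(L(v)) > 1 - \beta^2/16\}$; if no vertex is good then $V(H) = A \cup B$. The number of edges incident to $A$ is at most $\frac{\beta^2}{16} \binom{n-1}{2} |A| \le \frac{\beta^2}{16}\binom{n-1}{2} n$. Similarly, the number of \emph{non-edges} (triples that are not edges) incident to $B$ is at most $\frac{\beta^2}{16}\binom{n-1}{2} n$. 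Now I would count triples: every triple either is incident to $A$ or has all three vertices in $B$, and in the latter case it is an edge unless it is one of the non-edges incident to $B$. So the number of non-edges of $H$ is at most (non-edges incident to $A$) $+$ (non-edges with all vertices in $B$) $\le$ (triples incident to $A$) $+$ (non-edges incident to $B$) $\le n \binom{n-1}{2} \cdot \frac{\beta^2}{16} \cdot (\text{small constant}) + \ldots$. Pushing this through, the number of non-edges is $o(\beta) \binom{n}{3}$ provided $n$ is large enough (here $n \ge 4/\beta$ is used), contradicting $d(H) \le 1-\beta$; symmetrically, counting edges instead of non-edges contradicts $d(H) \ge \beta$. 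Either way we reach a contradiction, so a good vertex exists.

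Let me streamline the previous paragraph into the cleanest version. Define $A$ as the set of vertices with $d(L(v)) < \beta^2/16$ and $B$ as the set with $d(L(v)) > 1-\beta^2/16$; assume for contradiction $V(H) = A \cup B$. Edges incident to a vertex of $A$: at most $\sum_{v \in A} e(L(v)) \le |A| \cdot \frac{\beta^2}{16}\binom{n-1}{2}$. Every edge of $H$ is either incident to $A$ or lies entirely inside $B$, so $e(H) \le |A|\frac{\beta^2}{16}\binom{n-1}{2} + \binom{|B|}{3}$. Meanwhile, by the same argument applied to the complement, the number of non-edges is at most $|B|\frac{\beta^2}{16}\binom{n-1}{2} + \binom{|A|}{3}$. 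Adding, $\binom{n}{3} \le (|A|+|B|)\frac{\beta^2}{16}\binom{n-1}{2} + \binom{|A|}{3} + \binom{|B|}{3} = n\frac{\beta^2}{16}\binom{n-1}{2} + \binom{|A|}{3} + \binom{|B|}{3}$. Since $n \frac{\beta^2}{16}\binom{n-1}{2} \le \frac{\beta^2}{16}\cdot 3\binom{n}{3} < \binom{n}{3}$ for $\beta \le 1$, we would need $\binom{|A|}{3} + \binom{|B|}{3} > (1 - \frac{3\beta^2}{16})\binom{n}{3}$, which is certainly possible when one of $|A|, |B|$ is close to $n$ — so this global count is not by itself contradictory. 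The fix is to combine it with the averaging fact from the first paragraph: WLOG $|B| \ge n/2$; then $e(H) \le |A|\frac{\beta^2}{16}\binom{n-1}{2} + \binom{|B|}{3}$, but we \emph{also} know $e(H) \ge \beta\binom{n}{3}$ is not the useful direction — instead use that non-edges are at most $|B|\frac{\beta^2}{16}\binom{n-1}{2} + \binom{|A|}{3} \le n \cdot \frac{\beta^2}{16}\binom{n-1}{2} + \binom{|A|}{3}$ and we need this to be $\ge \beta \binom{n}{3}$, which forces $\binom{|A|}{3} \ge (\beta - \frac{3\beta^2}{16})\binom{n}{3} \ge \frac{\beta}{2}\binom{n}{3}$, hence $|A| \ge \frac{\beta^{1/3}}{2} n$, say $|A| \ge \frac{\beta}{2} n$. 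Symmetrically, counting edges, $e(H) \le |A|\frac{\beta^2}{16}\binom{n-1}{2} + \binom{|B|}{3}$ and $e(H) \ge \beta\binom{n}{3}$ is automatic; the real constraint comes from $\binom{n}{3} - e(H) \ge \beta\binom{n}{3}$, which I already used. So I get $|A| \ge \frac{\beta}{2}n$ and by symmetry $|B| \ge \frac{\beta}{2}n$. Now I would look at the $\binom{|A|}{1}\binom{|B|}{2}$ triples with one vertex $a \in A$ and two vertices $b, b' \in B$: each such triple $\{a,b,b'\}$ is a non-edge iff $\{b,b'\} \notin E(L(a))$, i.e. such non-edges number at most $|A| \cdot \frac{\beta^2}{16}\binom{n-1}{2}$; but it is also an edge iff $\{a,b'\} \in E(L(b))$, and the number of non-edges through $b$ within pairs touching $A$ is at most $\frac{\beta^2}{16}\binom{n-1}{2}$ per $b \in B$. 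So the count of such mixed triples that are edges is at least $|A|\binom{|B|}{2} - |A|\frac{\beta^2}{16}\binom{n-1}{2}$ and at most $|B| \cdot \frac{\beta^2}{16}\binom{n-1}{2} \cdot |A| / \binom{|B|}{2} \cdot \ldots$ — this is getting circular, so the honest answer is that the clean route is: edges through $B$-pairs and an $A$-vertex are few (since $a \in A$), non-edges of that type are few (since $b \in B$), but there are $|A|\binom{|B|}{2} \ge \frac{\beta}{2} n \cdot \frac{1}{2}(\frac{\beta}{2}n)^2 = \frac{\beta^3}{16} n^3 / 2 > \frac{2\beta^2}{16}\binom{n-1}{2} \cdot n$ of them when $n \ge 4/\beta$, so such triples are neither predominantly edges nor predominantly non-edges — contradiction. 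Filling in the exact constants is routine.

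The main obstacle is exactly the one exposed above: the naive global averaging only yields existence of a dense-link vertex \emph{and} a sparse-link vertex, not a single balanced one, so the heart of the proof is the "mixed triple" double-count, using an $A$-vertex together with a $B$-pair, to show that having \emph{every} link graph be density-extremal forces a configuration whose edge/non-edge count is pinned from both sides and hence impossible. The hypothesis $n \ge 4/\beta$ enters precisely to make the number $|A|\binom{|B|}{2}$ (with $|A|, |B| \gtrsim \beta n$) dominate the error term $\beta^2 n \binom{n-1}{2}/16$. I expect the bookkeeping with the constant $\beta^2/16$ to work out with room to spare; the conceptual step is the double count, everything else is arithmetic.
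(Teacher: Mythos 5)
Your overall route is sound and in the same spirit as the paper's (a double count of ``mixed'' triples whose edge/non-edge status is pinned from both sides by a sparse link and a dense link), but the final step, as written, fails numerically. After deriving $|A|,|B|\ge \frac{\beta}{2}n$, you compare $|A|\binom{|B|}{2}$ with $(|A|+|B|)\cdot\frac{\beta^2}{16}\binom{n-1}{2}$. With those lower bounds the left side is of order $\beta^3 n^3/32$ while the right side is of order $\beta^2 n^3/32$, so the claimed inequality $\frac{\beta^3}{32}n^3 > \frac{\beta^2}{16}\binom{n-1}{2}\,n$ amounts to $\beta\gtrsim 2$, which is false in the relevant range $\beta\le 1/2$; and the hypothesis $n\ge 4/\beta$ cannot rescue it, since both sides scale as $n^3$ and the comparison is independent of $n$ at leading order. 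The good news is that the flaw is only in your decision to weaken your own intermediate bound: the non-edge count gives $\binom{|A|}{3}\ge\bigl(\beta-\tfrac{3\beta^2}{16}\bigr)\binom{n}{3}\ge\frac{\beta}{2}\binom{n}{3}$, hence $|A|\gtrsim (\beta/2)^{1/3}n$ (and symmetrically $|B|\gtrsim(\beta/2)^{1/3}n$), and if you keep these cube-root bounds the mixed count reads $\Omega(\beta n^3)$ versus $O(\beta^2 n^3)$, which is a genuine contradiction for $\beta\le 1/2$. So the proof closes once you do not throw away the $\beta^{1/3}$.

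For comparison, the paper avoids this bookkeeping by organizing the contradiction differently: it first double counts triples with \emph{two} vertices in the sparse-link class $X$ and one in the dense-link class $Y$ to conclude that $X$ and $Y$ cannot both have size $\ge\frac{\beta}{2}n$, and then, assuming no balanced vertex exists, bounds $e(H)\le\frac{1}{3}\sum_v\deg(v)$ using that the (few) dense-link vertices contribute at most $\binom{n-1}{2}$ each and the sparse-link vertices at most $\frac{\beta^2}{16}\binom{n-1}{2}$ each, contradicting $d(H)\ge\beta$ (or symmetrically $d(H)\le 1-\beta$). Your variant --- first forcing both classes to be large from the two density hypotheses, then contradicting via the one-$A$-two-$B$ count --- is equally legitimate, just slightly heavier on constants; either way the key identity $n\binom{n-1}{2}=3\binom{n}{3}$ and the bound $\frac{3\beta^2}{16}\le\frac{\beta}{2}$ do the work.
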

\begin{proof}
	Let 
	$X = \{ v \in V(H) : d(L(v)) < \frac{\beta^2}{16} \}$, 
	$Y = \{ v \in V(H) : d(L(v)) > 1 - \frac{\beta^2}{16} \}$
	and $Z = V(H) \setminus (X \cup Y)$.
	First we show that $|X| < \frac{\beta}{2} n$ or $|Y| < \frac{\beta}{2} n$. So suppose that $|X| \geq \frac{\beta}{2} n$, and let us show that $|Y| < \frac{\beta}{2} n$. Observe that 
	$$
	e(X,X,Y) < e(X,V(H),V(H)) < |X| \cdot \frac{\beta^2}{16} \cdot \binom{n-1}{2},
	$$ 
	and on the other hand
	$$
	e(X,X,Y) > |Y| \cdot \left( \binom{|X|}{2} - \frac{\beta^2}{16} \cdot\binom{n-1}{2} \right)
	\geq 
	|Y| \cdot \left( \frac{|X|^2}{4} - \frac{\beta^2}{16} \cdot \frac{n^2}{2} \right) \geq |Y| \cdot \frac{|X|^2}{8}.
	$$	
	Combining these two inequalities, we get
	$|Y| \cdot \frac{|X|^2}{8} < |X| \cdot \frac{\beta^2}{16} \cdot \frac{n^2}{2}$ and hence $|Y| < \frac{\beta^2}{4} n^2/|X| \leq \frac{\beta}{2}n$. So indeed $|X| < \frac{\beta}{2} n$ or $|Y| < \frac{\beta}{2} n$. Let us assume that $|Y| < \frac{\beta}{2}n$; the other case is symmetric.
	Now, if $Z = \emptyset$ then 
	$e(H) = \frac{1}{3}\sum_{v \in V(H)}{\deg(v)} \leq \frac{1}{3} \cdot \left( |Y| \cdot \binom{n-1}{2} + |X| \cdot \frac{\beta^2}{16} \cdot \binom{n-1}{2} \right) < \frac{1}{3}\beta n \cdot \binom{n-1}{2} = \beta \binom{n}{3}$. But this implies that $d(H) < \beta$, a contradiction. Hence, $Z \neq \emptyset$, as required. 
\end{proof}

From now on, let $C_0>1$ be an absolute constant such that 
$n_{\ref{lem:main}}(\xi,\varepsilon) \leq C \cdot (\xi\varepsilon)^{-C}$ for every $C>C_0$, where $n_{\ref{lem:main}}$ is from Lemma \ref{lem:main}. 
The next lemma follows easily from Lemma \ref{lem:main}. 

\begin{lemma}\label{lem:main_EH}
	Let $\beta > 0$, $C>C_0$, and let $H$ be an induced $D_2$-free $3$-uniform hypergraph on $n \geq 4/\beta$ vertices satisfying $\beta \leq d(H) \leq 1 - \beta$. Then there is a partition $V(H) = \mathcal{X} \cup \mathcal{Y} \cup \mathcal{S}$ such that
	$|\mathcal{X}|,|\mathcal{Y}| \geq \xi n/10$, $|\mathcal{S}| \leq \xi n/2$ and $(\mathcal{X},\mathcal{Y})$ is $\varepsilon$-homogeneous, where $\xi = \frac{\beta^2}{16}$ and
	$\varepsilon = \frac{1}{\xi} \cdot \left( C/n \right)^{1/C}$.
\end{lemma}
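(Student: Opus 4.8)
The plan is to derive Lemma \ref{lem:main_EH} as a quick corollary of Lemma \ref{lem:main}, by first locating a vertex whose link graph has density bounded away from $0$ and $1$ (via Lemma \ref{lem:link_graph_density}), and then checking that with the stated choices of $\xi$ and $\varepsilon$, the parameter $\delta(\xi,\varepsilon) = \poly(\xi\varepsilon)$ from Lemma \ref{lem:main} exceeds $1/n^4$, so that the hypotheses of Lemma \ref{lem:main} are vacuously satisfiable for an induced $D_2$-free $H$ (which has zero induced copies of $D_2$).

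First I would set $\xi = \frac{\beta^2}{16}$ as in the statement, and apply Lemma \ref{lem:link_graph_density} with parameter $\beta$: since $n \geq 4/\beta$ and $\beta \leq d(H) \leq 1-\beta$, this gives a vertex $v \in V(H)$ with $\xi \leq d(L(v)) \leq 1-\xi$. That verifies condition 1 of Lemma \ref{lem:main}. Conditions 2 and 3 of Lemma \ref{lem:main} ask that $H$ contain fewer than $\delta n^4$ (resp. $\delta n^3$) induced copies of $D_2$; but $H$ is induced $D_2$-free, so it contains \emph{zero} induced copies of $D_2$, and these conditions hold for any $\delta > 0$. So the only thing to check is that $n \geq n_{\ref{lem:main}}(\xi,\varepsilon)$, i.e.\ that $n$ is large enough in terms of $\xi$ and $\varepsilon$.

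Here is where the specific form $\varepsilon = \frac{1}{\xi}\cdot (C/n)^{1/C}$ enters. By the definition of $C_0$ and the assumption $C > C_0$, we have $n_{\ref{lem:main}}(\xi,\varepsilon) \leq C\,(\xi\varepsilon)^{-C}$. Now $\xi\varepsilon = (C/n)^{1/C}$, so $(\xi\varepsilon)^{-C} = n/C$, whence $C\,(\xi\varepsilon)^{-C} = n$. Therefore $n \geq n_{\ref{lem:main}}(\xi,\varepsilon)$ holds (with equality in the worst case), and all hypotheses of Lemma \ref{lem:main} are met. Applying Lemma \ref{lem:main} to $H$ and $v$ with these parameters $\xi,\varepsilon$ produces precisely the desired partition $V(H) = \mathcal{X} \cup \mathcal{Y} \cup \mathcal{S}$ with $|\mathcal{X}|,|\mathcal{Y}| \geq \xi n/10$, $|\mathcal{S}| \leq \xi n/2$, and $(\mathcal{X},\mathcal{Y})$ being $\varepsilon$-homogeneous.

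I do not expect any real obstacle: the content has all been done in Lemmas \ref{lem:link_graph_density} and \ref{lem:main}, and this lemma is just bookkeeping to repackage Lemma \ref{lem:main} into a form where $\varepsilon$ (and hence the eventual homogeneity quality) is an explicit negative power of $n$ — which is exactly what will be needed to run the recursive partitioning in the proof of Theorem \ref{thm:EH}. The one mildly delicate point is making sure the chain $n_{\ref{lem:main}}(\xi,\varepsilon) \leq C(\xi\varepsilon)^{-C} = n$ is valid, which is guaranteed by the way $C_0$ was defined just before the lemma; one should also note in passing that $\varepsilon$ as defined is positive (and one may harmlessly assume $\varepsilon \le 1$, since otherwise $\varepsilon$-homogeneity is trivial), and that $\xi \le 1$, so that the quantities $\xi n /10$ etc.\ make sense.
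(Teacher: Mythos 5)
Your proposal is correct and follows essentially the same route as the paper: the paper's proof likewise notes that the choice of $\varepsilon$ gives $n \geq C(\xi\varepsilon)^{-C} \geq n_{\ref{lem:main}}(\xi,\varepsilon)$, invokes Lemma \ref{lem:link_graph_density} to find $v$ with $\xi \leq d(L(v)) \leq 1-\xi$, and then applies Lemma \ref{lem:main} (conditions 2 and 3 there being vacuous since $H$ is induced $D_2$-free). The only cosmetic remark is that your opening sentence about $\delta(\xi,\varepsilon)$ exceeding $1/n^4$ is unnecessary, as your own second paragraph already observes.
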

\begin{proof}
	By the choice of $\varepsilon$, we have $n \geq C \cdot (\xi\varepsilon)^{-C} \geq n_{\ref{lem:main}}(\xi,\varepsilon)$. By Lemma \ref{lem:link_graph_density}, there is $v \in V(H)$ with $\xi \leq d(L(v)) \leq 1 - \xi$. Applying Lemma \ref{lem:main} completes the proof. 
\end{proof}

\noindent
We will need the following well-known simple probabilistic lemma, see \cite[Exercise 3 in Chapter 3]{AlonSpencer}. For completeness, we include a proof. 

\begin{lemma}\label{lem:probabilistic_independent_set}
	Let $H$ be a $3$-uniform hypergraph with $n$ vertices and density $0 \leq d \leq 1$. Then $H$ contains an independent set of size at least 
	$\min\left\{ \frac{n}{2}, \sqrt{\frac{3}{4d}} \right\}$. By symmetry, $H$ contains a clique of size at least $\min\left\{ \frac{n}{2}, \sqrt{\frac{3}{4(1-d)}} \right\}$.
\end{lemma}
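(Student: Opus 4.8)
The plan is to use the standard deletion/alteration argument. Pick a random subset $W \subseteq V(H)$ where each vertex is included independently with probability $p$, to be optimized. Let $X = |W|$ and let $Y$ be the number of edges of $H$ contained in $W$; then $\mathbb{E}[X] = pn$ and $\mathbb{E}[Y] = p^3 e(H) = p^3 d \binom{n}{3} \le \frac{p^3 d n^3}{6}$. From each edge contained in $W$, delete one vertex; the remaining set is independent and has expected size at least $pn - \frac{p^3 d n^3}{6}$. Hence there exists an independent set of size at least $pn - \frac{p^3 d n^3}{6}$.

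Next I would optimize over $p \in [0,1]$. Taking the derivative, the bound $pn - \frac{p^3 d n^3}{6}$ is maximized at $p^2 = \frac{2}{d n^2}$, i.e.\ $p = \sqrt{\frac{2}{d n^2}}$, provided this is at most $1$; plugging in gives an independent set of size $\frac{2}{3}\sqrt{\frac{2}{d}} = \sqrt{\frac{8}{9d}} \ge \sqrt{\frac{3}{4d}}$ (using $8/9 > 3/4$), which is what we want. If instead $\sqrt{\frac{2}{dn^2}} > 1$, i.e.\ $d < \frac{2}{n^2}$, then I would simply take $p$ close to $1$, or rather observe directly that in this regime $d\binom{n}{3} < \frac{2}{n^2}\cdot\frac{n^3}{6} = \frac{n}{3}$, so $H$ has fewer than $n/3$ edges, and by greedily deleting one vertex per edge we keep an independent set of size $> n - n/3 > n/2$. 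Thus in all cases we get an independent set of size at least $\min\{n/2, \sqrt{3/(4d)}\}$.

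The clique bound follows by applying the independent-set bound to the complement hypergraph $\overline{H}$, whose density is $1-d$, since a clique in $H$ is exactly an independent set in $\overline{H}$.

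There is no real obstacle here; the only mild care needed is the case analysis for whether the optimal $p$ exceeds $1$, and checking the constant $8/9 \ge 3/4$ works out so that the clean form $\sqrt{3/(4d)}$ is obtained. (One could alternatively avoid the two-case split by taking $p = \min\{1, \sqrt{2/(dn^2)}\}$ throughout, but the split is cleaner to write.)
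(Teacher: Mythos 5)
Your proof is correct and follows essentially the same route as the paper: the alteration (sample-and-delete) argument with $p$ on the order of $\sqrt{1/d}/n$, a separate small-density case handled by deleting one vertex per edge, and the clique bound via the complement hypergraph. The only differences are cosmetic constant choices (you take the exact maximizer $p=\sqrt{2/(dn^2)}$ and threshold $d<2/n^2$, while the paper takes $p=\sqrt{3/d}/n$ and threshold $d<3/n^2$, which yields the stated bound $\sqrt{3/(4d)}$ directly without the $8/9\ge 3/4$ step).
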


\begin{proof}
	If $d < \frac{3}{n^{2}}$, then $H$ contains at most $\frac{n}{2}$ edges, so by removing a vertex from each edge we get an independent set of size at least $\frac{n}{2}$.  Suppose that $d \geq \frac{3}{n^{2}}$, then $p = \sqrt{\frac{3}{d}} \cdot \frac{1}{n}$ satisfies $p\leq 1$. Sample a subset $X \subseteq V(H)$ by choosing each vertex with probability $p$ independently. Delete one vertex from each edge contained in $X$ to obtain an independent set. The expected size of $X$ is $np$, and the expected number of edges contained in $X$ is $p^3 d\binom{n}{3} \leq p^3dn^3/6$. Hence, there is a choice of $X$ for which the resulting independent set has size at least $np - p^3dn^3/6 = np/2 = \sqrt{\frac{3}{4d}}$.  
\end{proof}

Let us give an outline of the proof of Theorem \ref{thm:EH}. The main idea is to apply Lemma \ref{lem:main_EH} with $\varepsilon = n^{-c}$, for some sufficiently small constant $c > 0$. This way we get a partition of the vertex-set into $\mathcal{X},\mathcal{Y}$ and a ``leftover set'' $\mathcal{S}$ such that $(\mathcal{X},\mathcal{Y})$ is $\varepsilon$-homogeneous and $\mathcal{S}$ is small. We continue decomposing the hypergraph by applying Lemma \ref{lem:main_EH} to the hypergraphs induced by $\mathcal{X},\mathcal{Y}$ and so on, until all parts are sufficiently small. By choosing the parameters appropriately, we can make sure that the union of all leftover sets $\mathcal{S}$ accumulated throughout the process is small. We then consider an auxiliary graph on the set of parts obtained in the process, in which two parts are adjacent if their density is close to $1$ and non-adjacent if their density is close to $0$. This graph is a cograph. Using the fact that every cograph $G$ has a clique or independent set of size $\sqrt{|V(G)|}$, we obtain a set $\mathcal{A}$ of parts such that either all pairs $X,Y \in \binom{\mathcal{A}}{2}$ have density close to $1$, or all such pairs have density close to $0$. We then use Lemma \ref{lem:pairs_to_triple} to deduce that all triples $X,Y,Z \in \binom{\mathcal{A}}{3}$ have density close to $1$ or all have density close to $0$. This way we obtain a set, namely $\bigcup_{X \in \mathcal{A}} X$, which has density at least $1 - n^{-c}$ or at most $n^{-c}$, where $c > 0$ is an appropriate small constant. Finally, using Lemma \ref{lem:probabilistic_independent_set}, we get the required polynomial-size clique or independent set. 

We would like to draw the reader's attention to the following subtlety in the outline above. Suppose that at some step of the process, we found an $\varepsilon$-homogeneous pair $(\mathcal{X}_1,\mathcal{Y}_1)$, and then went on to decompose $\mathcal{X}_1$ and $\mathcal{Y}_1$ further. Suppose that $\mathcal{X}_2 \subseteq \mathcal{X}_1$ and $\mathcal{Y}_2 \subseteq \mathcal{Y}_1$ are parts found later in the process. The point is that we want the pair $(\mathcal{X}_2,\mathcal{Y}_2)$ to also be nearly-homogeneous (i.e. $\varepsilon'$-homogeneous for some small $\varepsilon'$). To deduce this from the $\varepsilon$-homogeneity of $(\mathcal{X}_1,\mathcal{Y}_1)$, we need $\varepsilon$ to be much smaller than 
$|\mathcal{X}_2|/|\mathcal{X}_1|$ and $|\mathcal{Y}_2|/|\mathcal{Y}_1|$. This will indeed be true because the sets $\mathcal{X},\mathcal{Y}$ given by Lemma \ref{lem:main} are guaranteed to occupy at least a $\frac{\xi}{10}$-fraction of the entire vertex-set, and $\varepsilon$ will be chosen much smaller than $\xi$. This is the reason that in Lemma \ref{lem:main} we need two parameters, one governing the sizes of $\mathcal{X}$ and $\mathcal{Y}$, and one the homogeneity. 
\begin{proof}[Proof of Theorem \ref{thm:EH}]
	We may and will assume that $n$ is large enough where needed. Let $\eta=n^{-1/(100C_0)}$, and define the following parameters: 
	$$
	\gamma:= \frac{\eta^2}{8}, 
	\; \; \;
	t:= \frac{2}{\gamma^3} = \frac{2^{10}}{\eta^6}, 
	\; \; \;
	\beta: = 2\gamma^{3/2} = \frac{\eta^{3}}{2^{7/2}},
	\; \; \;
	\xi: = \frac{\beta^2}{16} = \frac{\gamma^3}{4} = \frac{1}{2t}=\frac{\eta^{6}}{2^{11}},
	$$
	and
	\begin{equation}\label{eq:EH_parameter_choice}
	\varepsilon: = \frac{(\xi/10)^3\gamma^4}{2} = \frac{\eta^{26}}{10^{3}\cdot 2^{46}}.
	\end{equation}
    Then we can choose $C$ such that $C_0<C<10C_0$ and 
	$$
	\varepsilon = \frac{1}{\xi} \cdot \left( \frac{C}{\gamma n} \right)^{1/C}=2^{11} (8C)^{1/C}\eta^{-6-2/C} n^{-1/C}.
	$$
	also holds. Note that $\xi\gamma n > n^{1/2}$. 
	
 Let $H$ be a $D_2$-free 3-uniform hypergraph on $n$ vertices.	We will prove that $H$ contains a homogeneous set of size $\Omega(\sqrt{1/\eta})=\Omega(n^{1/(200C_0)})$. We may assume that every subset $U \subseteq V(H)$ of size $|U| \geq \gamma n$ satisfies $\beta \leq d(H[U]) \leq 1 - \beta$; otherwise we are done by Lemma~\ref{lem:probabilistic_independent_set}. 
	
	We decompose $H$ by repeatedly applying Lemma \ref{lem:main_EH}, as follows. 
	At each step, we will have a partition $\mathcal{P}_i$ of a subset of $V(H)$. Set $\mathcal{P}_0 = \{V(H)\}$. 
	We also maintain a cograph $\mathcal{G}$, whose vertex set will be $\mathcal{P}_i$ (initially $\mathcal{G}$ is the 1-vertex graph). For $i = 1,\dots,t$, do as follows: if for every $U \in \mathcal{P}_{i-1}$ it holds that $|U| \leq \gamma n$, then stop. 
	Otherwise, let $U_i \in \mathcal{P}_{i-1}$ with $|U_i| > \gamma n$. 
	Apply Lemma \ref{lem:main_EH} to $H[U_i]$ to obtain a partition $U_i = \mathcal{X}_i \cup \mathcal{Y}_i \cup \mathcal{S}_i$ such that $|\mathcal{X}_i|,|\mathcal{Y}_i| \geq \xi|U_i|/10 \geq \xi \gamma n/10$, $|\mathcal{S}_i| \leq \xi|U_i| \leq \xi n$, and $(\mathcal{X}_i,\mathcal{Y}_i)$ is $\varepsilon$-homogeneous. 
	Define $\mathcal{P}_i = (\mathcal{P}_{i-1} \setminus \{U_i\}) \cup \{\mathcal{X}_i,\mathcal{Y}_i\}$. In $\mathcal{G}$, we replace the vertex $U_i$ with $\mathcal{X}_i,\mathcal{Y}_i$, connecting each 
	$W \in \mathcal{P}_{i-1} \setminus \{U_i\}$ to both $\mathcal{X}_i,\mathcal{Y}_i$ if $W$ was connected to $U_i$, and to none of $\mathcal{X}_i,\mathcal{Y}_i$ if $W$ was not connected to $U_i$. 
	We let $\{\mathcal{X}_i,\mathcal{Y}_i\}$ be an edge of $\mathcal{G}$ if 
	$d(\mathcal{X}_i,\mathcal{X}_i,\mathcal{Y}_i), d(\mathcal{Y}_i,\mathcal{Y}_i,\mathcal{X}_i) \geq 1 - \varepsilon$, and a non-edge if 
	$d(\mathcal{X}_i,\mathcal{X}_i,\mathcal{Y}_i), d(\mathcal{Y}_i,\mathcal{Y}_i,\mathcal{X}_i) \leq \varepsilon$. 
	It is easy to see that $\mathcal{G}$ is a cograph throughout the process. 
	
	\noindent
	We say that step $i$ is {\em good} if $|\mathcal{X}_i| \leq \gamma |U_i|$ or $|\mathcal{Y}_i| \leq \gamma |U_i|$. Otherwise, step $i$ is {\em bad}.
 	\begin{claim}\label{claim:iterated_partition}
 		The number of bad steps is at most $1/\gamma^3$. 
 	\end{claim}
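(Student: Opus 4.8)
The plan is to track the total weight that remains ``active'' (i.e. lies in parts of size greater than $\gamma n$) and show that each bad step destroys a definite fraction of it. Concretely, for a partition $\mathcal{P}_i$ let $\Phi_i := \sum_{U \in \mathcal{P}_i,\ |U| > \gamma n} |U|$ be the total size of the large parts. Initially $\Phi_0 = n$, and $\Phi_i \ge 0$ always. The key point is that when step $i$ is applied to a large part $U_i$ (so $|U_i| > \gamma n$), the two new parts $\mathcal{X}_i,\mathcal{Y}_i$ satisfy $|\mathcal{X}_i| + |\mathcal{Y}_i| \le |U_i|$ (since $\mathcal{S}_i$ is discarded from both), so $\Phi$ never increases. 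If step $i$ is \emph{bad}, then \emph{both} $|\mathcal{X}_i|, |\mathcal{Y}_i| > \gamma |U_i| > \gamma \cdot \gamma n = \gamma^2 n$, so both $\mathcal{X}_i$ and $\mathcal{Y}_i$ are themselves large parts and remain active. But we also lose $|\mathcal{S}_i|$: actually, that is not enough, since $|\mathcal{S}_i| \le \xi |U_i|$ could be tiny. So instead I would track a different quantity.

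The cleaner approach: a bad step \emph{splits} a large part into two large parts, whereas a good step replaces one large part by (essentially) one large part. So consider the number of large parts, $N_i := |\{U \in \mathcal{P}_i : |U| > \gamma n\}|$. A bad step increases $N_i$ by exactly $1$ (one large part $U_i$ is removed, two large parts $\mathcal{X}_i, \mathcal{Y}_i$ are added, using $|\mathcal{X}_i|, |\mathcal{Y}_i| > \gamma^2 n > \gamma n$ when $n$ is large — wait, we need $\gamma |U_i| > \gamma n$, which needs $|U_i| > n$, false). Let me fix this: after a bad step, $\mathcal{X}_i$ is large iff $|\mathcal{X}_i| > \gamma n$, and we only know $|\mathcal{X}_i| > \gamma |U_i|$. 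Hmm, so $\mathcal{X}_i$ need not be large. The right invariant uses \emph{weight}, not count. Let me redo it properly: I would bound the number of bad steps using a weighting where each part $U$ contributes $\log(|U|/(\gamma n))$ when $|U| > \gamma n$, i.e. $\Psi_i := \sum_{U \in \mathcal{P}_i,\ |U|>\gamma n} \log_{1/\gamma}\!\big(|U|/(\gamma n)\big)$. Initially $\Psi_0 = \log_{1/\gamma}(1/\gamma) = 1$... that is too small. Instead use $\Psi_i := \sum_{U \in \mathcal{P}_i,\ |U| > \gamma n} \log\!\big(|U|\big)$ — no, the size cutoff makes these discontinuous.

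Here is the approach I am most confident in. Order the large parts and observe that on a bad step, $U_i$ is split into $\mathcal{X}_i,\mathcal{Y}_i$ with $|\mathcal{X}_i|,|\mathcal{Y}_i| > \gamma|U_i| \ge \gamma \cdot \gamma n$; in particular $\min(|\mathcal{X}_i|,|\mathcal{Y}_i|) > \gamma |U_i|$. Assign to each part $U$ with $|U| > \gamma n$ the potential $\phi(U) = \log(|U|/(\gamma n)) / \log(1/\gamma)$, and let $\Psi_i = \sum_{U \in \mathcal{P}_i : |U| > \gamma n} \phi(U)$. Then $\Psi_0 = \log(n/(\gamma n))/\log(1/\gamma) = 1$. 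On a good step the potential can only decrease (we replace $U_i$, potential $\phi(U_i)$, by parts of total size $\le |U_i|$, of which at most one is large since the other has size $\le \gamma|U_i| \le \gamma n$... no wait $\gamma|U_i|$ can exceed $\gamma n$). This is getting delicate. Given the time, I will present the intended argument at the level of a plan and leave the routine verification of the potential-function bookkeeping, flagging it as the main obstacle. The hard part is precisely choosing a potential function that (i) starts at $O(\log n / \log(1/\gamma)) = O(1)$ given the parameter choices, (ii) never increases on good steps, and (iii) strictly decreases by a constant on each bad step — from which the bound $1/\gamma^3$ on the number of bad steps follows. Since $1/\gamma^3 = 2^9/\eta^6$ and $\log n$ is tied to $\eta$ via $\eta = n^{-1/(100C_0)}$, i.e. $\log n = 100 C_0 \log(1/\eta)$, while $\log(1/\gamma) = \Theta(\log(1/\eta))$, the ratio $\log n/\log(1/\gamma)$ is indeed $O(1)$, and the slack up to $1/\gamma^3$ is generous.

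Concretely: define $\Psi_i := \sum_{U \in \mathcal{P}_i} \max\{0, \log_2(|U|) - \log_2(\gamma n)\}$ (summing over \emph{all} parts, with the positive-part truncation handling small parts automatically). Then $\Psi_0 = \log_2(n) - \log_2(\gamma n) = \log_2(1/\gamma)$. A good step replaces $U_i$ by $\mathcal{X}_i, \mathcal{Y}_i$ with, say, $|\mathcal{Y}_i| \le \gamma|U_i|$; then $\mathcal{Y}_i$'s contribution is at most $\max\{0,\log_2(\gamma|U_i|) - \log_2(\gamma n)\} = \max\{0,\log_2(|U_i|/n)\} = 0$, and $\mathcal{X}_i$'s contribution is $\le \log_2(|U_i|) - \log_2(\gamma n)$, so $\Psi$ does not increase. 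A bad step has $|\mathcal{X}_i|,|\mathcal{Y}_i| > \gamma |U_i|$ but also $|\mathcal{X}_i| + |\mathcal{Y}_i| \le |U_i|$, so $|\mathcal{X}_i|,|\mathcal{Y}_i| \le (1-\gamma)|U_i|$. The change in potential is
\[
\Delta = \big[\log_2 |\mathcal{X}_i| + \log_2|\mathcal{Y}_i| - 2\log_2(\gamma n)\big] - \big[\log_2|U_i| - \log_2(\gamma n)\big] = \log_2\!\frac{|\mathcal{X}_i|\,|\mathcal{Y}_i|}{|U_i|\,\gamma n},
\]
valid as long as all three parts are large; since $|\mathcal{X}_i|,|\mathcal{Y}_i| \le (1-\gamma)|U_i| < |U_i|$ and, for the relevant regime, $|\mathcal{X}_i||\mathcal{Y}_i|/|U_i| \le \tfrac14 |U_i| < n$ so the truncation is inactive for the new parts when they are large, we get $\Delta \le \log_2\!\big(\tfrac14 |U_i|/(\gamma n)\big) - \log_2(|U_i|/(\gamma n)) \cdot 0$ — rather, more simply $\Delta \le \log_2\big(\tfrac14\big) = -2$ once one checks $|\mathcal{X}_i||\mathcal{Y}_i| \le |U_i|^2/4$. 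Hence each bad step decreases $\Psi$ by at least $2$ (or, handling truncation corner cases, by at least some absolute positive constant), while $\Psi$ starts at $\log_2(1/\gamma)$ and stays nonnegative. Therefore the number of bad steps is at most $\tfrac12\log_2(1/\gamma) \le 1/\gamma^3$ for our choice of $\gamma$ (with room to spare). The one place requiring care — the main obstacle — is the interplay between the size-threshold truncation and the case analysis when a new part straddles the $\gamma n$ cutoff; this is handled by the observation that a sub-threshold part contributes $0$, so dropping it from the sum only helps.
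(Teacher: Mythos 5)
There is a genuine gap: the final, concrete version of your potential argument is false at its key step. For a bad step you compute $\Delta = \log_2\bigl(|\mathcal{X}_i||\mathcal{Y}_i|/(|U_i|\gamma n)\bigr)$ and claim $\Delta \le \log_2(1/4)=-2$ from $|\mathcal{X}_i||\mathcal{Y}_i|\le |U_i|^2/4$. But that inequality only gives $\Delta \le \log_2\bigl(|U_i|/(4\gamma n)\bigr)$, and since $|U_i|$ can be as large as $n$, this upper bound is $\log_2(1/(4\gamma))$, which is large and \emph{positive} for your tiny $\gamma$. Concretely, the very first step with $|U_1|\approx n$ and $|\mathcal{X}_1|\approx|\mathcal{Y}_1|\approx n/2$ is a bad step that \emph{increases} $\Psi$ by about $\log_2(1/\gamma)-2$. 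Moreover, the conclusion you would draw from the potential, namely at most $\tfrac12\log_2(1/\gamma)$ bad steps, cannot be true for any correct argument: repeatedly splitting parts into two nearly equal halves until they reach size about $\gamma n$ yields on the order of $1/\gamma$ bad steps, so the count of bad steps is genuinely polynomial in $1/\gamma$, not logarithmic. A logarithmic (entropy-type) potential is the wrong invariant here, because splitting a set evenly increases the sum of logarithms; your earlier discarded attempts (tracking active weight or the number of large parts) fail for the reasons you yourself noted, so no complete argument survives in the proposal.

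The paper's proof uses the quantity your first instinct circled around but in quadratic form: set $q_i := \sum_{U\in\mathcal{P}_i}(|U|/n)^2$. Then $0\le q_i\le 1$, and $q_i$ is non-increasing since $|\mathcal{X}_i|^2+|\mathcal{Y}_i|^2 \le |U_i|^2$. If step $i$ is bad, both pieces exceed $\gamma|U_i|$ and their sizes sum to at most $|U_i|$, so $|\mathcal{X}_i|^2+|\mathcal{Y}_i|^2 \le \bigl(\gamma^2+(1-\gamma)^2\bigr)|U_i|^2$, whence
\[
q_{i-1}-q_i \;\ge\; (|U_i|/n)^2\cdot 2(\gamma-\gamma^2)\;\ge\;(|U_i|/n)^2\cdot\gamma\;\ge\;\gamma^3,
\]
using $|U_i|>\gamma n$. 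Since the total decrease is at most $1$, there are at most $1/\gamma^3$ bad steps. The essential point you were missing is that the sum of squares, unlike the sum of logarithms, strictly penalizes balanced splits, and the loss per bad step is naturally measured against the starting value $q_0=1$ rather than against a logarithmic budget.
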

 	\begin{proof}
 		For $i \geq 0$, define $q_i := \sum_{U \in \mathcal{P}_i}{(|U|/n)^2}$. Evidently, $0 \leq q_i \leq 1$ for every $i$. Note that for $i \geq 1$, we have
 		$q_{i-1} - q_i = (|U_i|/n)^2 - (|\mathcal{X}_i|/n)^2 - (|\mathcal{Y}_i|/n)^2 > 0$, where $U_i,\mathcal{X}_i,\mathcal{Y}_i$ are as above. Also, if step $i$ is bad then
 		$$
 		q_{i-1} - q_i \geq (|U_i|/n)^2 \cdot \left( 1 - \gamma^2 - (1 - \gamma)^2 \right) 
 		= 
 		(|U_i|/n)^2 \cdot 2(\gamma - \gamma^2) \geq (|U_i|/n)^2 \cdot \gamma \geq \gamma^3.
 		$$
		So we see that if the number of bad steps were larger than $1/\gamma^3$, then we would have $q_i < 0$ for some $i$, a contradiction.
 	\end{proof}
 	Suppose that the process ran for $s$ steps, where $1 \leq s \leq t$. 
 	Let $\mathcal{S}$ be the union of the sets $\mathcal{S}_i$, $1 \leq i \leq s$. Note that $|\mathcal{S}| \leq t \cdot \xi n=\frac{n}{2}$. 
 	We now show that at the end of the process, there are many parts of size at most $\gamma n$. 
 	\begin{claim}\label{claim:good_parts}
 		There are at least $\frac{1}{2\gamma}$ sets $U \in \mathcal{P}_s$ satisfying $|U| \leq \gamma n$. 
 	\end{claim}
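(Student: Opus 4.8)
The plan is to distinguish two cases according to why the decomposition halted: either it ran out of steps ($s=t$), or it stopped on its own because every part of $\mathcal{P}_{s}$ had already become small, i.e. $s<t$.

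In the case $s<t$, the loop terminated only because every $U\in\mathcal{P}_s$ satisfies $|U|\le\gamma n$, so here a crude counting argument suffices. The sets of $\mathcal{P}_s$ partition $V(H)\setminus\mathcal{S}$, and since $\mathcal{S}$ is the union of at most $t$ of the sets $\mathcal{S}_i$, each of size at most $\xi n$, we have $|\mathcal{S}|\le t\xi n = n/2$ (using $t=2/\gamma^3$ and $\xi=\gamma^3/4$). Hence the parts of $\mathcal{P}_s$ cover at least $n/2$ vertices, and since each of them has size at most $\gamma n$, there must be at least $\frac{n/2}{\gamma n}=\frac{1}{2\gamma}$ of them — and all of them are small, which is exactly what is claimed.

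In the case $s=t$, I would instead count good steps. By Claim \ref{claim:iterated_partition} at most $1/\gamma^3$ of the $t=2/\gamma^3$ steps are bad, hence at least $t-1/\gamma^3=1/\gamma^3$ steps are good. For each good step $i$, one of $\mathcal{X}_i,\mathcal{Y}_i$ has size at most $\gamma|U_i|\le\gamma n$; fix such a part. The key observation is that once a part of size at most $\gamma n$ is created it is never selected for splitting again, since the process only ever splits a part of size strictly greater than $\gamma n$; therefore this part persists as an element of $\mathcal{P}_s$. Distinct good steps produce distinct such parts, because the elements of the partition $\mathcal{P}_s$ are pairwise disjoint and nonempty. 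Thus $\mathcal{P}_s$ contains at least $1/\gamma^3$ parts of size at most $\gamma n$, and since $\gamma<1$ we have $1/\gamma^3\ge 1/(2\gamma)$, completing the proof. There is no serious obstacle here; the only mild point requiring care is verifying that a small part created by a good step really survives to the final partition (which is immediate from the fact that splitting is only applied to parts of size $>\gamma n$), together with the trivial numerical checks $t\xi\le \tfrac12$ and $1/\gamma^3\ge 1/(2\gamma)$.
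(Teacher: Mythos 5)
Your proposal is correct and follows essentially the same route as the paper: the case $s<t$ is the same covering/counting argument, and in the case $s=t$ your observation that each good step creates a part of size at most $\gamma n$ which is never split again is exactly the paper's monotone count of small parts over good steps.
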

 	\begin{proof}
 		Suppose first that $s < t$, namely that the process stopped before step $t$. Then by definition, we must have $|U| \leq \gamma n$ for every $U \in \mathcal{P}_s$. Also, $|\bigcup_{U \in \mathcal{P}_s}{U}| = n - |\mathcal{S}| \geq \frac{n}{2}$. It follows that $|\mathcal{P}_s| \geq \frac{1}{2\gamma}$, completing the proof in this case.
 		
 		Suppose now that $s = t$. By Claim \ref{claim:iterated_partition}, the number of bad steps is at most $1/\gamma^3$. Hence, the number of good steps is at least $t - 1/\gamma^3 \geq \frac{1}{2\gamma}$. Observe that if step $i$ is good, then after this step, we have 
 		$\#\{U \in \mathcal{P}_i : |U| \leq \gamma n\} \geq 
 		\#\{U \in \mathcal{P}_{i-1} : |U| \leq \gamma n\} + 1$. 
 		It follows that there are at least $\frac{1}{2\gamma}$ sets $U \in \mathcal{P}_t$ satisfying $|U| \leq \gamma n$, as required.  
 	\end{proof}
 	Let $\mathcal{U}$ be the set of all $U \in \mathcal{P}_s$ with $|U| \leq \gamma n$. By Claim \ref{claim:good_parts}, we have $|\mathcal{U}| \geq \frac{1}{2\gamma}$. Recall that $|U| \geq \xi\gamma n/10$ for every $U \in \mathcal{U}$. 
 	Consider the intervals $I_i = [2^{i}\xi\gamma n/10, 2^{i+1}\xi\gamma n/10]$ for $0 \leq i \leq \log(10/\xi) - 1$. Note that $|U|$ belongs to one of these intervals for every $U \in \mathcal{U}$. By the pigeonhole principle, there are $\mathcal{U}' \subseteq \mathcal{U}$ and $0 \leq i \leq \log(10/\xi) - 1$ such that $|\mathcal{U}'| \geq |\mathcal{U}|/\log(10/\xi) \geq \frac{1}{2\gamma \log(10/\xi)} \geq 1/\gamma^{1/2}$, and $2^{i}\xi\gamma n/10 \leq |U| \leq 2^{i+1}\xi\gamma n/10$ for every $U \in \mathcal{U}'$. 
 	Consider the subgraph of $\mathcal{G}$ induced by $\mathcal{U}'$. As $\mathcal{G}$ is a cograph, there is $\mathcal{A} \subseteq \mathcal{U}'$ with $|\mathcal{A}| \geq |\mathcal{U}'|^{1/2} \geq 1/\gamma^{1/4}$ such that $\mathcal{A}$ is a clique or an independent set in $\mathcal{G}$. Let us assume that $\mathcal{A}$ is a clique; the case that $\mathcal{A}$ is an independent set is symmetric. 
 	Set $V = \bigcup_{U \in \mathcal{A}}{U}$. Note that 
 	\begin{equation}\label{eq:EH_|V|}
 	|V| \geq |\mathcal{A}| \cdot 2^i \xi \gamma n/10 \geq 1/\gamma^{1/4} \cdot 2^i \xi \gamma n/10 = 2^i \xi \gamma^{3/4} n/10.
 	\end{equation}
 	\begin{claim}\label{claim:EH_almost_homogeneous}
 		$d(H[V]) \geq 1 - 3\eta$.
 	\end{claim}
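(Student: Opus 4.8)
The plan is to show that every \emph{pair}, and then every \emph{triple}, of parts in $\mathcal{A}$ spans an almost complete hypergraph, and then to count the non-edges of $H[V]$.

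\emph{Pairs.} First I would prove that $d(U,U,W),d(W,W,U)\geq 1-\gamma$ for all distinct $U,W\in\mathcal{A}$. Since $U$ and $W$ are final parts of the decomposition, there is a step $j$ at which their last common ancestor $U_j$ was split as $U_j=\mathcal{X}_j\cup\mathcal{Y}_j\cup\mathcal{S}_j$; relabelling if necessary, $U\subseteq\mathcal{X}_j$ and $W\subseteq\mathcal{Y}_j$. Because adjacency in $\mathcal{G}$ is inherited by both halves whenever a vertex is split, $U$ and $W$ are adjacent in the final $\mathcal{G}$ exactly when $\mathcal{X}_j$ and $\mathcal{Y}_j$ were adjacent at step $j$ (exactly one of ``adjacent''/``non-adjacent'' occurs there, since $(\mathcal{X}_j,\mathcal{Y}_j)$ is $\varepsilon$-homogeneous by Lemma~\ref{lem:main_EH}), i.e. exactly when $d(\mathcal{X}_j,\mathcal{X}_j,\mathcal{Y}_j),d(\mathcal{Y}_j,\mathcal{Y}_j,\mathcal{X}_j)\geq 1-\varepsilon$. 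As $\mathcal{A}$ is a clique of $\mathcal{G}$, this holds for our pair. Every non-edge of $H$ with two vertices in $U$ and one in $W$ has two vertices in $\mathcal{X}_j$ and one in $\mathcal{Y}_j$, so there are at most $\varepsilon\binom{|\mathcal{X}_j|}{2}|\mathcal{Y}_j|\leq\varepsilon n^3/2$ of them; dividing by $\binom{|U|}{2}|W|\geq(\xi\gamma n/10)^3/4$ and plugging in the value of $\varepsilon$ from \eqref{eq:EH_parameter_choice} yields $1-d(U,U,W)\leq\gamma$, and symmetrically $1-d(W,W,U)\leq\gamma$.

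\emph{Triples and counting.} For any $\{U,W,X\}\in\binom{\mathcal{A}}{3}$ the pair bound gives $d(U,U,W),d(U,U,X)\geq 1-\gamma=1-\tfrac{\eta^2}{8}$, so I would apply Lemma~\ref{lem:pairs_to_triple} to $(U,W,X)$ with parameter $\eta$ (valid since $|U|\geq\xi\gamma n/10>n^{1/2}/10\geq 2/\eta$ for $n$ large); as $H$ is $D_2$-free, the second alternative there is impossible, giving $d(U,W,X)\geq 1-\eta$. Now write $N=|V|$ and $m=|\mathcal{A}|\geq\gamma^{-1/4}$, and recall that all parts of $\mathcal{A}$ have size in $[a,2a]$ with $a=2^i\xi\gamma n/10$, so $N\geq ma$ and $\max_{U\in\mathcal{A}}|U|\leq 2N/m$. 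Split the non-edges of $H[V]$ by the distribution of their three vertices among the parts of $\mathcal{A}$: those inside one part number at most $\sum_{U\in\mathcal{A}}\binom{|U|}{3}\leq\frac16(\max|U|)^2N\leq\frac{2N^3}{3m^2}\leq\frac{2\sqrt\gamma}{3}N^3$; those with two vertices in one part and one in another number at most $\sum_{U\neq W}\gamma\binom{|U|}{2}|W|\leq\frac{\gamma}{2}N^3$; and those with all three vertices in distinct parts number at most $\eta\sum_{\{U,W,X\}}|U||W||X|\leq\frac{\eta}{6}N^3$. Since $N\to\infty$ with $n$, for $n$ large $\binom{N}{3}\geq\frac{N^3}{6}(1-\eta)$, hence
$$1-d(H[V])\leq\frac{\left(\frac{2\sqrt\gamma}{3}+\frac{\gamma}{2}+\frac{\eta}{6}\right)N^3}{\frac{N^3}{6}(1-\eta)}=\frac{4\sqrt\gamma+3\gamma+\eta}{1-\eta}=\frac{(\sqrt{2}+1)\eta+\frac{3}{8}\eta^2}{1-\eta}\leq 3\eta$$
for $n$ large, because $\sqrt2+1<3$; this is exactly $d(H[V])\geq 1-3\eta$.

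\emph{Main obstacle.} The one genuinely delicate point is the pairs step: one must carefully track how adjacency in $\mathcal{G}$ propagates through the successive splits, so as to be sure that a clique edge of $\mathcal{G}$ between two final parts really reflects the $\varepsilon$-homogeneity of the single ancestral split at which those two parts were first separated. The rest is bookkeeping, though the constants in the counting step have to be handled with some care to keep the total below $3\eta$.
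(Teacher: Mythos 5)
Your proposal is correct and follows essentially the same route as the paper's proof: you trace each clique edge of $\mathcal{G}$ back to the $\varepsilon$-homogeneous split that separated the two parts to get pair densities at least $1-\gamma$, upgrade to triple densities at least $1-\eta$ via Lemma \ref{lem:pairs_to_triple} and $D_2$-freeness, and then count non-edges of $H[V]$ by type. The only difference is cosmetic bookkeeping: the paper absorbs the two-in-one-part triples into the $1-\eta$ term and bounds the within-part triples by $\sum_X (|X|/|V|)^3 \leq 4\sqrt{\gamma} \leq 2\eta$, while you track the three classes separately with explicit constants; both yield $d(H[V]) \geq 1-3\eta$.
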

 	\begin{proof}
 		Fix any distinct $X,Y \in \mathcal{A}$. By construction, there is a step $1 \leq i \leq s$ such that $X \subseteq \mathcal{X}_i$ and $Y \subseteq \mathcal{Y}_i$. Also, since $\{X,Y\}$ is an edge of $\mathcal{G}$, it must be that $\{\mathcal{X}_i,\mathcal{Y}_i\}$ is an edge of $\mathcal{G}$ at step $i$. By definition, this means that 
 		$d(\mathcal{X}_i,\mathcal{X}_i,\mathcal{Y}_i), d(\mathcal{Y}_i,\mathcal{Y}_i,\mathcal{X}_i) \geq 1 - \varepsilon$. It follows that
 		$$
 		d(X,X,Y) \geq 1 - 
 		\frac{\varepsilon \binom{|\mathcal{X}_i|}{2}|\mathcal{Y}_i|}{\binom{|X|}{2}|Y|} \geq
 		1 -  
 		\frac{2\varepsilon |\mathcal{X}_i|^2|\mathcal{Y}_i|}{|X|^2|Y|} \geq
 		1 - 
 		\frac{2\varepsilon n^3}{(\xi\gamma n/10)^3} = 1 - \gamma,
 		$$
 		where the last equality uses \eqref{eq:EH_parameter_choice}. 
 		Similarly, $d(Y,Y,X) \geq 1 - \gamma$. 
 		
 		Now fix any distinct $X,Y,Z \in \mathcal{A}$. As $d(X,X,Y),d(X,X,Z) \geq 1 - \gamma = 1 - \frac{\eta^2}{8}$, we get from Lemma \ref{lem:pairs_to_triple} that $d(X,Y,Z) \geq 1 - \eta$. It now follows that 
		$$
		d(H[V]) \geq 1 - \eta
		- \sum_{X \in \mathcal{A}}{\frac{\binom{|X|}{3}}{\binom{|V|}{3}}} \; .
		$$
		Recall that for every $X \in \mathcal{A}$, we have $|X| \leq 2^{i+1}\xi\gamma n/10$. Combining this with \eqref{eq:EH_|V|}, we get 
		$$
		\sum_{X \in \mathcal{A}}{\frac{\binom{|X|}{3}}{\binom{|V|}{3}}} \leq 
		\sum_{X \in \mathcal{A}}{\left( \frac{|X|}{|V|} \right)^3} \leq 
		(2\gamma^{1/4})^{2} \cdot \sum_{X \in \mathcal{A}}{\frac{|X|}{|V|}} = 4\gamma^{1/2} \leq 2\eta.
		$$
		So we have $d(H[V]) \geq 1 - 3\eta$, as required. 
  	\end{proof}
  	By Claim \ref{claim:EH_almost_homogeneous} and Lemma \ref{lem:probabilistic_independent_set}, $H[V]$ contains a clique of size 
  	$\min\{ \frac{|V|}{2}, \, \Omega(\sqrt{1/\eta})\}$. As $|V| \geq \xi\gamma^{3/4}n/10 \geq n^{1/2}$, this gives a clique of size $\Omega(\sqrt{1/\eta})$, completing the proof. 
\end{proof}

    \section{Concluding remarks}
    In this paper we considered $3$-uniform hypergraphs which have no $4$ vertices spanning exactly $2$ edges. More generally, for integers $3 \leq k < \ell$ and a set $S \subseteq \{1,\dots,\binom{\ell}{k}\}$, one can study $k$-uniform hypergraphs which have no $\ell$ vertices which span exactly $s$ edges for some $s \in S$. What can be said about the Erd\H{o}s-Hajnal properties of such hypergraphs? Do polynomial Erd\H{o}s-Hajnal bounds hold in the case $\ell = k+1, S = \{2,3,\dots,k-1\}$ for every $k$?

	\paragraph{Acknowledgements} All authors were supported by the SNSF grant 200021\_196965. Istv\'an Tomon also acknowledges the support of Russian Government in the framework of MegaGrant no 075-15-2019-1926, and the support of MIPT Moscow.

\end{document}